\documentclass[a4paper,twoside,10pt]{article}
\usepackage[english]{babel}
\usepackage[latin1]{inputenc}
\usepackage{lmodern,color}
\usepackage[T1]{fontenc}
\usepackage{indentfirst}
\usepackage{amsmath,amssymb}

\usepackage[a4paper,top=3cm,bottom=3cm,left=3cm,right=3cm,%
bindingoffset=5mm]{geometry}
\usepackage{lmodern}
\usepackage[T1]{fontenc}
\usepackage{lettrine}
\usepackage{booktabs}
\usepackage{authblk}
\usepackage{emp}
\usepackage{amsthm}
\usepackage{amsmath,amssymb,amsthm}
\usepackage{braket}
\usepackage{chngpage}
\usepackage{cases}
\usepackage{graphicx}
\usepackage{epstopdf}
\usepackage{tabularx}
\usepackage{multirow}

\newcommand{\numberset}{\mathbb}

\newcommand{\R}{\numberset{R}}

\newcommand{\B}{\numberset{B}}
\newcommand{\Pk}{\numberset{P}}
\renewcommand{\epsilon}{\varepsilon}
\renewcommand{\theta}{\vartheta}
\renewcommand{\rho}{\varrho}
\renewcommand{\phi}{\varphi}
\newcommand{\nn}{\boldsymbol{n}}
\newcommand{\ttt}{{\boldsymbol{t}}}
\newcommand{\uu}{\boldsymbol{u}}
\newcommand{\vv}{\boldsymbol{v}}
\newcommand{\ww}{\boldsymbol{w}}
\newcommand{\ff}{\boldsymbol{f}}

\newcommand{\dd}{{\rm div}}
\newcommand{\DD}{\boldsymbol{\rm div}}
\newcommand{\gr}{\nabla}
\newcommand{\Gr}{\boldsymbol{\nabla}}

\newcommand{\VV}{\boldsymbol{X}}
\def\VVG{\boldsymbol{X}_{\Gamma}}

\newcommand{\oo}{\boldsymbol{0}}

\def\P0{{\Pi_1^{0, E}}}
\def\PN{{\Pi_1^{\Gr^s, E}}}
\def\PP0{{\boldsymbol{\Pi}_0^{0, E}}}

\newcommand{\D}{{\cal D}}
\newcommand{\xv}{\boldsymbol{x}}
\newcommand{\ov}{{\boldsymbol{o}}}
\newcommand{\pv}{{\boldsymbol{p}}}
\newcommand{\T}{\boldsymbol{T}(\boldsymbol{u},p)}
\newcommand{\Tz}{\boldsymbol{T}(\boldsymbol{z},q)}
\newcommand{\Tup}{\boldsymbol{T}(\boldsymbol{u}',p')}
\newcommand{\Tzp}{\boldsymbol{T}(\boldsymbol{z}',q')}
\newcommand{\e}{\boldsymbol{e}}
\newcommand{\OG}{\Omega}
\newcommand{\Fi}{\boldsymbol{\Phi}}
\newcommand{\zz}{\boldsymbol{z}}
\newcommand{\VVe}{\boldsymbol{V}}
\newcommand{\Ve}{{V}}

\newcommand{\Ph}{{\cal P}_h(\D)}
\newcommand{\PhG}{{\cal P}_{h,\Gamma}(\Omega)}

\def\EG{\boldsymbol{\epsilon}_{\Gamma}}

\DeclareMathOperator*{\infimum}{inf\phantom{p}\!\!\!}
\newcommand{\infsup}[2]{\infimum_{#1}\sup_{#2}}


\def\VVGh{\VV_{h,\Gamma}}


\newcommand{\jump}[1]{\lbrack\!\lbrack\,#1\,\rbrack\!\rbrack}

\usepackage{graphicx}
\usepackage{booktabs}
\usepackage{tabularx}
\usepackage{multirow}
\usepackage{enumerate}
\usepackage{color}

\usepackage[percent]{overpic}
\usepackage{ulem}

\newtheorem{remark}{Remark}[section]
\newtheorem{proposition}{Proposition}[section]
\newtheorem{corollary}{Corollary}[section]
\newtheorem{lemma}{Lemma}[section]
\newtheorem{theorem}{Theorem}[section]

\begin{document}

\author[1,4]{L. Beir\~ao da Veiga \thanks{lourenco.beirao@unimib.it}}

\author[2]{C. Canuto \thanks{claudio.canuto@polito.it}}

\author[3]{R. H. Nochetto \thanks{rhn@math.umd.edu}}

\author[1]{G. Vacca \thanks{giuseppe.vacca@unimib.it}}


\affil[1]{Dipartimento di Matematica e Applicazioni,  Universit\`a degli Studi di Milano Bicocca, Via R. Cozzi 55 - 20125 Milano, Italy}  

\affil[2]{Dipartimento di Scienze Matematiche G.L. Lagrange,  Politecnico di Torino, Corso Duca degli Abruzzi 24 - 10129 Torino, Italy} 

\affil[3]{Department of Mathematics and Institute for Physical Science and Technology, University of Maryland, College Park - 20742, MD, USA} 

\affil[4]{IMATI-CNR, Via Ferrata 1, I-27100 Pavia, Italy}

\title{Equilibrium analysis of an immersed rigid leaflet \\ by the virtual element method}
\date{}

\maketitle

\begin{abstract}
We study, both theoretically and numerically, the equilibrium of a hinged rigid leaflet with an attached rotational spring, immersed in a stationary incompressible fluid within a rigid channel. 
Through a careful investigation of the properties of the functional describing the angular momentum exerted by the fluid on the leaflet (which depends on both the leaflet angular position and its thickness), we identify sufficient conditions on the spring stiffness function for the existence (and uniqueness) of equilibrium positions. 
We propose a numerical technique that exploits the mesh flexibility of the Virtual Element Method (VEM). A (polygonal) computational mesh is generated by cutting a fixed background grid with the leaflet geometry, and the problem is then solved with stable VEM Stokes elements of degrees $1$ and $2$ combined with a bisection algorithm. We present a large array of numerical experiments to document the accuracy and robustness with respect to degenerate geometry of the proposed methodology.
\end{abstract}



\section{Introduction}
\label{sec:into}

The Virtual Element Method (VEM) is a recent numerical technology introduced in \cite{volley,autostop} for the discretization of problems governed by partial differential equations. It can be regarded as a generalization of the Finite Element Method (FEM) to meshes of general polytopes. Since its inception, the VEM enjoyed a wide success in the mathematics and engineering communities, because of its flexibility and robustness with respect to mesh design and handling. 
To cite some applicative example, VEM allows for immediate gluing of independent planar meshes in discrete fracture network simulation \cite{Berrone,Benedetto,Keilegaven}, adding nodes to ease the enforcement of contact conditions in solid mechanics \cite{wriggers}, breaking of existing elements for crack propagation problems \cite{Aldakheel,Artioli-cr,Benedetto-cr,cinesi-cr}, and reduction of directional mesh bias in topology optimization \cite{Paulino-topopt,Bruggi-Verani}.

The class of fluid-structure and immersed boundary problems is practically relevant and offers attractive possibilities and challenges to VEM and general polytopal meshes due to the interaction of different, perhaps deforming, domains\cite{DeHart}. 
A very short list of representative papers, restricting the attention to the case of interaction with a rigid body, is \cite{fsi-rig-1,fsi-rig-2,fsi-rig-3}.
One could think, for instance, of using a fixed background grid for the fluid domain (Eulerian description), which is cut by a deforming solid at each time instant or iterative procedure step (Lagrangian description).
Clearly, arbitrary mesh cuttings may generate polygonal elements of very bad quality (in terms of element anisotropy, possible non convexity, neighbor size ratio, etc.) and thus the numerical scheme must be reliable also in the presence of such hazards. We refer to \cite{Antonietti-FSI} for an application in the realm of polygonal DG.
The present paper represents a first VEM study in this setting.
%
\begin{figure}[!h]
\center
{
\begin{overpic}[scale=0.33]{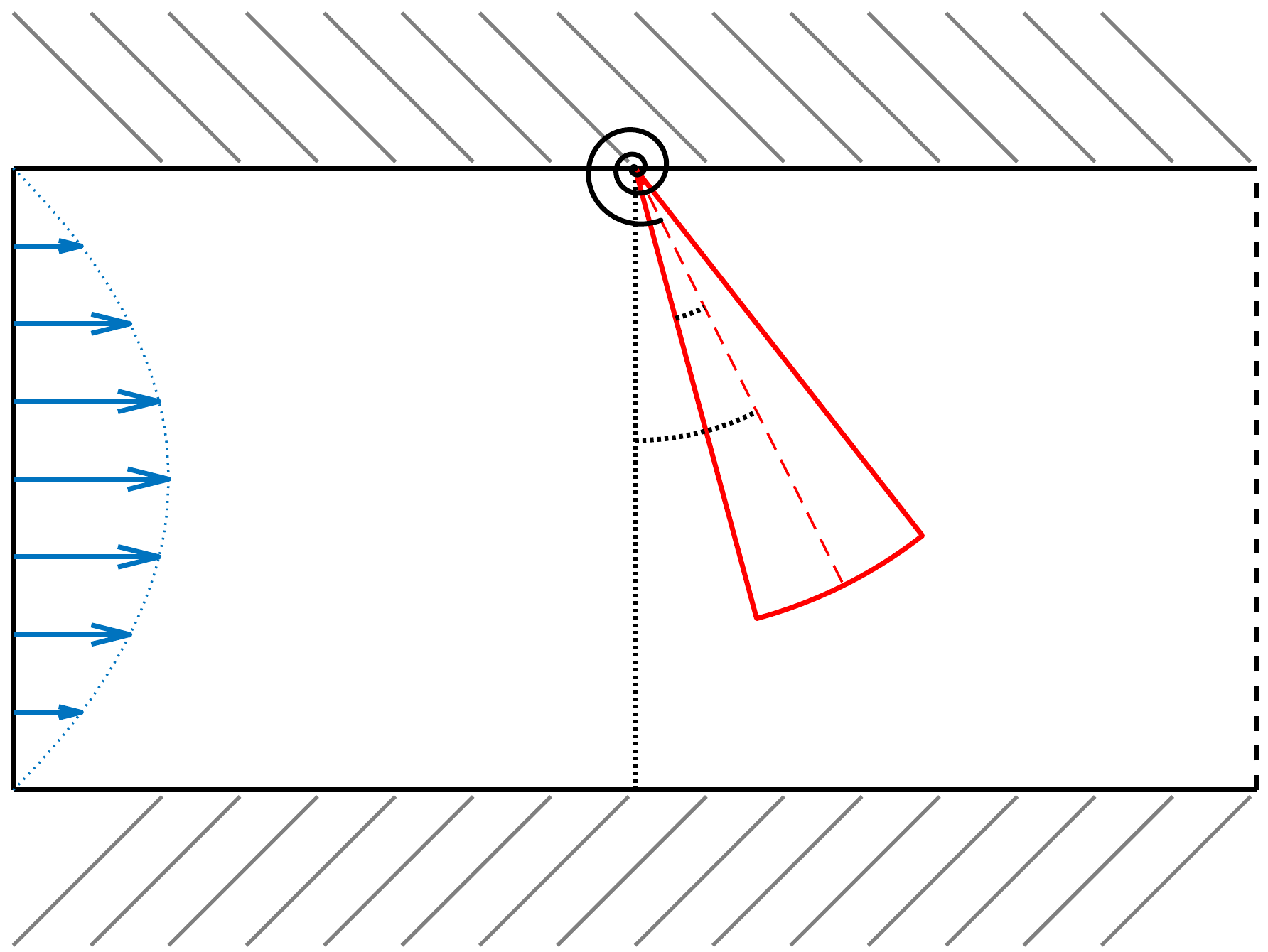} 
\put (68,40) {\Large{$\Gamma$}}
\put (54,34) {\Large{$\theta$}}
\put (55,46) {\large{$\epsilon$}}
\end{overpic}
\hfill
\begin{overpic}[scale=0.33]{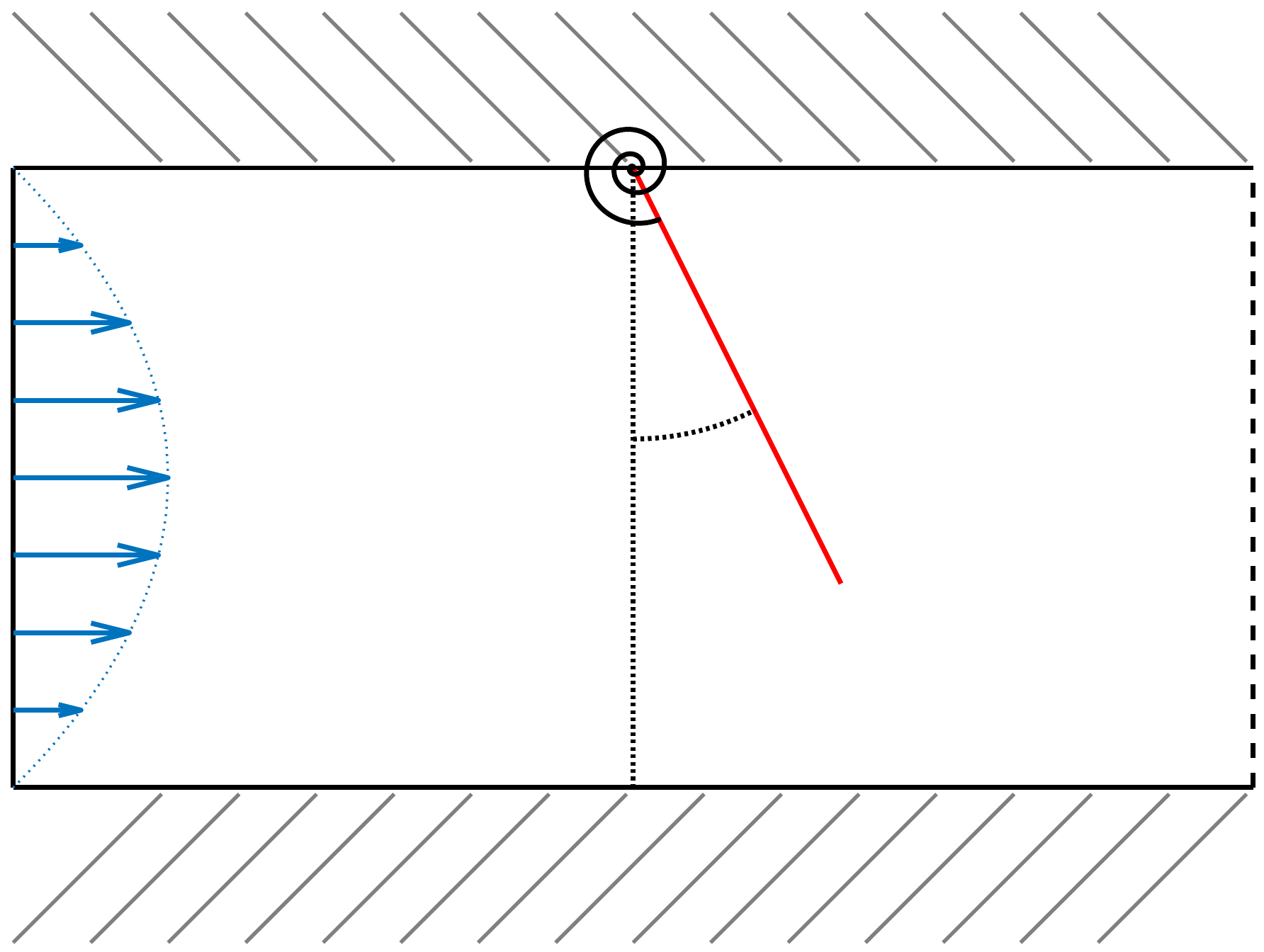} 
\put (65.5,30.5) {\Large{$\Gamma$}}
\put (54,34) {\Large{$\theta$}}
\end{overpic}
  \caption{Model problem: hinged rigid leaflet of thickness $\epsilon$ with a rotational spring attached immersed in a stationary incompressible fluid within a rigid channel. (a) fat leaflet ($\epsilon>0$), (b) thin leaflet ($\epsilon=0$).}
  \label{F:leaflet-model}
}
\end{figure}
In order to design a VEM for an immersed boundary problem and study its robustness and accuracy, we propose a ``deceivingly simple'' 2D model problem inspired from the FEM analysis in \cite{Auricchio}. The problem is that of a hinged rigid structure (a leaflet) of thickness $\epsilon$ with a rotational spring attached, immersed in a stationary incompressible fluid within a rigid channel; all data are constant in time. We consider two extreme cases: the {\it fat leaflet} ($\epsilon>0$), which is the most physically realistic case, and the {\it thin leaflet} ($\epsilon=0$), which is the asymptotic limit of the former; see Figure \ref{F:leaflet-model} for a cartoon geometry of the problem.
In order to prevent interactions between the leaflet and the rigid upper wall, which in turn are not physically relevant and would lead to additional singularities, we introduce a positive parameter $\epsilon_0>0$ and assume $\epsilon \le \epsilon_0/2$. We then investigate our problem for the admissible angle range $\theta \in I_{\epsilon_0}$ where
\begin{equation}\label{eq:Iepsilon0}
I_{\epsilon_0} := [-\tfrac\pi2+\epsilon_0,\tfrac\pi2-\epsilon_0].
\end{equation}
The equilibrium position of the leaflet corresponds to a balance between the angular momentum $-\kappa(\theta)$ exerted by the rotational spring on the leaflet and the functional 
$$
\tau \ : \ I_{\epsilon_0} \rightarrow {\mathbb R}
$$
describing the torque exerted by the fluid on the leaflet as a function of its angular position $\theta$ relative to the vertical axis; hence the torque balance reads $\tau(\theta)=\kappa(\theta)$.

This problem is nonlinear because $\tau$ depends on $\theta$ in an intricate nonlinear fashion, in fact one that we decipher in this paper. Its numerical approximation requires nonlinear iterations and thus entails solving the fluidodynamics problem for several arbitrary leaflet positions.

This leads to various fundamental issues, both theoretical and computational, that have to be resolved to get a reliable and accurate numerical method. We describe them along with our contributions below.

\begin{enumerate}[$\bullet$]

\item {\it Structure of $\tau(\theta)$}. If $(r,\omega)$ are polar coordinates relative to the hinge, $\T$ is the Cauchy tensor of the fluid, written in terms of the velocity-pressure pair $(\uu,p)$, $\Gamma=\Gamma(\theta)$ is the boundary of the leaflet, and $\nn_\Gamma$ is the outer unit normal of $\Gamma$, then the force per unit of length acting on $\Gamma$ due to the leaflet is given by $\T\nn_\Gamma$. Consequently, the torque $\tau(\theta)$ exerted by the fluid on $\Gamma$ reads
\begin{equation}\label{eq:tau}
\tau(\theta) = - \int_\Gamma r \, \e_\omega^\perp \cdot \T \, \nn_\Gamma
\end{equation}
where $\e_\omega^\perp=(\cos\omega,\sin\omega)$. Since the function $\tau(\theta)$ is nonlinear and nonlocal, because $(\uu,p)$ depends on $\Gamma$, no explicit expression is available. To study the behavior of $\tau$, namely show differentiability for $\epsilon>0$ and uniform continuity for $\epsilon=0$, we resort to shape differential calculus \cite{DelfourZolesio,SokolowskiZolesio}. However, this is not straightforward because $\Gamma$ is not smooth, especially for $\epsilon=0$. We rewrite \eqref{eq:tau} as a sum of bulk integrals and compute its shape gradient, thereby avoiding dealing with curvature of $\Gamma$ which is not well defined. This is possible for $\epsilon>0$ but the underlying regularity becomes borderline for $\epsilon=0$ and we content ourselves with continuity of $\tau(\theta)$. To accomplish this program, we represent \eqref{eq:tau} variationally and use a duality argument involving an adjoint fluid system with solution $(\zz,q)$. The representation \eqref{eq:expr2-J} below of \eqref{eq:tau} is amenable to shape differentiation: we deform $\Gamma$ rigidly preserving both its shape and fluid incompressibility. We carry out this study in Section \ref{sect:shape} and obtain explicit expressions of the derivative $\tau_\epsilon'(\theta)$ in terms of $(\uu,p)$, $(\zz,q)$ and data for $\epsilon>0$. Moreover, we prove that $\tau_\epsilon$ converges uniformly as $\epsilon\to0$ and gives rise to a continuous torque for the thin leaflet. Existence (and uniqueness) of the nonlinear equilibrium equation
\begin{equation}\label{eq:equilibrium}
  \tau(\theta)=\kappa(\theta)
\end{equation}
follow upon making suitable assumptions on the spring angular momentum $\kappa$.

\medskip
\item {\it VEM discretization}.
  The iterative solution of \eqref{eq:equilibrium} requires solving the fluidodynamics problem for different and arbitrary positions of the leaflet $\Gamma$. We consider the thin leaflet $\Gamma$, i.e. $\epsilon=0$, and let $\Gamma$ cut through a background uniform grid of quadrilaterals. When the tip of $\Gamma$ falls within an element $E$, we extend $\Gamma$ with a straight line until it hits the boundary of $E$; this procedure is more accurate than dealing with the tip within $E$, which is also a viable option in the VEM context. The resulting mesh is thus geometrically conforming to $\Gamma$ but at the expense of having sometimes polygons with extremely degenerate shapes depending on the angle $\theta$: highly anisotropic elements, elements with edges that are orders of magnitude smaller than its diameter, and elements that are orders of magnitude smaller than their neighbors. The former are typical of small $\theta$'s whereas the latter typically occur for intermediate $\theta$'s. Moreover, near certain critical $\theta$'s, even small variations of the leaflet position may yield abrupt topological changes in the mesh due to the extension procedure. We exploit the capabilities of VEM to handle arbitrary polygonal elements seamlessly. We adopt the divergence-free VEM of degree $k=1,2$ for the Stokes fluid \cite{Antonietti-BeiraodaVeiga-Mora-Verani:2014,Stokes:divfree,Vacca:2018,NavierStokes:divfree}. We investigate the approximation properties of the ensuing discrete torque $\tau_h(\theta)$ and prove a quasi-optimal error estimate relative to $\tau(\theta)$, uniform in $\theta$.

\medskip
\item {\it Computational study}.
We develop a series of numerical tests to assess and document the performance of the VEM methodology in the setting of an immersed rigid boundary. We illustrate the effect of degenerate elements in the inf-sup constant and conditioning of the system for a wide range of angles $\theta$. Geometric degeneracy is usually associated with manageable spikes in both quantities. We perform a study of the role of the stabilization term of VEM. It turns out that the effect on $\tau_h(\theta)$ of abrupt topological changes of the mesh is much more pronounced for the so-called ``dofi-dofi'' stabilization form \cite{volley} than for the ``trace'' stabilization \cite{wriggers} form. The former is, however, generally more accurate than the latter. We examine this unexpected discovery in great length and present several experiments whose main parameters are the angle $\theta$ and the mesh size $h$. From the practical perspective, we conclude that, although there is some influence of the mesh quality on the results, the scheme is sufficiently robust and reliable. Considering the simplicity, and thus the efficiency, of the mesh cutting procedure when compared with other techniques, we believe our approach is viable. 
  
\end{enumerate}

The paper is organized as follows. In Section \ref{sec:problem} we present the model problem and its adjoint along with their variational formulation. In Section \ref{sect:shape} we develop the theoretical analysis of the continuous problem, namely prove properties of $\tau(\theta)$ for $\epsilon>0$ and $\epsilon=0$ that are uniform in $\theta$. In Section \ref{sec:VEMdis} we briefly review the VEM method of \cite{NavierStokes:divfree} and describe the discrete torque functional $\tau_h$, the mesh cutting procedure, and the adopted iterative scheme. We also derive an error estimate for $\tau-\tau_h$. Finally, in Section \ref{sec:tests} we document the performance and accuracy of the proposed scheme relative to degenerate elements and abrupt topological mesh transitions. Moreover, we report on variations of the inf-sup constant and condition number with respect to the angle $\theta$ and discuss robustness.

\section{Problem definition and governing equations}
\label{sec:problem}

The focus of the present study is to analyse the problem of a hinged thin rigid structure $L$ (a leaflet) with a rotational spring attached, immersed in a fluid within a rigid channel.
We assume invariance in the transversal direction $z$, hence we can adopt a 2D model in the $xy$ plane. Furthermore, we consider the stationary case, that is all problem data are independent of time, and we search for the equilibrium position of the leaflet as well as the corresponding fluid velocity and pressure. 

We assume that the channel is represented by a rectangle $\D$ aligned with the coordinate axes, with the upper and lower edges corresponding to the rigid walls. The leaflet is hinged at a point $\ov \in \partial\D$ sitting on the upper wall of the channel, as depicted in Fig. \ref{F:leaflet-model}. Introducing a system of polar coordinates $(r,\omega)$ centered at $\ov$ with principal ray $\omega=0$ placed vertically and oriented downward, we let
\begin{equation}\label{eq: eomega}
\e_\omega := (\sin \omega, -\cos \omega),
\end{equation}
and note that a generic point $\xv=(x,y)\in\D$ reads $\xv = \ov + r \e_\omega$. We assume that $L$ is axi-symmetric with respect to some axis passing through $\ov$; let $\theta \in I_{\epsilon_0}$ be the angular coordinate of such axis of symmetry, where $I_{\epsilon_0}$ is defined in \eqref{eq:Iepsilon0}. Thus, the position of $L$ is identified by the value of $\theta$. We denote by $\Gamma = \partial L$ the boundary of the leaflet, and by $\nn=\nn_{\Gamma}$  the unit normal vector to $\Gamma$ pointing inside $L$. The region occupied by the fluid (i.e., the computational domain) is $\Omega:= \D \! \setminus \! L$, whose boundary is $\partial\Omega = \partial \D \cup \Gamma$; the fluid-structure interaction takes place on $\Gamma$. 

While the formulation of the problem will be given for a generic leaflet, we will further develop our analysis for leaflets $L$ of the form
\begin{equation}\label{eq: leaflet}
  L := \{ \xv = \ov + r \e_\omega: ~ 0 \le r \le R, \ |\omega- \theta| \le \epsilon \},
\end{equation} 
for some $R$ smaller than the vertical size of the channel, and some $\epsilon$ satisfying $0 \le \epsilon \le \epsilon_0/2$ small enough.
We further assume that the channel length is sufficient to guarantee that the distance among $L$ and the vertical sides of ${\cal D}$ is (uniformly) positive for all configurations.
Note that we do allow the limit case $\epsilon = 0$, when the 2D leaflet degenerates into a 1D segment or, equivalently, when $\Gamma=L$. 
Fig. \ref{F:leaflet-notations} displays the two cases, with the associated notation.
When needed, we will append the suffix $\epsilon$ to symbols, to stress their dependence upon $\epsilon$ (e.g., $\Omega_\epsilon$, $\Gamma_\epsilon$, ...).

\begin{figure}[!h]
\center
{
\begin{overpic}[scale=0.35]{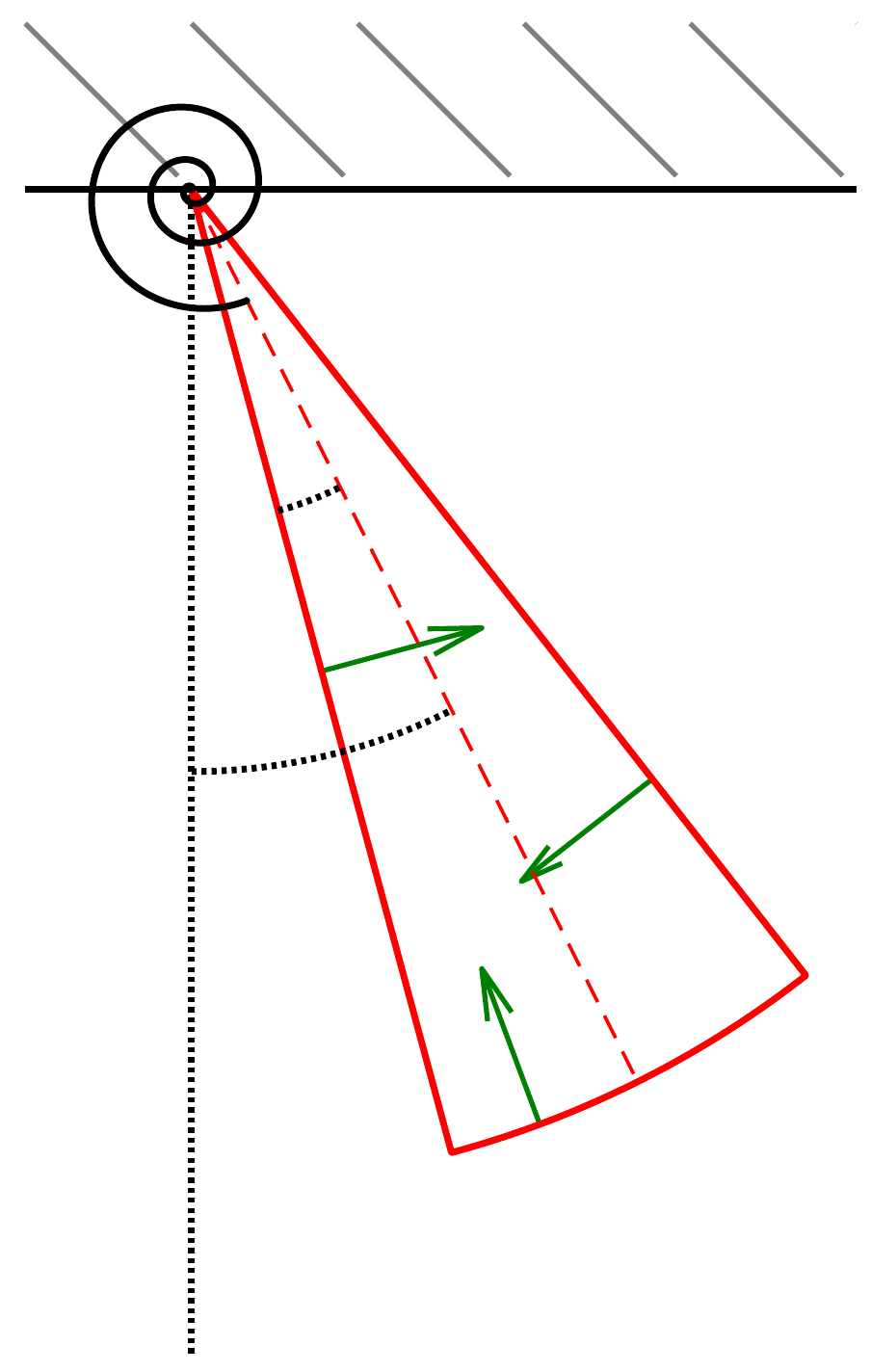} 
\put (10,94) {\Large{$\ov$}}
\put (43,25) {\Huge{$L$}}
\put (43,50) {\LARGE{$\Gamma$}}
\put (3,75) {\Large{$\kappa$}}
\put (22,36) {\LARGE{$\theta$}}
\put (36,12) {{$\boldsymbol{n}_{\Gamma}$}}
\put (22,57) {\Large{$\epsilon$}}
\end{overpic}
\qquad \qquad  \qquad \qquad
\begin{overpic}[scale=0.35]{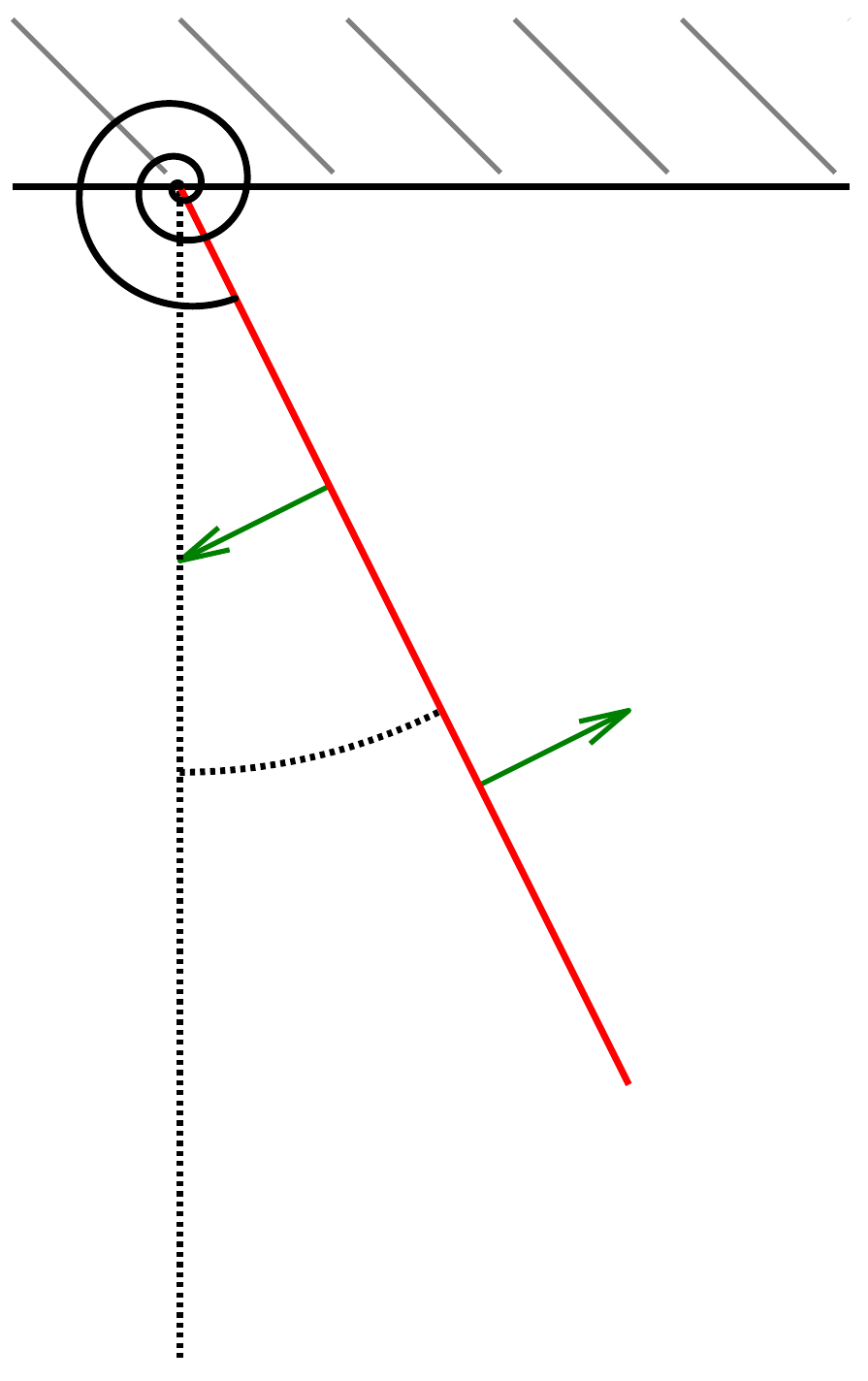} 
\put (10,94) {\Large{$\ov$}}
\put (43,30) {\LARGE{$\Gamma$}}
\put (3,75) {\Large{$\kappa$}}
\put (22,36) {\LARGE{$\theta$}}
\put (40,50) {{$\boldsymbol{n}_{\Gamma}$}}
\end{overpic}
\caption{Notations: hinged rigid leaflet of thickness $\epsilon$ with a rotational spring attached immersed in a stationary incompressible fluid within a rigid channel. (a) fat leaflet ($\epsilon>0$), (b) thin leaflet ($\epsilon=0$).}
  \label{F:leaflet-notations}
}  
\end{figure}

\medskip
In order to analyse the problem described above we need to define

\smallskip
\begin{itemize}
\item the equations governing the fluid motion,
\item the equilibrium equation of the leaflet,
\item the coupling equation  between the fluid and structure at the interface $\Gamma$.
\end{itemize}
%
%
\subsection{Fluid equations}
In our model system the fluid is assumed to be incompressible and Newtonian (i.e., having constant viscosity $\nu$), therefore the fluid motion is described by the classical incompressible Navier-Stokes equation
\begin{equation}
\label{eq:ns_fluido_primale1}
\left\{
\begin{aligned}
& -  \DD \, \T + (\Gr \uu ) \,\uu  = \ff \qquad  & &\text{in $\Omega$,} \\
& \ \dd \, \uu = 0 \qquad & &\text{in $\Omega$,} 
\end{aligned}
\right.
\end{equation}
with boundary conditions
\begin{equation}
\label{eq:ns_fluido_primale2}
\left\{
\begin{aligned}
& \uu = \boldsymbol{g}                      & &\text{on $\partial \D_D$,} \\
& \T \, \nn = \boldsymbol{h}   & &\text{on $\partial \D_N$,} \\
& \uu = \oo   & &\text{on $\Gamma$,} 
\end{aligned}
\right.
\end{equation}
where $\uu$, $p$ are the velocity and pressure fields, respectively, and $\T = \nu \Gr^s \uu +p \boldsymbol{I}$ is the Cauchy stress tensor.  
Here, $\DD$ and $\Gr$ (resp. $\dd$ and $\gr$) denote the divergence and gradient operator for vector fields (resp. for scalar functions), and $\Gr^s$ is the symmetric gradient.
Finally, $\nn$ designates the outward unit normal to $\partial \Omega$, while $\ff$ represents  the external force and $\boldsymbol{h}$ the outflow traction.
For what concerns the conditions in \eqref{eq:ns_fluido_primale2} prescribed on the boundary of the channel $\D$, we consider a partition of $\partial \D$ into two disjoints subsets  $\partial \D_D$, $\partial \D_N$ such that
$\partial \D = \partial \D_D \cup \partial \D_N$.
We assume that $\partial \D_N$ is not empty, so that the pressure is uniquely defined. 
In our model problem, we may consider that at the top and bottom wall of the channel (see Fig. \ref{F:leaflet-model})  no-slip boundary conditions are applied (i.e. $\bf g = \boldsymbol{0}$ therein). We denote by $\partial\D_\text{in} \!:= \!\textrm{supp}({\bf g})$ the ``inflow'' part of the boundary (since we have in mind that ${\bf g}\cdot {\bf n} <0$ therein, although this condition is not needed from a mathematical point of view); correspondingly,
we call $\partial \D_N = \partial\D_\text{out}$ the ``outflow'' part of the boundary.
We also remark that different boundary conditions on $\partial \D_N$ can be treated as well. 

\subsection{Structure and equilibrium equations}
Let us denote by $\kappa=\kappa(\theta)$ the angular momentum xerted by the leaflet on the rotational spring at angle $\theta$. Recalling the definiton \eqref{eq:Iepsilon0} of $I_{\epsilon_0}$, we assume that
\begin{equation}\label{eq:def-kappa}
\kappa : I_{\epsilon_0} \, \to \, \mathbb{R} \cup {\pm\infty}
\end{equation}
is a continuous, non-decreasing, possibly nonlinear function. It vanishes at some rest position $\theta=\theta_0$, in a neighborhood of which the spring response is supposed to be linear $\kappa(\theta)=\kappa_s (\theta-\theta_0)$, for some constant elastic modulus $\kappa_s$. Note that $\theta>\theta_0$ yields a clockwise torque $-\kappa(\theta)<0$ exerted by the spring on the leaflet.

The equilibrium of the leaflet is expressed by the momentum balance equation
\begin{equation}
\label{eq:leaflet_primale}
\kappa (\theta) = \tau(\theta) \,,
\end{equation}
where $\tau(\theta)$ represents the torque (or total angular momentum) with respect to the point $\ov$ exerted by the fluid on the structure $L$.
In order to express it, let us consider any point $\xv = \ov + r \e_\omega \in \Gamma$ having polar coordinates $(r,\omega)$ with respect to the hinge $\ov$, where $\e_\omega$ is defined in \eqref{eq: eomega}. We note that the unit vector $\e_\omega^\perp := (\cos \omega, \sin \omega)$, orthogonal to $\e_\omega$ and oriented counterclockwise, reads
$$
\e_\omega^\perp  = \begin{cases} \phantom{+} \nn_\Gamma & \text{for } \omega=\theta - \epsilon \,,\\
- \nn_\Gamma & \text{for } \omega=\theta+ \epsilon\,.
\end{cases}
$$
Then, the angular momentum $m=m(\xv)$ per unit of length of the force $\T \, \nn_\Gamma$ exerted by the leaflet $\Gamma$ on the fluid at $\xv\in\Gamma$ relative to $\ov$  is given by
$$
m = r \, \e_\omega^\perp \cdot \T \, \nn_\Gamma\,.
$$
Consequently, the torque $\tau(\theta)$ exerted by the fluid on $\Gamma$ is given by 
\begin{equation}
\label{eq:torque}
\tau(\theta) = -\int_{\Gamma} r \, \e_\omega^\perp \cdot \T \, \nn_\Gamma\,.
\end{equation}
Note that in the limit case $\epsilon\to0$, in which the leaflet $L=\Gamma$ is just a segment, formally one has
\begin{equation}\label{eq:torque-jump}
\tau(\theta) = -\int_{\Gamma} r \, \e_\omega^\perp \cdot \jump{ \T \, \nn_\Gamma}\,,
\end{equation}
where $\jump{\cdot}$ denotes the jump operator across the interface $\Gamma$. This formal limit will be justified later on.

In conclusion, view of \eqref{eq:leaflet_primale} and \eqref{eq:torque}, the angular momentum balance reads
\begin{equation}
\label{eq:balance}
\kappa (\theta) + \int_{\Gamma} r \, \e_\omega^\perp \cdot \T \, \nn_\Gamma\, = 0.
\end{equation}

\subsection{Variational formulation}
%
We are now ready to describe the system of equations for our model problem. 
Collecting the fluid motion equations \eqref{eq:ns_fluido_primale1}, the boundary conditions \eqref{eq:ns_fluido_primale2},
and the balance equation \eqref{eq:balance},
the strong formulation of the coupled problem reads
as follows: find $(\uu,\, p, \, \theta)$ such that
\begin{equation}
\label{eq:fsi_primale}
\left\{
\begin{aligned}
& -  \DD \, \T + (\Gr \uu ) \,\uu  = \ff\qquad  & \qquad &\text{in $\Omega$,} \\
& \dd \, \uu = 0 \qquad & &\text{in $\Omega$,} \\
& \uu = \boldsymbol{g}                         & \qquad &\text{in $\partial \D_D$,} \\
& \T \, \nn = \boldsymbol{h}  & \qquad &\text{in $\partial \D_N$,} \\
& \uu = \boldsymbol{0}                           &  \qquad &\text{on $\Gamma$,} \\
& \kappa (\theta) + \int_\Gamma r \, \e_\omega^\perp \cdot \T \, \nn_\Gamma\, = 0.
\end{aligned}
\right.
\end{equation}
We emphasize that the number of boundary conditioms on $\Gamma$ (last two lines of \eqref{eq:fsi_primale}) is overdetermined. This is typical of free boundary problems and accounts for the fact that the angular position $\theta$ of the leaflet is unknown.

The next step in the description of our fluid-structure interaction model problem 
is to introduce a suitable variational formulation of system \eqref{eq:fsi_primale}.
In particular we need to define a weak form of system \eqref{eq:fsi_primale} fitting the virtual element discretization that will be described in Section \ref{sec:VEMdis}.
We start by introducing the following Sobolev spaces for vector fields (i.e., the velocity spaces): 
\begin{equation}
\label{eq:spazi continui}
\VV := \left[ H^1(\OG) \right]^2\;, \qquad   
\VVG^{\boldsymbol{g}} : = \left\{ \vv \in \VV: \quad 
\vv_{|_{\partial \D_D}} = \boldsymbol{g}\,, \quad  \vv_{|_{\Gamma}} = \boldsymbol{0}  \right\}  \,,
\end{equation}
where $\boldsymbol{g}\in [H^{1/2}(\partial \D_D)]^2$. For pressures we consider the space $Q := L^2(\OG)$. 
These  spaces are endowed with the natural norms
\begin{equation}
\label{eq:norme continue}
\| \vv \|_{\VV} := \| \vv \|_{\left[ H^1(\Omega) \right]^2} \quad , \qquad 
\|q\|_Q := \| q\|_{L^2(\Omega)}. 
\end{equation}
Let us now define the following multi-linear forms 
\begin{align}
\label{eq:forma a}
&a(\cdot, \cdot) \colon \VV \times \VV \to \R,    
&\,
&a (\uu, \, \vv) := 
\int_{\Omega}  \, \Gr^s \uu : \Gr^s  \vv \,{\rm d} \Omega,
\\
\label{eq:forma b}
&b(\cdot, \cdot) \colon \VV \times Q \to \R,
&\, 
&b(\boldsymbol{v}, q) :=   \int_{\Omega}q\, {\rm div} \,\boldsymbol{v} \,{\rm d}\Omega,
\\
\label{eq:forma c}
&c(\cdot; \, \cdot, \cdot) \colon \VV \times \VV \times \VV \to \R, 
&
&c(\ww; \, \uu, \vv) :=  \int_{\Omega} ( \Gr \uu ) \, \ww \cdot \vv  \,{\rm d}\Omega,
\end{align}
for all $\uu, \vv, \ww \in \VV$ and $q \in Q$.
Furthermore, we assume that $\ff \in [L^2(\D)]^2$ and $\boldsymbol{h} \in [L^2(\partial\D_N)]^2$, and we denote by $(\cdot , \cdot)_{0,\Omega}$ and $(\cdot , \cdot)_{0,\partial\D_N}$ the $L^2$-inner products on $\OG$ and $\partial\D_N$, respectively.
We also denote by $\EG \in \VV$ the harmonic extension in $\Omega$ of the function defined on the skeleton by 
\begin{equation}
\label{eq:EG}
\EG :=
\left \{
\begin{aligned}
& r \, \e_\omega^\perp & \qquad  & \text{on $\Gamma$,}
\\
& \boldsymbol{0}        & \qquad  & \text{on $\partial \D$.}
\end{aligned}  
\right . 
\end{equation}

Among the various possible variational formulations of Problem \eqref{eq:fsi_primale}, we introduce the following one: find $\theta \in I_{\epsilon_0}$ 
and $(\uu, \, p) \in \VVG^{\boldsymbol{g}} \times Q$, such that
\begin{equation}
\label{eq:fsi_variazionale}
\left\{
\begin{aligned}
\nu \, a(\uu, \vv + \sigma \,\EG) &+
c(\uu; \, \uu, \vv + \sigma \, \EG) + 
b(\vv + \sigma \, \EG, p)
+ \sigma \, \kappa (\theta)
\\
& =(\ff, \vv + \sigma \, \EG)_{0,\Omega} 
+ (\boldsymbol{h}, \vv)_{0,\partial\D_N},  & &
\\
 b(\uu, q) &= 0, & &
\end{aligned}
\right. 
\end{equation}
for all $\vv \in \VVG^{\boldsymbol{0}}$, $\sigma \in \R$, and $q \in Q$.
It is straightforward to see that, taking $\sigma = 0$ in 
\eqref{eq:fsi_variazionale} we obtain the weak form of the Navier-Stokes equation (coupled with the boundary conditions on $\partial \Omega$) associated with the strong formulation in \eqref{eq:fsi_primale}.
On the other hand, taking $\vv = \boldsymbol{0}, \: \sigma=1$ in \eqref{eq:fsi_variazionale} we get
\begin{equation}\label{eq:weak-torque}
\nu \, a(\uu, \EG) +
c(\uu; \, \uu,  \EG) + 
b(\EG, p) - (\ff, \EG)_{0,\Omega} + \kappa (\theta)  = 0\,,
\end{equation}
which gives \eqref{eq:balance} after integration by parts. Expression \eqref{eq:weak-torque} is numerically better than \eqref{eq:balance} because it avoids evaluating explicitly the trace of $\T$ on $\Gamma$.
%

\section{Torque as a function of geometry}
\label{sect:shape}

In order to assess the solvability of Problem \eqref{eq:fsi_variazionale}, we aim at deriving suitable properties of the torque functional $\tau(\theta)$ introduced in \eqref{eq:torque} and \eqref{eq:torque-jump}, as a function of the angle $\theta$. To keep the technical burden at a minimum, in this section we assume that the velocity is so small, that the convective effects may be neglected; in other words, we assume that $(\uu,p)$ satisfies the Stokes problem
\begin{equation}
\label{eq:Stokes}
\left\{
\begin{aligned}
-  \DD \, \T  &= \ff  & \quad &\text{in $\Omega$,} \\
  \dd \, \uu  &= 0  & &\text{in $\Omega$,}
\end{aligned}
\right.
\qquad
\left\{
\begin{aligned}
 \uu & = \boldsymbol{0}                           &  \quad &\text{on $\Gamma$,} \\
 \uu & = \boldsymbol{g}                         & \quad &\text{on $\partial \D_D$,} \\
 \T \, \nn & = \boldsymbol{h}  & \quad &\text{on $\partial \D_N$ , } \\
\end{aligned}
\right.
\end{equation}
that corresponds to eliminating the term $c(\cdot;\cdot,\cdot)$ in \eqref{eq:fsi_variazionale}.

Furthermore, we assume that the leaflet has the form given in \eqref{eq: leaflet} for some $R$ and {$0\le \epsilon\le\epsilon_0/2$}. Thus, the geometry of the fluid domain, hence the torque functional $\tau$, depends on the three parameters $\theta$, $\epsilon$ and $R$. We restrict $\theta$ to satisfy $\theta \in I_{\epsilon_0}$, i.e. $|\theta| \le \frac\pi2 - \epsilon_0$, in order to avoid the contact of the leaflet with the upper wall. For the analysis we have in mind, it is convenient to think the torque as a function of the boundary of the leaflet (which in turn depends on these parameters), i.e., we rephrase \eqref{eq:torque} as
\begin{equation}\label{eq:def-J}
\tau(\theta) = J[\Gamma] := -\int_{\Gamma} r \, \e_\omega^\perp \cdot \T \, \nn_\Gamma\,.
\end{equation}
For the moment, we consider $R$ and $\epsilon\ge0$ as fixed, and we just allow rigid changes in $\Gamma=\Gamma(\theta)$ produced by changes in $\theta$. To this end, it is convenient to rewrite $J[\Gamma]$ in terms of integrals in the bulk $\OG$ instead of $\Gamma$. This is useful for differentiation of $J$ with respect to shape, because it avoids the appearance of terms involving the curvature of $\Gamma$ which is not well defined at the tip of the leaflet $\Gamma$ for any $\epsilon\ge0$.

\subsection{Equivalent form of $J[\Gamma]$}\label{S:equiv-J}
%
The following derivation includes both cases $\epsilon>0$ and $\epsilon=0$. Let us define in $\D$ the vector field 
\begin{equation}\label{eq:def-Phi}
\Fi(r,\omega) := r \, \e_\omega^\perp \,, \qquad \forall r, \omega \,,
\end{equation}
which allows us to rewrite \eqref{eq:def-J} (or equivalently \eqref{eq:torque}) for $\epsilon>0$
\begin{equation}\label{eq:expr1-J}
J[\Gamma] = -\int_{\Gamma} \Fi \cdot \T \, \nn_\Gamma\,,
\end{equation}
and similarly \eqref{eq:torque-jump} for $\epsilon=0$.
Note that in cartesian coordinates one has $\Fi(x,y)=(y_\ov-y,x-x_\ov)$, where $(x_\ov,y_\ov)$ are the cartesian coordinates of the hinge $\ov$. Let $(\zz,q)$ be the solution of the {\sl adjoint problem}
\begin{equation}
\label{eq:adjoint-pb}
\left\{
\begin{aligned}
 -  \DD \, \Tz   &= \oo \quad  & &\text{in $\Omega$,} \\
   \dd \, \zz & = 0 \quad & &\text{in $\Omega$,}
\end{aligned}
\right.
\qquad
\left\{
\begin{aligned}
 \zz &= \Fi  \quad &  &\text{on $\Gamma$,} \\
 \zz &= \oo                      & &\text{on $\partial \D_D$,} \\
 \Tz \, \nn &= \oo   & &\text{on $\partial \D_N$.} 
\end{aligned}
\right.
\end{equation}
This, and the boundary-value problem \eqref{eq:Stokes} satisfied by $(\uu,p)$, allows us to express $J[\Gamma]$ in \eqref{eq:expr1-J} as follows:
\begin{eqnarray*}
J[\Gamma] &=& -\int_{\Gamma} \zz \cdot \T \, \nn_\Gamma = 
-\int_{\Omega} \DD ( \T \, \zz) + \int_{\partial\D_N} \zz \cdot \T \, \nn \\
&=& -\int_{\Omega} \DD \, \T \cdot \zz - \int_{\Omega} \T : \Gr \zz + \int_{\partial\D_N} \boldsymbol{h} \cdot \zz  \\
&=&  \int_{\Omega} \ff \cdot \zz - \nu  \int_{\Omega} \Gr^s \uu : \Gr^s \zz + \int_{\partial\D_N} \boldsymbol{h} \cdot \zz \,,
\end{eqnarray*}
because $p\boldsymbol{I} : \Gr \zz = p \, \dd \, \zz = 0$. Therefore, from now on we focus on the expression
\begin{equation}\label{eq:expr2-J}
J[\Gamma] = -\nu  \int_{\Omega} \Gr^s \uu : \Gr^s \zz + \int_{\Omega} \ff \cdot \zz  + \int_{\partial\D_N} \boldsymbol{h} \cdot \zz  \, ,
\end{equation}
which is also valid for $\epsilon=0$. 
\begin{proposition}[boundedness of {$J[\Gamma]$}]\label{L:bound-J}
There exists a constant $C(\ff,\boldsymbol{g},\boldsymbol{h})$ depending on 
$\Vert \ff \Vert_{[L^2(\D)]^2}$, $\Vert \boldsymbol{g} \Vert_{[H^{1/2}(\partial\D_D)]^2}$, and $\Vert \boldsymbol{h} \Vert_{[L^2(\partial\D_N)]^2}$, but uniform in $ \epsilon \in [0,\frac{\epsilon_0}{2}]$ and $\theta \in I_{\epsilon_0}$ such that
\[
\big| J[\Gamma] \big| \le C(\ff,\boldsymbol{g},\boldsymbol{h}).
\]
\end{proposition}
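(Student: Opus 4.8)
The goal is to bound $|J[\Gamma]|$ uniformly in $\epsilon\in[0,\epsilon_0/2]$ and $\theta\in I_{\epsilon_0}$. The natural route is to use the representation \eqref{eq:expr2-J}, so that the torque is controlled by the energies of the primal solution $(\uu,p)$ of \eqref{eq:Stokes} and the adjoint solution $(\zz,q)$ of \eqref{eq:adjoint-pb}. Applying Cauchy--Schwarz to the three terms in \eqref{eq:expr2-J} gives
\[
\big|J[\Gamma]\big| \le \nu\,\|\Gr^s\uu\|_{0,\Omega}\,\|\Gr^s\zz\|_{0,\Omega} + \|\ff\|_{0,\Omega}\,\|\zz\|_{0,\Omega} + \|\boldsymbol{h}\|_{0,\partial\D_N}\,\|\zz\|_{0,\partial\D_N},
\]
so everything reduces to two a priori bounds: (i) $\|\uu\|_{\VV}\le C(\ff,\boldsymbol{g},\boldsymbol{h})$ for the primal Stokes problem, and (ii) $\|\zz\|_{\VV}\le C(R)$ for the adjoint Stokes problem whose only data is the Dirichlet datum $\Fi$ on $\Gamma$ (together with a trace bound on $\partial\D_N$, which follows from the trace theorem once (ii) holds). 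The remaining factor $\|\boldsymbol{h}\|_{0,\partial\D_N}$ is data.

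For step (i) I would use the standard well-posedness of the Stokes problem with mixed Dirichlet/Neumann data on the fixed outer box $\D$: pick any divergence-free lifting $\uu_{\boldsymbol g}\in\VV$ of the boundary data ($\boldsymbol g$ on $\partial\D_D$, $\boldsymbol 0$ on $\Gamma$), test the weak form with $\uu-\uu_{\boldsymbol g}\in\VVG^{\boldsymbol 0}$, and use coercivity of $a(\cdot,\cdot)$ on $\VVG^{\boldsymbol 0}$ (Korn + Poincar\'e/Friedrichs, valid because $\partial\D_D$ has positive measure) to get $\|\uu\|_{\VV}\lesssim \|\ff\|_{0,\Omega}+\|\boldsymbol h\|_{0,\partial\D_N}+\|\uu_{\boldsymbol g}\|_{\VV}$, and the lifting can be chosen with $\|\uu_{\boldsymbol g}\|_{\VV}\lesssim\|\boldsymbol g\|_{[H^{1/2}(\partial\D_D)]^2}$. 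For step (ii), note that $\Fi(x,y)=(y_\ov-y,\,x-x_\ov)$ is the rigid rotation field about $\ov$; it is globally defined, affine, and divergence-free on all of $\D$, so it is itself an admissible divergence-free lifting of the adjoint boundary data. Testing the adjoint weak form with $\zz-\Fi\in\VVG^{\boldsymbol 0}$ and again using coercivity gives $\|\zz\|_{\VV}\lesssim\|\Fi\|_{\VV}$, and $\|\Fi\|_{\VV}$ is bounded by a universal constant depending only on the geometry of $\D$ (since $\Fi$ is a fixed affine field independent of $\theta$ and $\epsilon$).

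The one point requiring care — and what I expect to be the main obstacle — is the \emph{uniformity} of the coercivity/Poincar\'e constant as $\theta$ and $\epsilon$ vary: the domain $\Omega=\Omega_{\epsilon,\theta}=\D\setminus L$ changes, and for $\epsilon=0$ it even degenerates to a slit domain. One must argue that the Korn and Friedrichs constants on $\VVG^{\boldsymbol 0}(\Omega_{\epsilon,\theta})$ stay bounded independently of these parameters. This is plausible because $\Omega_{\epsilon,\theta}$ is always contained in the fixed box $\D$, the Dirichlet portion $\partial\D_D$ of the boundary is fixed and of positive measure, and functions in $\VVG^{\boldsymbol 0}$ vanish there; extending by zero across $\Gamma$ (for the Friedrichs inequality one can compare with the inequality on the fixed domain $\D$) or invoking a uniform extension operator handles the $\theta$-dependence, and the slit limit $\epsilon\to0$ is the ``best case'' for such inequalities rather than a problem. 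I would make this precise by: extending $\vv\in\VVG^{\boldsymbol 0}(\Omega_{\epsilon,\theta})$ by $\boldsymbol 0$ inside $L$ to get $\widetilde\vv\in H^1_0(\D\setminus\{\text{hinge region}\})$ — actually $\widetilde\vv\in\VV(\D)$ vanishing on $\partial\D_D$ since $\vv|_\Gamma=\boldsymbol0$ makes the extension $H^1$-conforming — and then applying the \emph{fixed} Korn--Friedrichs constant of $\D$ to $\widetilde\vv$, whose norms coincide with those of $\vv$. This reduces both inequalities on the moving domain to the single fixed domain $\D$, yielding the desired uniformity, and the proposition follows by combining with the estimates above.
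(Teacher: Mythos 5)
Your overall route (the bulk representation \eqref{eq:expr2-J}, Cauchy--Schwarz, and uniform $H^1$ a priori bounds for $\uu$ and $\zz$ obtained from divergence-free liftings plus coercivity on divergence-free test functions) is the same as the paper's, and your treatment of the uniformity of the Korn/Friedrichs constants by extending test functions by zero into $L$ is sound. However, there are two concrete gaps in how you construct the liftings, and these liftings are precisely where the uniformity in $(\epsilon,\theta)$ must be earned. First, for the adjoint problem your claim that $\Fi$ "is itself an admissible lifting of the adjoint boundary data'' is false: the adjoint data is $\zz=\Fi$ on $\Gamma$ \emph{and} $\zz=\oo$ on $\partial\D_D$, while the rigid rotation field $\Fi$ does not vanish on $\partial\D_D$ (it vanishes only at the hinge $\ov$, not on the walls or the inflow). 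Hence $\zz-\Fi$ equals $-\Fi\ne\oo$ on $\partial\D_D$, it does not belong to $\VVG^{\boldsymbol{0}}$, and testing the adjoint weak form with it is not legitimate (boundary terms on $\partial\D_D$ appear and coercivity is lost). The paper repairs exactly this by solving an auxiliary Stokes problem on the \emph{fixed} region $\D\setminus\Omega_0$ (where $\Omega_0$ contains all admissible leaflet positions) with data $\Fi$ on $\partial\Omega_0$ and $\oo$ on $\partial\D$, and then extending by $\Fi$ inside $\Omega_0$; this produces a divergence-free lifting matching both pieces of the boundary data, with norm controlled by $\|\Fi\|_{[H^1(\Omega_0)]^2}$ independently of $\epsilon$ and $\theta$.

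Second, for the primal problem you assert that a divergence-free lifting $\uu_{\boldsymbol{g}}$ (equal to $\boldsymbol{g}$ on $\partial\D_D$ and $\oo$ on $\Gamma$) "can be chosen with'' $\|\uu_{\boldsymbol{g}}\|_{\VV}\lesssim\|\boldsymbol{g}\|_{[H^{1/2}(\partial\D_D)]^2}$, but the standard construction of such a lifting uses the Bogovskii/inf-sup operator on $\Omega=\Omega_{\epsilon,\theta}$, whose constant a priori depends on the moving domain; the uniformity of that constant is the whole difficulty and cannot be taken for granted (the paper only establishes the uniform inf-sup constant afterwards, in Lemma \ref{L:inf-sup}, and deliberately avoids it here). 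The paper's device is again to localize: solve a Stokes problem on a fixed U-shaped neighborhood $\Xi$ of $\partial\D$ disjoint from every admissible leaflet position, and extend by zero; the resulting field is divergence-free in all of $\D$, vanishes on every possible $\Gamma$, and its norm is controlled with a constant depending only on the fixed domain $\Xi$. With these two lifting constructions supplied, the rest of your argument goes through.
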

\begin{proof}
This entails a priori bounds for $\|\uu\|_{[H^1(\Omega)]^2}$ and $\|\zz\|_{[H^1(\Omega)]^2}$ that account for the boundary conditions in \eqref{eq:Stokes} and \eqref{eq:adjoint-pb} and are uniform in $\epsilon$ and $\theta$. It is not restrictive, in this proof, to assume that $\D=[-1,1]\times[0,1]$ as depicted in Figure \ref{F:leaflet-model}.
Using polar coordinates $(r,\omega)$ with respect to the hinge $\ov$ and vertical dotted line of Figure \ref{F:leaflet-notations}, we let $\Omega_0$ be a set that contains all admissible positions of the leaflet $\Gamma$: \looseness=-1
\[  
\Omega_0 := \Big \{(r,\omega): \quad 0 \le r \le R <1, \quad -\frac{\pi}2+\frac{\epsilon_0}{2} \le \omega \le \frac{\pi}2-\frac{\epsilon_0}{2} \Big\}.
\]
Hence, the set $\D\setminus\Omega_0$ contains the U-shaped domain
\[
\Xi := \Big\{(x,y): \quad |x \pm 1| < \delta_0  \ \,  \textrm{or} \ \, y < \delta_0 \Big\}
\]
for any $0< \delta_0 < 1-R$. We let $\uu_{\boldsymbol{g}}\in [H^1(\Xi)]^2$ solve the Stokes equation on $\Xi$ with vanishing Dirichlet condition on $\partial\Xi$ except on $\D_{\textrm{in}}$ where $\uu_{\boldsymbol{g}}=\boldsymbol{g}$ and on $\D_{\textrm{out}}$ where we assume homogeneous Neumann conditions. 

We now extend $\uu_{\boldsymbol{g}}$ by zero to $\D\setminus\Xi$, without relabelling, and realize that $\uu_{\boldsymbol{g}}$ is divergence free in $\D$ and $\Vert \uu_{\boldsymbol{g}} \Vert_{\left[ H^1(\OG) \right]^2} \le c \Vert \boldsymbol{g} \Vert_{[H^{1/2}_{00}(\partial\D_\text{in})]^2}$ with $c$ independent of $\epsilon$ and $\theta$.

We now split $\uu = \uu_{\boldsymbol{0}}+\uu_{\boldsymbol{g}}$ with $\uu_{\boldsymbol{0}}$ vanishing on $\partial\Omega \setminus \partial\D_\text{out}$ and write the variational formulation of the momentum equation of \eqref{eq:Stokes} for the pair $(\uu_{\boldsymbol{0}},p)$, bringing  $\uu_{\boldsymbol{g}}$ on the right-hand side. Choosing the divergence-free test functions $\uu_{\boldsymbol{0}}$ eliminates the pressure $p$ and yields
\begin{equation*}\label{eq:bound-upH1-1}
\Vert \uu_{\boldsymbol{0}} \Vert_{\left[ H^1(\OG) \right]^2} \le A \Big(\Vert \ff \Vert_{[L^2({\cal D})]^2} + \Vert \boldsymbol{g} \Vert_{[H^{1/2}_{00}(\partial\D_\text{in})]^2} + \Vert  \boldsymbol{h} \Vert_{[L^2(\partial\D_\text{out})]^2}\Big),
\end{equation*}
where $A$ is independent of $\epsilon$ and $\theta$. A similar bound is thus valid for $\uu$.

Regarding the regularity of $\zz$, we observe that the Dirichlet data $\boldsymbol{\Phi}$ defined in \eqref{eq:def-Phi} is divergence-free. Let $\zz_{\boldsymbol{\Phi}} \in [H^1(\D\setminus\Omega_0)]^2$ solve the Stokes equation with vanishing Dirichlet condition on $\partial\D$ and $\zz_{\boldsymbol{\Phi}} = \boldsymbol{\Phi}$ on $\partial\Omega_0$. Extending $\zz_{\boldsymbol{\Phi}}$ by $\boldsymbol{\Phi}$ within $\Omega_0$, without relabelling, we notice that $\zz_{\boldsymbol{\Phi}}$ is divergence-free and $\|\zz_{\boldsymbol{\Phi}}\|_{[H^1(\Omega)]^2}\le c \|\boldsymbol{\Phi}\|_{[H^1(\Omega_0)]^2}$. Splitting $\zz = \zz_0 + \zz_{\boldsymbol{\Phi}}$ and arguing as before yields
\begin{equation*}
\|\zz\|_{[H^1(\Omega)]^2} \le B \|\boldsymbol{\Phi}\|_{[H^1(\Omega_0)]^2},
\end{equation*}
where $B$ is independent of $\epsilon$ and $\theta$. This concludes the proof.
\end{proof}

The argument in Proposition \ref{L:bound-J} circumvents dealing with the pressures $p$ and $q$. However, they can also be bounded uniformly as the following lemma reveals. This result is useful later in estimating $\T$ and $\Tz$.

\begin{lemma}[uniform lower bound of inf-sup constant]\label{L:inf-sup}
The inf-sup constant $\beta=\beta(\Omega)$ of the domain $\Omega$ for the space pair $(\VVG^{\boldsymbol{0}},Q)$ defined in \eqref{eq:spazi continui},
\[
\beta = \infsup{q\in Q}{\vv\in \VVG^{\boldsymbol{0}}}
\frac{\int_\Omega q \, \dd \vv}{\|q\|_{L^2(\Omega)}|\vv|_{[H^1(\Omega)]^2}} \,,
\]
is bounded away from 0  uniformly with respect to $ \epsilon \in [0,\frac{\epsilon_0}{2}]$ and $\theta \in I_{\epsilon_0}$. 
\end{lemma}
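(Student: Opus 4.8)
The plan is to establish the uniform inf-sup bound by a domain decomposition argument that isolates the potentially troublesome region near the leaflet from the rest of the channel, combined with the classical Bogovskii/right-inverse-of-divergence construction on a reference configuration. The key point is that, although the family of domains $\OG=\OG_{\epsilon,\theta}$ varies with $\epsilon\in[0,\epsilon_0/2]$ and $\theta\in I_{\epsilon_0}$, all of these domains share a common ``good'' core, namely the channel minus the thickened sector $\Omega_0$ introduced in the proof of Proposition \ref{L:bound-J}; only the behaviour inside $\Omega_0\cap\OG$ changes with the parameters, and there the geometry degenerates in a controlled, essentially one-parameter way.

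\textbf{Step 1: reduction to a reference region.} First I would cover $\OG$ by two overlapping open sets: a fixed set $U_1:=\D\setminus\overline{\Omega_1}$ (with $\Omega_1$ a slightly shrunk version of $\Omega_0$ still containing all admissible leaflets), which is independent of $\epsilon$ and $\theta$ and hence has a fixed inf-sup constant $\beta_1>0$; and a set $U_2$ which is a neighbourhood of $\Omega_0\cap\OG$ inside $\OG$. The overlap $U_1\cap U_2$ contains a fixed annular/washer-shaped region that does not touch $\Gamma$. I would then use the standard localization lemma for the inf-sup constant (e.g. the one behind the ``Boland--Nickolas'' or partition-based arguments): given $q\in Q=L^2(\OG)$, split $q=q_1+q_2$ with $q_i$ supported (essentially) in $U_i$, construct on each $U_i$ a velocity $\vv_i\in[H^1_0(U_i)]^2$ with $\dd\vv_i=q_i-\bar q_i$ and control on the means via a corrector supported in the overlap; then glue. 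This reduces the whole problem to producing a right inverse of the divergence on $U_2$, i.e. on the fluid region surrounding the leaflet, with $H^1$-norm bounded uniformly in $\epsilon,\theta$.

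\textbf{Step 2: uniformity on the leaflet neighbourhood.} For the region $U_2$ near the leaflet I would work in polar coordinates $(r,\omega)$ centred at $\ov$. For the fat case $\epsilon>0$, $\OG\cap\Omega_0$ is the sector $\{0\le r\le R,\ |\omega-\theta|\le\tfrac\pi2-\tfrac{\epsilon_0}{2}\}$ minus the thinner sector $\{|\omega-\theta|\le\epsilon\}$; a rotation by $-\theta$ maps this to a domain depending only on $\epsilon$ (not $\theta$), and this rotation is an isometry, so the inf-sup constant is $\theta$-independent. It then remains to show that the inf-sup constant of the fixed-shape ``slit sector'' domain $S_\epsilon$ is bounded below uniformly as $\epsilon\downarrow0$, including the limit slit domain $S_0$ (where $\Gamma$ is a segment). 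This I would do by a further decomposition: away from the slit the domain converges nicely; near the slit tip and along the slit one has a Lipschitz (indeed piecewise-smooth) domain whose Lipschitz character does \emph{not} degenerate as $\epsilon\to0$ — the two faces of the slit stay at mutually bounded angle, and the limit domain $S_0$ is a genuine slit domain which is still a John domain / admits a Bogovskii operator. Since the Bogovskii constant depends only on the John parameter (or on a uniform cone/flatness condition), it stays bounded, and by the usual perturbation argument the constants for $S_\epsilon$ converge to that of $S_0$; in particular $\inf_{\epsilon}\beta(S_\epsilon)>0$.

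\textbf{Step 3: assembly and the limit case.} Combining Steps 1 and 2 via the localization lemma gives $\beta(\OG)\ge c(\beta_1,\beta(U_2))>0$ with all constants independent of $\epsilon$ and $\theta$, which is the claim. I expect the main obstacle to be Step 2 in the degenerate limit $\epsilon=0$: one must verify that a slit (crack-type) domain still supports a bounded right inverse of the divergence — this is true because $L^2$ pressures do not ``feel'' the slit (it has zero measure) and the domain remains a John domain with John constant bounded away from the degenerate regime — but articulating this cleanly, and checking that the John/cone parameter of $S_\epsilon$ is bounded below uniformly in $\epsilon$ and (after the rotation) in $\theta$, together with the highly anisotropic pieces that occur for $\theta$ near $\pm(\tfrac\pi2-\epsilon_0)$, is the delicate part. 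An alternative to John-domain technology, which I would mention as a fallback, is to build the right inverse explicitly on $S_\epsilon$ by hand using the sector structure (solving a sequence of 1D ODEs in $r$ and a decomposition in the angular variable), which trades abstract machinery for an explicit but uniform construction.
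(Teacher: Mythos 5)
Your strategy is viable and lands on the same two pillars as the paper (a uniform local right inverse of the divergence plus a gluing step), but the route is genuinely different, and two points need repair. The paper does \emph{not} use an overlapping cover with a fixed ``good'' core: it cuts $\Omega$ into two \emph{non-overlapping} pieces $\Omega_1,\Omega_2$ by extending the bisector of the leaflet from the hinge to $\partial\D$, observes that each piece is star-shaped with respect to a ball of radius $\epsilon_0/4$, and invokes Dur\'an's explicit bound (the constant of the right inverse of $\dd$ on a domain star-shaped w.r.t.\ a ball is controlled by $r|\log r|$ with $r$ the diameter-to-ball-radius ratio). This single observation kills, in one stroke, everything your Step 2 has to work for: the $\theta$-dependence, the anisotropy near $\theta=\pm(\tfrac\pi2-\epsilon_0)$, and above all the limit $\epsilon\to 0$, because after the bisector cut each face of the slit is an ordinary boundary piece of a star-shaped domain --- no John-domain theory for crack domains and no limiting/perturbation argument for $\beta(S_\epsilon)\to\beta(S_0)$ is ever needed. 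Your fallback remark is well taken: the ``perturbation argument'' for inf-sup constants under domain degeneration is exactly the delicate point, and the clean way out is to exhibit a \emph{uniform} geometric parameter (John constant in your setup, star-shapedness radius in the paper's); as written, your Step 2 asserts rather than proves that uniformity. The gluing is then done by the Boland--Nicolaides macroelement argument (zero-mean part handled locally, piecewise-constant part handled by a flux corrector through the separating segment), which is the non-overlapping analogue of your localization lemma.

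There is one genuine gap you should fix regardless of which route you take: the statement concerns the pair $(\VVG^{\boldsymbol{0}},Q)$ with $Q=L^2(\Omega)$, \emph{not} $L^2_0(\Omega)$. Your construction produces velocities in $[H^1_0(U_i)]^2$ and corrects only the \emph{relative} means $\bar q_i$; since every $\vv\in[H^1_0(\Omega)]^2$ satisfies $\int_\Omega \dd\,\vv=0$, the globally constant pressure mode is invisible to your test functions and the argument as written cannot bound it. The paper devotes a separate third step to this: it picks $\ww\in\VVG^{\boldsymbol{0}}$ with unit flux through the Neumann boundary $\partial\D_N$ (which is nonempty and fixed, so $|\ww|_{H^1}$ is uniformly controlled) and uses it to absorb the mean value of $q$. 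You need the same additional corrector.
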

\begin{proof}
We proceed in three steps. We first decompose the domain $\Omega$ into two subdomains upon extending the bisector of the leaflet $L$ starting at the hinge $\ov$ until it intersects the boundary of $\D$. This divides $\Omega$ into two disjoint subdomains $\Omega_1$ and $\Omega_2$ with reentrant corners separated by a straight segment $S$ (the bisector extension); see Figure \ref{F:leaflet-notations}. We show that these domains possess a uniform inf-sup constant in the spaces $[H^1_0(\Omega_i)]^2$, namely with zero trace. We next prove a uniform global inf-sup constant in $[H^1_0(\Omega)]^2$. We finally extend the inf-sup to the space $\VVG^{\boldsymbol{0}}$ of velocities that vanish only on $\partial\Omega\setminus\partial\D_N$. It is not restrictive to assume again that $\D=[-1,1]\times[0,1]$ as well as that the leaflet length satisfies $R\le 1-\frac{\epsilon_0}{2}$.

\medskip\noindent  
1. {\it Local inf-sup constants.}
Since $\epsilon\le\frac{\epsilon_0}{2}$, the smallest angle made by the boundary $\Gamma$ of $L$ and the upper wall of $\D$ is bounded below by $\frac{\epsilon_0}{2}$. Moreover, the distance from $\Gamma$ to the lower wall of $\D$ is also bounded below by $\frac{\epsilon_0}{2}$. Therefore, there exist two balls $B_1$ and $B_2$ with radii $\frac{\epsilon_0}{4}$ and centers within $\Omega_1$ and $\Omega_2$, depending on $\theta$, such that $\Omega_1$ and $\Omega_2$ are star-shaped with respect to $B_1$ and $B_2$, respectively.

The inf-sup contant in $\Omega_i$ is the reciprocal of the stability constant of the right inverse of the operator $\dd: [H^1_0(\Omega_i)]^2 \to L^2_0(\Omega_i)$ for $i=1,2$, where $L^2_0(\Omega_i)$ stands for functions in $L^2(\Omega_i)$ with vanishing mean \cite{GiraultRaviart:86,DuranMuschietti:01,Duran:12}. According to Remark 3.1 of \cite{Duran:12}, such constant is bounded above by $r|\log r|$, where $r$ is the ratio between the radius of a uniform ball containing $\Omega_i$, say 2, and the radius $\frac{\epsilon_0}{4}$ of $B_i$ irrespective of the location of $B_i$ within $\Omega_i$. This shows the existence of an inf-sup constant $\beta_0$ for $\Omega_i, i=1,2$ with uniform lower bound solely depending on $\epsilon_0$.

\medskip\noindent  
2. {\it Global inf-sup constant in $[H^1_0(\Omega)]^2$.}
We now follow \cite{BolandNicolaides:83} to glue $\Omega_1,\Omega_2$ together; see Section 1.4 of \cite{GiraultRaviart:86}. Given $q \in L^2_0(\Omega)$, we decompose it as $q = \widetilde{q} + \overline{q}$, where in $\Omega_i$ the function $\widetilde{q}$ has zero mean whereas $\overline{q}$ is constant and given by the mean value of $q$ within $\Omega_i$. We thus have the $L^2$-orthogonal decomposition
\[
\| q \|_{L^2(\Omega)}^2 = \| \widetilde{q} \|_{L^2(\Omega)}^2  + \| \overline{q} \|_{L^2(\Omega)}^2. 
\]
In view of Step 1 and \cite{GiraultRaviart:86}, we can associate $\widetilde{\vv}_i\in [H^1_0(\Omega_i)]^2$ to $\widetilde{q}_i=\widetilde{q}|_{\Omega_i}$ so that
\[
\int_{\Omega_i} \widetilde{q}_i \, \dd \widetilde{\vv}_i = \|\widetilde{q}_i\|_{L^2(\Omega)}^2,
\qquad
|\widetilde{\vv}_i|_{[H^1(\Omega_i)]^2} \le \frac{1}{\beta_0} \|\widetilde{q}_i\|_{L^2(\Omega_i)}.
\]
Let $\widetilde{\vv}\in[H^1_0(\Omega)]^2$ be so that $\widetilde{\vv}|_{\Omega_i}=\widetilde{\vv}_i$ and note that $|\widetilde{\vv}|_{[H^1(\Omega)]^2} \le \frac{1}{\beta_0} \|\widetilde{q}\|_{L^2(\Omega)}$. Since $q\in L^2_0(\Omega)$ yields $\overline{q}_1 |\Omega_1| + \overline{q}_2 |\Omega_2| = 0$, we deduce that $\overline{q}_1=\overline{q}|_{\Omega_1}$ and $\overline{q}_2=\overline{q}|_{\Omega_2}$ have opposite signs. Let $\sigma=\pm 1$ be the sign of $\overline{q}_1$ and $\overline{\ww} \in [H^1_0(\Omega)]^2$ satisfy
\[
\int_S \overline{\ww} \cdot \nn_1 = \sigma,
\qquad
|\overline{\ww}|_{[H^1(\Omega)]^2} \simeq 1,
\]
where $\nn_1$ is the unit outer normal to $\Omega_1$. Consequently, integrating by parts gives
\[
\int_\Omega \overline{q} \, \dd \overline{\ww} = \int_S (\overline{q}_1 - \overline{q}_2)
\, \overline{\ww}\cdot\nn_1 = |\overline{q}_1| + |\overline{q}_2|
\simeq \|\overline{q}\|_{L^2(\Omega)}
\]
and let $C\simeq 1$, $\gamma_0 \simeq 1$ be so that $\overline{\vv} = C \|\overline{q}\|_{L^2(\Omega)} \overline{\ww}\in [H^1_0(\Omega)]^2$ satisfies
\[
\int_{\Omega} \overline{q} \, \dd \overline{\vv} = \|\overline{q}\|_{L^2(\Omega)}^2,
\qquad
|\overline{\vv}|_{[H^1(\Omega)]^2} \le \frac{1}{\gamma_0} \|\overline{q}\|_{L^2(\Omega)}.
\]
To prove the inf-sup property in $\Omega$, we construct a velocity $\vv = \widetilde{\vv} + \alpha \overline{\vv}$ with $\alpha>0$ to be determined. We observe that a direct calculation yields
\[
\int_\Omega q \, \dd \vv = \|\widetilde{q}\|_{L^2(\Omega)}^2 + \alpha \|\overline{q}\|_{L^2(\Omega)}^2 + \alpha \int_\Omega \widetilde{q} \, \dd \overline{\vv}.
\]
Since $\|\dd\overline{\vv}\|_{L^2(\Omega)} \le |\overline{\vv}|_{[H^1(\Omega)]^2}$, in view of Lemma 2.1 of \cite{NochettoPyo:04}, the Cauchy-Schwarz and Young inequalities imply
\[
\int_\Omega q \, \dd \vv \ge \Big(1-\frac{\alpha}{2\gamma_0\delta} \Big)
\|\widetilde{q}\|_{L^2(\Omega)}^2
+ \alpha \Big( 1 - \frac{\delta}{2\gamma_0}  \Big) \|\overline{q}\|_{L^2(\Omega)}^2
\ge \frac{1}{2} \min\{1,\gamma_0^2\} \|q\|_{L^2(\Omega)}^2,
\]
provided $\delta=\gamma_0$ and $\alpha=\gamma_0^2$, along with
\[
|\vv|_{[H^1(\Omega)]^2} \le \frac{1}{\beta_0} \|\widetilde{q}\|_{L^2(\Omega)}
+ \frac{\alpha}{\gamma_0} \|\overline{q}\|_{L^2(\Omega)} \le \Big(\frac{1}{\beta_0^2} + \gamma_0^2\Big)^{\frac12} \|q\|_{L^2(\Omega)}.
\]
The uniform inf-sup constant in $[H^1_0(\Omega)]^2$ is thus
$\beta_1=\frac12 \min\{1,\gamma_0^2\} (\beta_0^{-2}+\gamma_0^2)^{-\frac12}$.

\medskip\noindent  
3. {\it Global inf-sup constant in $\VVG^{\boldsymbol{0}}$.} Let $q=\widetilde{q}+\overline{q} \in Q=L^2(\Omega)$ be given, with $\widetilde{q} \in L^2_0(\Omega)$ and $\overline{q}$ being the mean-value of $q$. We let $\widetilde{\vv}\in [H^1_0(\Omega)]^2$ satisfy
\[
\int_\Omega \widetilde{q} \, \dd \widetilde{\vv} = \|\widetilde{q}\|_{L^2(\Omega)}^2,
\qquad
|\widetilde{\vv}|_{[H^1(\Omega)]^2} \le \frac{1}{\beta_1} \|\widetilde{q}\|_{L^2(\Omega)},
\]
in light of Step 2. We proceed as in Step 2. We first let $\ww \in \VVG^{\boldsymbol{0}}$ satisfy
\[
\int_\Omega \dd \ww = \int_{\partial\Omega} \ww\cdot\nn = \int_{\partial\D_N} \ww\cdot\nn = 1,
\qquad |\ww|_{H^1(\Omega)} \simeq 1,
\]
and next let $C\simeq 1$, $\gamma_1 \simeq 1$ and $\overline{\vv}=C\|\overline{q}\|_{L^2(\Omega)}\ww\in \VVG^{\boldsymbol{0}}$ be so that 
\[
\int_\Omega \overline{q} \, \dd \overline{\vv} = \|\overline{q}\|_{L^2(\Omega)}^2,
\qquad |\overline{\vv}|_{[H^1(\Omega)]^2} \le \frac{1}{\gamma_1} \|\overline{q}\|_{L^2(\Omega)}.
\]
A straightforward calculation shows that the function $\vv=\widetilde{\vv} + \gamma_1^2 \overline{\vv}\in\VVG^{\boldsymbol{0}}$ satisfies
\[
\int_\Omega q \, \dd \vv \ge \frac12 \min\{1,\gamma_1^2\} \|q\|_{L^2(\Omega)}^2,
\qquad |\vv|_{[H^1(\Omega)]^2} \le \Big(\frac{1}{\beta_1^2} + \gamma_1^2 \Big)^{\frac12}
\|q\|_{L^2(\Omega)},
\]
which is the asserted inf-sup property with $\beta=\frac12 \min\{1,\gamma_1^2\} (\beta_1^{-2}+\gamma_1^2)^{-1/2}$. 
\end{proof}

In the proof of Proposition \ref{L:bound-J} we show, in particular, uniform bounds for the velocity solutions ${\uu}$ and ${\zz}$ of \eqref{eq:Stokes} and \eqref{eq:adjoint-pb}. Combining such bounds with \eqref{eq:Stokes}, \eqref{eq:adjoint-pb} and using Lemma \ref{L:inf-sup}, deriving uniform bounds on the natural norms for the velocity-pressure pairs $(\uu,p)$ and $(\zz,q)$ is immediate.
\begin{corollary}[uniform stability]\label{cor:energy-uz} 
There exists a constant $C$ independent of $ \epsilon \in [0,\frac{\epsilon_0}{2}]$ and $\theta\in I_{\epsilon_0}$ such that
\begin{equation}\label{eq:energy-bound}
  \Vert \uu \Vert_{\left[ H^1(\Omega)) \right]^2} + \Vert p \Vert_{L^2(\Omega)}
  \le C \Big( \Vert \ff \Vert_{[L^2({\cal D})]^2} + \Vert \boldsymbol{g} \Vert_{[H^{1/2}_{00}(\partial\D_\text{in})]^2} + \Vert  \boldsymbol{h} \Vert_{[L^2(\partial\D_\text{out})]^2}  \Big)
\end{equation}
and
\begin{equation}\label{eq:energy-z}
   \Vert \zz \Vert_{\left[ H^1(\Omega)) \right]^2} + \Vert q \Vert_{L^2(\Omega)}
  \le C \|\boldsymbol{\Phi}\|_{H^1(\Omega_0)} \lesssim C\,.
\end{equation}

\end{corollary}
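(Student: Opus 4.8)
The velocity estimates are already available from the proof of Proposition~\ref{L:bound-J}, which produces bounds for $\|\uu\|_{[H^1(\Omega)]^2}$ and $\|\zz\|_{[H^1(\Omega)]^2}$ uniform in $\epsilon\in[0,\tfrac{\epsilon_0}{2}]$ and $\theta\in I_{\epsilon_0}$; the second one in the form $\|\zz\|_{[H^1(\Omega)]^2}\le B\,\|\boldsymbol{\Phi}\|_{[H^1(\Omega_0)]^2}$, and the right-hand side is itself bounded by an absolute constant because $\boldsymbol{\Phi}$ is the fixed affine field $\boldsymbol{\Phi}(x,y)=(y_{\ov}-y,\,x-x_{\ov})$ and $\Omega_0$ is a fixed bounded set. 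Hence the only thing to add is the bound on the two pressures, and the plan is to recover it from the uniform inf-sup constant of Lemma~\ref{L:inf-sup} together with the weak forms of \eqref{eq:Stokes} and \eqref{eq:adjoint-pb}.

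The preliminary observation I would make is that the Poincar\'e--Friedrichs and trace constants needed below can be taken uniform in $\epsilon$ and $\theta$. Indeed, any $\vv\in\VVG^{\boldsymbol{0}}$ vanishes on $\Gamma$, so it can be extended by $\boldsymbol{0}$ across $L$ to a field $\widetilde{\vv}\in[H^1(\D)]^2$ with $|\widetilde{\vv}|_{[H^1(\D)]^2}=|\vv|_{[H^1(\Omega)]^2}$ and $\widetilde{\vv}=\boldsymbol{0}$ on the fixed piece $\partial\D_D$ (in the slit case $\epsilon=0$ this is licit precisely because both one-sided traces of $\vv$ on $\Gamma$ vanish). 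Applying the Poincar\'e inequality on the fixed domain $\D$ for functions vanishing on $\partial\D_D$, and then the trace inequality onto the fixed piece $\partial\D_N\subset\partial\D$, one gets $\|\vv\|_{[L^2(\Omega)]^2}+\|\vv\|_{[L^2(\partial\D_N)]^2}\le C_{\D}\,|\vv|_{[H^1(\Omega)]^2}$, with $C_{\D}$ depending only on $\D$ and on the fixed partition $\partial\D=\partial\D_D\cup\partial\D_N$.

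Next I would test the momentum equation of \eqref{eq:Stokes} with an arbitrary $\vv\in\VVG^{\boldsymbol{0}}$ and isolate the pressure term, obtaining $b(\vv,p)=(\ff,\vv)_{0,\Omega}+(\boldsymbol{h},\vv)_{0,\partial\D_N}-\nu\,a(\uu,\vv)$. By Cauchy--Schwarz and the uniform constants just recorded, the right-hand side is bounded by $C\big(\|\ff\|_{[L^2(\D)]^2}+\|\boldsymbol{h}\|_{[L^2(\partial\D_\text{out})]^2}+\nu\,\|\uu\|_{[H^1(\Omega)]^2}\big)\,|\vv|_{[H^1(\Omega)]^2}$. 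Dividing by $|\vv|_{[H^1(\Omega)]^2}$, taking the supremum over $\vv\in\VVG^{\boldsymbol{0}}$, and invoking Lemma~\ref{L:inf-sup} bounds $\|p\|_{L^2(\Omega)}$ by $\beta^{-1}$ times that data-dependent quantity; inserting the uniform velocity bound from Proposition~\ref{L:bound-J} then gives \eqref{eq:energy-bound}. The same steps applied to \eqref{eq:adjoint-pb}, for which the identical test reduces to $b(\vv,q)=-\nu\,a(\zz,\vv)$ for all $\vv\in\VVG^{\boldsymbol{0}}$, give $\|q\|_{L^2(\Omega)}\le\beta^{-1}\nu\,\|\zz\|_{[H^1(\Omega)]^2}\lesssim\|\boldsymbol{\Phi}\|_{[H^1(\Omega_0)]^2}\lesssim 1$, which together with the bound on $\|\zz\|_{[H^1(\Omega)]^2}$ is \eqref{eq:energy-z}.

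The only point that is not completely automatic --- and hence the main, though rather mild, obstacle --- is the uniformity in $\epsilon$ and $\theta$ of the functional-analytic constants on the moving fluid domain $\Omega=\Omega_\epsilon=\D\setminus L$; this is precisely what makes the extension-by-zero argument of the second paragraph necessary, and it is available only because the velocities in $\VVG^{\boldsymbol{0}}$ carry a homogeneous trace on $\Gamma$. Everything else is a routine assembly of the two ingredients already at hand: the velocity estimates of Proposition~\ref{L:bound-J} and the uniform inf-sup lower bound of Lemma~\ref{L:inf-sup}.
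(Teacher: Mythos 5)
Your proposal is correct and follows exactly the route the paper intends: the paper states right before the corollary that the result follows "immediately" by combining the velocity bounds from the proof of Proposition~\ref{L:bound-J} with the equations \eqref{eq:Stokes}, \eqref{eq:adjoint-pb} and the uniform inf-sup constant of Lemma~\ref{L:inf-sup}, which is precisely what you do. Your extra care in checking that the Poincar\'e and trace constants are uniform in $\epsilon$ and $\theta$ via extension by zero across the leaflet is a welcome detail the paper leaves implicit.
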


\subsection{Case $\epsilon>0$: Shape derivative of {$J[\Gamma]$}}\label{S:shape-derivative}
%
We use rules of shape differential calculus (Reynolds Theorem) to compute the rate of variation of $J[\Gamma]$ produced by an infinitesimal rotation of $\Gamma$ around the hinge $\ov$. More precisely, we consider a rotation given by the velocity
\begin{equation}\label{eq:def-V}
\VVe := \Fi \,;
\end{equation} 
this corresponds to a flow dictated by the ODE $\dot{\xv}=\VVe(\xv(t))$, which preserves the rigid structure (and form) of the leaflet. Define the {\sl normal velocity on} $\Gamma$ by
\begin{equation}\label{eq:def-Ve}
\Ve := \VVe \cdot \nn_\Gamma \,.
\end{equation} 
Then, the {\sl shape derivative of $J[\Gamma]$ in the direction $\VVe$} is (formally) given by \cite{DelfourZolesio,SokolowskiZolesio}
\begin{equation}\label{eq:def-shapeJ}
\begin{split}
\delta J[\Gamma; \VVe] &:= - \nu  \int_{\Gamma} \Gr^s \uu : \Gr^s \zz \, \Ve + \int_{\Gamma} \ff \cdot \zz \, \Ve \\
& - \ \nu  \int_{\Omega} \Gr^s \uu' : \Gr^s \zz  - \nu  \int_{\Omega} \Gr^s \uu : \Gr^s \zz' + \int_{\Omega} \ff \cdot \zz' + \int_{\partial\D_N} \boldsymbol{h} \cdot \zz' \,,
\end{split}
\end{equation}
where $\uu'=\uu'(\Gamma; \VVe)$ and $\zz'=\zz'(\Gamma; \VVe)$ are the shape derivatives of $\uu$ and $\zz$ in the direction $\VVe$, and are the solutions of the boundary-value problems
\begin{equation}
\label{eq:def-uprime}
\left\{
\begin{aligned}
 -  \DD \, \Tup & = \oo  & &\text{in $\Omega$,} \\
   \dd \, \uu' & = 0 & &\text{in $\Omega$,}
\end{aligned}
\right.
\quad
\left\{
\begin{aligned}
 \uu' &= - (\Gr \uu) \VVe  & &\text{on $\Gamma$,} \\
 \uu' &= \oo                      & &\text{on $\partial \D_D$,} \\
 \Tup \, \nn &= \oo   & &\text{on $\partial \D_N$\,,} 
\end{aligned}
\right.
\end{equation} 
and
\begin{equation}
\label{eq:def-zprime}
\left\{
\begin{aligned}
 -  \DD \, \Tzp  &= \oo  & &\text{in $\Omega$,} \\
   \dd \, \zz' &= 0  & &\text{in $\Omega$,}
\end{aligned}
\right.
\quad
\left\{
\begin{aligned}
 \zz' &= - (\Gr(\zz-\Fi)) \VVe  & &\text{on $\Gamma$,} \\
 \zz' &= \oo                      & &\text{on $\partial \D_D$,} \\
 \Tzp \, \nn &= \oo   & &\text{on $\partial \D_N$.} 
\end{aligned}
\right.
\end{equation} 
Note that in order to give a meaning to the second integral on the right-hand side of \eqref{eq:def-shapeJ}, we have to assume more regularity on $\ff$, so that its trace on $\Gamma$ is well-defined. This occurs, e.g., if $\ff \in [W^{1,1}(\D)]^2$ because then $\ff \in [L^1(\Gamma)]^2$.
 
Now, we manipulate certain integrals appearing in \eqref{eq:def-shapeJ} and we show that $\delta J[\Gamma; \VVe] $ only depends upon $\uu$ and $\zz$ on $\Gamma$, which will imply that $\delta J[\Gamma; \VVe] $ is well-defined and finite. Let us first observe that
\begin{equation}\label{eq:gradu}
\Gr \uu = \Gr \uu \, \boldsymbol{I} = \Gr \uu \, (\nn \otimes \nn + \ttt \otimes \ttt) = \partial_n \uu \otimes \nn \qquad \text{on } \Gamma\,, 
\end{equation}
since $\uu = \oo$ on $\Gamma$. It follows that
\begin{equation}\label{eq:first-integral}
\begin{split}
I_1 & := -\nu  \int_{\Gamma} \Gr^s \uu : \Gr^s \zz \, \Ve = -\nu  \int_{\Gamma} \Gr \uu : \Gr^s \zz \, \Ve \\
& = -\nu  \int_{\Gamma} \partial_n \uu \otimes \nn :  \Gr^s \zz \, \Ve = -\nu  \int_{\Gamma} \partial_n \uu  \cdot (\Gr^s \zz) \nn \, \Ve \,.
\end{split}
\end{equation}
On the other hand, using \eqref{eq:adjoint-pb} and \eqref{eq:def-uprime}, we have
\begin{equation*}
\begin{split}
I_2 & := -\nu  \int_{\Omega} \Gr^s \uu' : \Gr^s \zz   =  -\nu  \int_{\Omega} \Gr \uu' : \Gr^s \zz  = -\int_{\Omega} \Gr \uu' : \Tz  \\
& =  \int_{\Omega}  \uu' \cdot \DD \, \Tz - \int_\Gamma \uu' \cdot \Tz \nn =  \int_\Gamma (\Gr \uu)\VVe \cdot  \Tz \nn \,.
\end{split}
\end{equation*} 
By \eqref{eq:gradu} we obtain
$$
\nu  \int_\Gamma (\Gr \uu)\VVe \cdot  (\Gr^s \zz) \nn  = \nu  \int_\Gamma \partial_n \uu  \cdot (\Gr^s \zz) \nn \, \Ve 
$$ 
and
$$
\int_\Gamma (\Gr \uu)\VVe \cdot  (q \, \nn)  = \int_\Gamma \partial_n \uu  \cdot \nn \ q \, V = 0 \,, 
$$
which easily follows by combining $\partial_t \uu =\oo$ and $\dd \, \uu = 0$ on $\Gamma$. Thus,
\begin{equation}\label{eq:second-integral} 
I_2 =  \nu  \int_{\Gamma} \partial_n \uu  \cdot (\Gr^s \zz) \nn \, \Ve \,.
\end{equation}
At last, using \eqref{eq:Stokes} and \eqref{eq:def-zprime} we obtain
\begin{equation*}
\begin{split}
I_3  & :=  -\nu  \int_{\Omega} \Gr^s \uu : \Gr^s \zz' = -\nu  \int_{\Omega} \Gr^s \uu : \Gr \zz' = - \int_{\Omega} \T : \Gr \zz' \\
& =  \int_{\Omega} \DD \, \T \cdot \zz' - \int_{\partial \Omega} \T\nn \cdot \zz' \\
& =  -\int_{\Omega} \ff \cdot \zz'- \int_{\partial\D_N} \boldsymbol{h} \cdot \zz' + \int_\Gamma  \T\nn \cdot (\Gr(\zz-\Fi)) \VVe \,.
\end{split}
\end{equation*} 
We need to examine the last term. To this end, we set $\ww=\zz-\Fi$ and note that $\ww=\oo$ on $\Gamma$. Hence, as in \eqref{eq:gradu}, $\Gr \ww = \partial_n \ww \otimes \nn$ and $\Gr \ww \VVe = \partial_n \ww \, V = \partial_n \zz \, V - \partial_n \Fi \, V$.
To proceed further, let us split $\Gamma$ as  
\begin{equation}\label{eq:split-Gamma}
\begin{split}
\Gamma_1 & := \{ \xv=\ov + r(\sin \omega, -\cos \omega) :  0 \le r \le R, \ \omega = \theta - \epsilon \} \,, \\
\Gamma_2 & := \{ \xv=\ov + r(\sin \omega, -\cos \omega) :  0 \le r \le R, \ \omega = \theta + \epsilon \} \,, \\
\Gamma_3 & := \{ \xv=\ov + r(\sin \omega, -\cos \omega) :  r  = R, \ |\omega- \theta| \le \epsilon \} \,.
\end{split}
\end{equation}
Then, since $\frac1r \partial_\omega (r \, \e_\omega^\perp ) = - \e_\omega$ and $\partial_r (r \, \e_\omega^\perp ) = \e_\omega^\perp$, we obtain
$$
\partial_n \Fi = \begin{cases} - \e_\omega & \text{on } \Gamma_1 \,, \\
\phantom{-}\e_\omega & \text{on } \Gamma_2 \,, \\
\phantom{-}\e_\omega^\perp  & \text{on } \Gamma_3 \,,
\end{cases}
\qquad
V = r \e_\omega^\perp  \cdot \nn = \begin{cases} \phantom{-}r & \text{on } \Gamma_1\,, \\
-r & \text{on } \Gamma_2\,, \\
\phantom{-}0 & \text{on } \Gamma_3\,. \\
\end{cases}
$$
This yields
$$
\int_\Gamma  \T\nn \cdot (\Gr(\zz-\Fi)) \VVe  = \int_{\Gamma_1 \cup \Gamma_2}  \T\nn \cdot  \partial_n \zz \, V + \int_{\Gamma_1 \cup \Gamma_2}  \T\nn \cdot  \e_\omega  \, |V|
$$
Since $\nn \cdot \e_\omega = 0$ on $\Gamma_1 \cup \Gamma_2$, we deduce
$$
\int_{\Gamma_1 \cup \Gamma_2}  \T\nn \cdot  \e_\omega  \, |V| = \nu \int_{\Gamma_1 \cup \Gamma_2}  (\Gr^s \uu) \nn \cdot \e_\omega \, |V| \,.
$$
We conclude that
\begin{equation*}\label{eq:third-integral}
 I_3 = -\int_{\Omega} \ff \cdot \zz'- \int_{\partial\D_N} \boldsymbol{h} \cdot \zz' + \int_{\Gamma_1 \cup \Gamma_2}  \T\nn \cdot  \partial_n \zz \, V +\nu \int_{\Gamma_1 \cup \Gamma_2}  (\Gr^s \uu) \nn \cdot \e_\omega \, |V| \,.
\end{equation*} 
Substituting this expression along with \eqref{eq:first-integral} and \eqref{eq:second-integral} into \eqref{eq:def-shapeJ}, we obtain the following formal expression for the shape derivative of $J[\Gamma]$.

\begin{proposition}[formal shape derivative]\label{prop:shapeder1}
The shape derivative $\delta J[\Gamma;\VVe]$ of $J[\Gamma]$ in the direction $\VVe$ is given by
\begin{equation*}\label{eq:shapeder1}
\delta J[\Gamma; \VVe] =  \int_{\Gamma_1 \cup \Gamma_2}  \T\nn \cdot  \partial_n \zz \, V + \nu \int_{\Gamma_1 \cup \Gamma_2}  (\Gr^s \uu) \nn \cdot \e_\omega \, |V|+ \int_{\Gamma_1 \cup \Gamma_2} \ff \cdot \zz \, \Ve \,.
\end{equation*}
\end{proposition}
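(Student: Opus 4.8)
The statement is the endpoint of the computation carried out in the lines preceding it, so the plan is essentially a careful bookkeeping: substitute the three reduced interface expressions $I_1$, $I_2$, $I_3$ into the defining formula \eqref{eq:def-shapeJ} of the shape derivative and track the resulting cancellations. First I would regroup the six terms on the right-hand side of \eqref{eq:def-shapeJ} as
\[
\delta J[\Gamma;\VVe] = I_1 + \int_\Gamma \ff\cdot\zz\,\Ve + I_2 + \Bigl( I_3 + \int_\Omega \ff\cdot\zz' + \int_{\partial\D_N}\boldsymbol{h}\cdot\zz' \Bigr),
\]
noting that $I_2 = -\nu\int_\Omega \Gr^s\uu':\Gr^s\zz$ and $I_3 = -\nu\int_\Omega \Gr^s\uu:\Gr^s\zz'$ are precisely the two bulk terms of \eqref{eq:def-shapeJ} involving the shape derivatives $\uu'$ and $\zz'$.

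Then I would invoke the identities already derived above. By \eqref{eq:first-integral} and \eqref{eq:second-integral} one has $I_1 = -\nu\int_\Gamma\partial_n\uu\cdot(\Gr^s\zz)\nn\,\Ve = -I_2$, so the two boundary integrals carrying $\partial_n\uu\cdot(\Gr^s\zz)\nn$ annihilate each other. From the reduced form of $I_3$, the spurious contributions $-\int_\Omega\ff\cdot\zz' - \int_{\partial\D_N}\boldsymbol{h}\cdot\zz'$ cancel exactly against $+\int_\Omega\ff\cdot\zz' + \int_{\partial\D_N}\boldsymbol{h}\cdot\zz'$, so that $I_3 + \int_\Omega\ff\cdot\zz' + \int_{\partial\D_N}\boldsymbol{h}\cdot\zz' = \int_{\Gamma_1\cup\Gamma_2}\T\nn\cdot\partial_n\zz\,\Ve + \nu\int_{\Gamma_1\cup\Gamma_2}(\Gr^s\uu)\nn\cdot\e_\omega\,|\Ve|$. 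Finally, since the normal velocity $\Ve = \VVe\cdot\nn_\Gamma = r\,\e_\omega^\perp\cdot\nn_\Gamma$ vanishes on the outer arc $\Gamma_3$ (where $\nn_\Gamma$ is radial, hence orthogonal to $\e_\omega^\perp$), the surviving term $\int_\Gamma\ff\cdot\zz\,\Ve$ restricts to $\int_{\Gamma_1\cup\Gamma_2}\ff\cdot\zz\,\Ve$. Collecting the three remaining contributions yields the asserted expression for $\delta J[\Gamma;\VVe]$.

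I do not anticipate a genuine difficulty at the level of this proposition, which is stated as \emph{formal}: all the delicate points have been pushed upstream. The issues I would flag, rather than resolve here, are (i) the legitimacy of the Reynolds transport identity \eqref{eq:def-shapeJ} together with the characterizations \eqref{eq:def-uprime}--\eqref{eq:def-zprime} of the shape derivatives $\uu'$, $\zz'$, for a fluid domain whose boundary $\Gamma$ has a corner at the tip of the leaflet; (ii) the existence of the traces $\partial_n\uu$, $\partial_n\zz$ and $(\Gr^s\uu)\nn$ on the flat faces $\Gamma_1\cup\Gamma_2$ used to write $I_1$, $I_2$, $I_3$, which requires $H^2$-type regularity of $(\uu,p)$ and $(\zz,q)$ up to $\Gamma_1,\Gamma_2$ away from the reentrant corner (plausible by interior-and-boundary Stokes regularity along the straight portions of $\partial\Omega$); and (iii) the extra integrability of $\ff$ — e.g.\ $\ff\in[W^{1,1}(\D)]^2$, as already remarked after \eqref{eq:def-zprime} — guaranteeing that $\ff\cdot\zz\,\Ve$ is a meaningful boundary integral. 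For the proof of the proposition itself, it then suffices to substitute \eqref{eq:first-integral}, \eqref{eq:second-integral} and the displayed reduced expression for $I_3$ into \eqref{eq:def-shapeJ} and use $\Ve\equiv 0$ on $\Gamma_3$.
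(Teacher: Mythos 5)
Your proposal is correct and follows the paper's own route exactly: the proposition is proved by substituting the reduced expressions \eqref{eq:first-integral}, \eqref{eq:second-integral} and the displayed form of $I_3$ into \eqref{eq:def-shapeJ}, noting $I_1+I_2=0$ and the cancellation of the $\ff\cdot\zz'$ and $\boldsymbol{h}\cdot\zz'$ terms, with $\Ve\equiv0$ on $\Gamma_3$ localizing the force term to $\Gamma_1\cup\Gamma_2$. The regularity caveats you flag are precisely those the paper defers to Proposition \ref{prop:shapeder2}.
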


In order to check that the integrals on the right-hand side of $\delta J[\Gamma; \VVe]$ are finite, we must invoke regularity of $\uu$ and $\zz$ higher than $H^1$, at least in a neighborhood of $\Gamma$. Consequently, we must improve upon Lemma \ref{L:bound-J}. This is our next task.

\begin{proposition}[boundedness of {$\delta J[\Gamma; \VVe]$}]\label{prop:shapeder2}
Let $0<\epsilon\le\epsilon_0/2$ be fixed and $\ff \in [W^{1,1}(\D)]^2$, $\boldsymbol{g} \in [H^{1/2}_{00}(\partial\D_\text{in})]^2$ and $\boldsymbol{h} \in [L^2(\partial\D_\text{out})]^2$. Then, the shape derivative $\delta J[\Gamma; \VVe]$ is well-defined, and there exists a constant $C = C(\ff, \boldsymbol{g}, \boldsymbol{h}) >0$ depending on $\|\ff\|_{[W^{1,1}(\D)]^2}$, $\|\boldsymbol{g}\|_{[H^{1/2}_{00}(\partial\D_\text{in})]^2}$ and $\|\boldsymbol{h}\|_{[L^2(\partial\D_\text{out})]^2}$ such that
\begin{equation}\label{eq:shapeder2}
\big| \, \delta J[\Gamma; \VVe] \, \big| \le C(\ff, \boldsymbol{g}, \boldsymbol{h}) \,.
\end{equation}
Furthermore, the constant $C(\ff, \boldsymbol{g}, \boldsymbol{h})$ is uniform with respect to $\theta\in I_{\epsilon_0}$.
\end{proposition}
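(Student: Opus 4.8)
\emph{Proof plan.} The plan is to bound separately the three boundary integrals in the formula for $\delta J[\Gamma;\VVe]$ of Proposition~\ref{prop:shapeder1}. The weights that appear, $V=\pm r$, $|V|=r$ and $\Ve=\Fi\cdot\nn_\Gamma$, are all bounded on $\Gamma_1\cup\Gamma_2$ by a constant depending only on $R$; moreover $\zz|_\Gamma=\Fi|_\Gamma=r\,\e_\omega^\perp$ is bounded, and $\ff\in[W^{1,1}(\D)]^2$ has a trace $\ff|_\Gamma\in[L^1(\Gamma)]^2$ (as observed after \eqref{eq:def-zprime}), so the third term $\int_{\Gamma_1\cup\Gamma_2}\ff\cdot\zz\,\Ve$ is immediately finite. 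Everything thus reduces to showing that $\T\nn=\nu(\Gr^s\uu)\nn+p\,\nn$ and $\partial_n\zz$ admit traces on $\Gamma_1\cup\Gamma_2$ lying in $[L^2(\Gamma_1\cup\Gamma_2)]^2$: the first integral is then handled by Cauchy--Schwarz, the second by $L^2\hookrightarrow L^1$.

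To get these traces I would use local elliptic regularity for the Stokes pairs $(\uu,p)$ and $(\zz,q)$ near $\Gamma_1\cup\Gamma_2$. Away from the two endpoints of $\Gamma_1,\Gamma_2$ the boundary data are smooth ($\uu=\oo$, $\zz=\Fi$ affine) and the forcing lies in $[L^2(\D)]^2$ (since $W^{1,1}(\D)\hookrightarrow L^2(\D)$ in two dimensions), so standard Stokes regularity yields $[H^2]^2\times H^1$ locally, whose gradients and pressures restrict to $H^{1/2}\subset L^2$ on $\Gamma_i$. At the hinge $\ov$, where $\Gamma$ meets the upper wall of $\D$, the fluid domain splits into two sectors of opening strictly less than $\pi$ (because $|\theta|\le\tfrac\pi2-\epsilon_0$) carrying matching pure Dirichlet data ($\Fi(\ov)=\oo$ and $\boldsymbol{g}=\oo$ near $\ov$), hence $H^2\times H^1$ regularity holds there as well, and in any case the weights $V,\Ve$ vanish like $r$ at $\ov$. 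The only truly singular points are the leaflet tips, where a flat side $\Gamma_1$ or $\Gamma_2$ meets the arc $\Gamma_3$ orthogonally, producing a reentrant corner whose fluid-side opening equals $\tfrac{3\pi}{2}$ and which again carries pure Dirichlet data. By the classical corner-regularity theory for the Stokes system in plane sectors, $(\uu,p)$ and $(\zz,q)$ belong to $[H^{1+s}]^2\times H^s$ on a neighbourhood of each tip for every $s$ below the leading singular exponent $\lambda_1=\lambda_1(\tfrac{3\pi}{2})\approx0.544$; since $\lambda_1>\tfrac12$, one may pick $s>\tfrac12$, and then $\Gr\uu$, $p$, $\Gr\zz$, $q$ have $L^2$ traces on $\Gamma_1\cup\Gamma_2$ up to the tips. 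Putting these pieces together shows $\delta J[\Gamma;\VVe]$ is well defined and bounded by the local $[H^{1+s}]^2\times H^s$ norms of the two pairs plus $\|\ff\|_{[L^1(\Gamma)]^2}$, hence, via the regularity estimates, by $\|\ff\|_{[W^{1,1}(\D)]^2}$, $\|\boldsymbol{g}\|_{[H^{1/2}_{00}(\partial\D_\text{in})]^2}$, $\|\boldsymbol{h}\|_{[L^2(\partial\D_\text{out})]^2}$ and the geometry of $\Omega$; this gives \eqref{eq:shapeder2} for each fixed $\theta$.

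For the uniformity in $\theta$ I would note that the relevant geometric data are $\theta$-robust: the tip openings are $\tfrac{3\pi}{2}$ independently of $\theta$, the hinge sectors keep openings in a fixed compact subinterval of $(0,\pi)$, the leaflet stays a uniformly positive distance from $\partial\D$ (as in the proof of Proposition~\ref{L:bound-J}), and the basic $H^1\times L^2$ bounds for $(\uu,p)$ and $(\zz,q)$ are $\theta$-uniform by Corollary~\ref{cor:energy-uz}. To pass this robustness to the higher-order estimates I would pull the two problems back to a fixed configuration $\Omega_{\theta_*}$ through diffeomorphisms $\Psi_\theta\colon\Omega_{\theta_*}\to\Omega_\theta$ equal to the rigid rotation about $\ov$ on a fixed neighbourhood of the leaflet and to the identity near $\partial\D$, with $C^k$ norms of $\Psi_\theta^{\pm1}$ bounded uniformly over the bounded interval $I_{\epsilon_0}$. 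On $\Omega_{\theta_*}$ the transported pairs solve Stokes systems with uniformly elliptic, uniformly bounded coefficients that are \emph{exactly constant} near the corners (there $\Psi_\theta$ is a rotation), so the interior and corner estimates hold with $\theta$-independent constants; transporting back yields the uniform bound. (Alternatively, compactness of $I_{\epsilon_0}$ together with continuity of all the norms in play gives the same conclusion.)

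I expect the corner analysis at the reentrant tips to be the crux. The whole argument rests on the strict inequality $\lambda_1(\tfrac{3\pi}{2})>\tfrac12$, which is exactly what makes the \emph{product} $\T\nn\cdot\partial_n\zz$ integrable along $\Gamma_1,\Gamma_2$ up to the tip — a mere $L^p$ bound with $p<2$ for each factor would not be enough. This is also the point where $\epsilon>0$ is genuinely used: as $\epsilon\downarrow0$ the tip becomes a crack tip, the opening tends to $2\pi$, $\lambda_1\downarrow\tfrac12$, and the estimate degenerates, consistently with the fact that the thin leaflet only admits a continuous, not differentiable, torque. A secondary, more routine, point is keeping the corner-regularity constants independent of $\theta$, which the pull-back (or compactness) argument settles.
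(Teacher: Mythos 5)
Your proposal is correct and follows essentially the same route as the paper: decompose the boundary integrals of Proposition \ref{prop:shapeder1}, use $H^2$--$H^1$ Stokes regularity away from the corners and at the convex hinge sectors, invoke the corner-singularity exponent $\alpha\approx 0.544>\tfrac12$ at the reentrant tips to place the traces of $\T\nn$ and $\partial_n\zz$ in $L^2(\Gamma_1\cup\Gamma_2)$, and treat the $\ff\cdot\zz$ term via $W^{1,1}\hookrightarrow L^1(\Gamma)$ and $\zz\in L^\infty$ near $\Gamma$. Your pull-back/compactness argument for $\theta$-uniformity plays the same role as the paper's device of fixing the leaflet in a rotated reference frame, and your closing remark on the degeneration of the exponent as $\epsilon\downarrow0$ matches the paper's discussion following Corollary \ref{cor:der-tau}.
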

\begin{proof} 
In order to establish local regularity estimates around $\Gamma$ beyond $H^1$, it is convenient to adopt a reference system centered at the hinge $\ov$, with the symmetry axis of $L$ as one of the coordinate axis. In this system, $\Omega$ has walls and inflow/outflow boundaries that change with $\theta$, whereas $L$ is fixed. Since by assumption $\ff \in [L^2({\cal D})]^2$ and $\uu=\boldsymbol{0}$ on $\Gamma$ and the upper wall, $(\uu,p)$ has regularity $H^2-H^1$ in a neighborhood of $\Gamma$, except possibly around the hinge $\ov$ and the two corners $C_1:=\Gamma_1\cap \Gamma_3$ and $C_2:=\Gamma_2\cap \Gamma_3$ at the tip. Let us examine these cases.

As $\Gamma$ and the upper wall of ${\cal D}$ form angles smaller than $\pi$, $\Omega$ is convex in a neighborhood ${\cal N}_{\ov,1}$ (resp. ${\cal N}_{\ov,2}$) of the hinge comprised between $\Gamma_1$ (resp. $\Gamma_2$) and the upper wall. Hence, $(\uu,p)$ has regularity $H^2-H^1$ in these neighborhoods (see e.g. Chapter 7 in \cite{Grisvard}). In view of the restrictions on $\epsilon$ and $\theta$, these neighborhoods ${\cal N}_{\ov,i}$ cannot degenerate to segments, and the $H^2-H^1$ norms of $(\uu,p)$ in such neighborhoods can be bounded uniformly with respect to $\epsilon$ and $\theta$.

At the tip corners $C_i$, $i=1,2$, $\OG$ forms angles of measure $\frac{3\pi}2$; in this case, according to \cite{Kondratev67} (see also \cite{Chorfi14}) $\uu$ can be decomposed in a neighborhood ${\cal N}_{C_i}$ of $C_i$ into the sum of a regular part $\uu_\text{reg}$ which is locally $H^2$, and a singular part $\uu_\text{sing}$, which -- in a polar coordinate system $(\varrho,\varphi)$ centered at $C_i$ -- behaves like $\rho^{\alpha} \boldsymbol{U}(\phi)$ with $\alpha \simeq 0.544484$ and $\boldsymbol{U}$ smooth. Thus, $(\uu,p)$ has regularity $H^{1+s}-H^s$ in ${\cal N}_{C_i}$ for any $s < \alpha$, and the neigborhoods ${\cal N}_{C_i}$ can be chosen independent of $\theta$.

In conclusion, by localizing the analysis near $\Gamma$ by a partition-of-unity argument, we can find a tubular neighborhood ${\cal N}_\Gamma$ of $\Gamma$ and constant $C_s$ such that $(\uu,p) \in [H^{1+s}({\cal N}_\Gamma)]^2 \times H^{s}({\cal N}_\Gamma)$ with
\begin{equation*}
\Vert \uu \Vert_{\left[ H^{1+s}({\cal N}_\Gamma) \right]^2} + \Vert p \Vert_{H^{s}({\cal N}_\Gamma) } \le C_s \left( \Vert \ff \Vert_{[L^2({\widetilde{\cal N}}_\Gamma)]^2}  + \Vert \uu \Vert_{\left[ H^1({\widetilde{\cal N}}_\Gamma)) \right]^2} + \Vert p \Vert_{L^2({\widetilde{\cal N}}_\Gamma)} \right)\;,
\end{equation*}
where ${\widetilde{\cal N}}_\Gamma$ is an extension of ${\cal N}_\Gamma$.  The radius of ${\cal N}_\Gamma$ and the constant $C_s$ can be chosen independent of $\theta$. Combining this with \eqref{eq:energy-bound} yields
\begin{equation*}\label{eq:bound-upHs}
\Vert \uu \Vert_{\left[ H^{1+s}({\cal N}_\Gamma) \right]^2} + \Vert p \Vert_{H^{s}({\cal N}_\Gamma) } \le C  \left(\Vert \ff \Vert_{[L^2({\cal D})]^2} + \Vert \boldsymbol{g} \Vert_{[H^{1/2}_{00}(\partial\D_\text{in})]^2} + \Vert  \boldsymbol{h} \Vert_{[L^2(\partial\D_\text{out})]^2}  \right) \; ,
\end{equation*}
and $\|\ff\|_{[L^2(\D)]^2} \lesssim \|\ff\|_{[W^{1,1}(\D)]^2}$.
A similar argument, together with \eqref{eq:energy-z}, applies to the pair $(\zz-\zz_{\boldsymbol{\Phi}},q)$, where $\zz_{\boldsymbol{\Phi}}$ has been defined in Proposition \ref{L:bound-J}, and gives
\begin{equation}\label{eq:bound-zqHs}
\Vert \zz \Vert_{\left[ H^{1+s}({\cal N}_\Gamma) \right]^2} + \Vert q \Vert_{H^{s}({\cal N}_\Gamma) } \lesssim \Vert \boldsymbol{\Phi} \Vert_{[H^1(\Omega_0)]^2} \lesssim 1. 
\end{equation}

Since the tubular neighborhoods and constants in the previous bounds are independent of $\theta\in I_{\epsilon_0}$, choosing $s$ satisfying $\frac12 < s < \alpha$ we deduce that the traces of $\T\nn$, $\Gr^s \uu$, and $\partial_n \zz$ are in $[L^2(\Gamma)]^2$, with norms controlled by the right-hand sides of the bound for $(\uu,p)$ and $(\zz,q)$. To obtain the desired estimate, we further observe that $\zz\in[ H^{1+s}({\cal N}_\Gamma)]^2\subset[L^\infty({\cal N}_\Gamma)]^2$ and $\ff \in [W^{1,1}(\D)]^2 \subset [L^1(\Gamma)]^2$.
 \end{proof}

\begin{corollary}[Lipschitz property of torque]\label{cor:der-tau} 
Let $0<\epsilon\le\epsilon_0/2$ be fixed. Under the regularity assumptions of Proposition \ref {prop:shapeder2}, the torque functional $\tau=\tau(\theta)$ is differentiable for all $\theta \in I_{\epsilon_0}$ with \vskip -0.6cm
\begin{equation}\label{eq:der-tau}
\frac{{\rm d} \tau}{{\rm d} \theta} = \delta J[\Gamma; \VVe] \, ,
\end{equation}
and $\frac{{\rm d} \tau}{{\rm d} \theta}$ is bounded in $I_{\epsilon_0}$.
\end{corollary}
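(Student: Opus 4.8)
The plan is to upgrade the \emph{formal} identity of Proposition~\ref{prop:shapeder1} to a genuine derivative by combining the shape differentiability of the Stokes and adjoint states with the Reynolds transport theorem, and then to read off the boundedness of $\tau'$ directly from the uniform estimate \eqref{eq:shapeder2} of Proposition~\ref{prop:shapeder2}. Fix $\theta_0\in I_{\epsilon_0}$. Since $L$ stays at a uniformly positive distance from the vertical and lower walls of $\D$, and for $\theta_0\in I_{\epsilon_0}$ meets the upper wall at angles bounded below by $\epsilon_0/2$, one can choose a velocity field $\VVe\in[W^{1,\infty}(\D)]^2$ — concretely a radial and angular cutoff of $\Fi$ about the hinge $\ov$, which remains Lipschitz at $\ov$ because $\Fi$ vanishes there linearly — that coincides with $\Fi$ in a fixed neighbourhood of $\Gamma(\theta_0)$ and vanishes near $\partial\D$. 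Its flow $T_t$ then satisfies $T_t(\Omega(\theta_0))=\Omega(\theta_0+t)$ for $|t|$ small, acts as the rigid rotation by angle $t$ about $\ov$ near $\Gamma$ (hence preserves both the shape of $L$ and incompressibility), and keeps $\partial\D$, $\ff$, $\boldsymbol g$, $\boldsymbol h$ fixed. Consequently $\delta J[\Gamma(\theta_0);\VVe]=\frac{{\rm d}}{{\rm d}t}J[T_t\Gamma(\theta_0)]|_{t=0}=\frac{{\rm d}}{{\rm d}t}\tau(\theta_0+t)|_{t=0}$, so it suffices to justify the shape calculus that leads to \eqref{eq:def-shapeJ}.

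Pulling the Stokes problem \eqref{eq:Stokes} and the adjoint problem \eqref{eq:adjoint-pb} back to the fixed domain $\Omega(\theta_0)$ via $T_t$ turns them into Stokes-type systems whose coefficients and data depend smoothly on $t$; and because near $\Gamma$ the pull-back is a rigid rotation, the corner structure at the hinge and at the tip corners $C_1,C_2$ is unchanged, so the local higher regularity of Proposition~\ref{prop:shapeder2} — $(\uu,p)$ and $(\zz-\zz_{\boldsymbol{\Phi}},q)$ in $[H^{1+s}({\cal N}_\Gamma)]^2\times H^s({\cal N}_\Gamma)$ for some $\tfrac12<s<\alpha$ — holds with constants uniform in $t$. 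Coercivity of $a(\cdot,\cdot)$ on $\VVG^{\boldsymbol{0}}$ (Korn) together with the uniform inf-sup constant of Lemma~\ref{L:inf-sup} make the transported solution maps isomorphisms, so the implicit function theorem yields differentiability of $t\mapsto(\uu^t,p^t)$ and $t\mapsto(\zz^t,q^t)$ at $t=0$ in the corresponding (higher-regularity near $\Gamma$) norms. Subtracting the transport terms defines the shape derivatives $\uu'=\dot\uu-(\Gr\uu)\VVe$ and $\zz'=\dot\zz-(\Gr\zz)\VVe$; differentiating the boundary identities satisfied by the transported states on $\Gamma$ and $\partial\D_D$, and using that $\Fi$ is affine (so $\Gr\Fi$ is a constant skew matrix) and that $\VVe=\oo$ near $\partial\D$, one checks that $\uu'$ and $\zz'$ solve exactly \eqref{eq:def-uprime} and \eqref{eq:def-zprime}. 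Applying the Reynolds transport theorem to the bulk representation \eqref{eq:expr2-J} — with the $\partial\D_N$ term reducing to $\int_{\partial\D_N}\boldsymbol h\cdot\zz'$ since $\VVe$ vanishes there — then reproduces \eqref{eq:def-shapeJ}, which by the algebraic reductions $I_1,I_2,I_3$ preceding Proposition~\ref{prop:shapeder1} equals the boundary-integral formula of that proposition. Hence $\tau$ is differentiable at $\theta_0$ with $\tau'(\theta_0)=\delta J[\Gamma;\VVe]$, and since $\theta_0\in I_{\epsilon_0}$ was arbitrary this proves \eqref{eq:der-tau}; the bound $|\tau'(\theta)|=|\delta J[\Gamma;\VVe]|\le C(\ff,\boldsymbol g,\boldsymbol h)$ of Proposition~\ref{prop:shapeder2}, being uniform in $\theta$, gives boundedness of $\tau'$ on $I_{\epsilon_0}$, and therefore the Lipschitz continuity of $\tau$.

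The main obstacle is the differentiability step: making rigorous the existence of $\uu'$, $\zz'$ in a topology strong enough to support the $[L^2(\Gamma)]^2$-traces of $\T\nn$, $\Gr^s\uu$ and $\partial_n\zz$ appearing in $\delta J[\Gamma;\VVe]$, despite the reentrant tip corners (where the regularity is only $H^{1+s}$ with $s<\alpha\approx 0.544$) and the singularity at the hinge. What makes this tractable is precisely that the admissible perturbations are \emph{rigid}: they leave the shape of $\Gamma$, and hence the entire corner geometry and the associated weighted (Kondrat'ev-type) estimates, invariant, so that the $\theta$-dependence of the transported problems enters only through smooth coefficients; the uniform-in-$\theta$ regularity of Proposition~\ref{prop:shapeder2} thereby transfers to uniform-in-$t$ regularity, which is exactly what the implicit function theorem requires. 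A technically lighter alternative, bypassing the implicit function theorem, is to estimate the difference quotients $t^{-1}(\uu^t-\uu^0)$ and $t^{-1}(\zz^t-\zz^0)$ directly: they solve Stokes systems with converging data, so uniform inf-sup, coercivity and the frozen-corner regularity give uniform bounds, a weak limit, and the identification of that limit with $\uu'$ and $\zz'$.
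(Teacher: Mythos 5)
Your proof is correct and follows the same route as the paper: identify the rotation of $\Gamma$ with the flow generated by the velocity field $\VVe=\Fi$, conclude that the difference quotient of $\tau$ converges to the shape derivative $\delta J[\Gamma;\VVe]$, and read off boundedness (hence the Lipschitz property) from Proposition~\ref{prop:shapeder2}. The paper's own proof is only three lines and takes the convergence of the difference quotient to the shape derivative essentially for granted, whereas you supply the rigorous justification that is left implicit there --- the localized Lipschitz deformation field (needed because $\Fi$ is not tangent to $\partial\D$), the pull-back to a fixed domain, and the existence of $\uu'$, $\zz'$ via the implicit function theorem or direct difference-quotient estimates --- so yours is a strengthening of the same argument rather than a different one.
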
 
\begin{proof}
If we rotate $\Gamma=\Gamma(\theta)$ by an angle $\Delta\theta$, leading to $\Gamma'=\Gamma(\theta+\Delta\theta)$, a generic point $\xv = \ov + r\,\e_\omega$ on $\Gamma$ is sent to the point $\xv' = \ov + r\,\e_{\omega+\Delta\theta} \in \Gamma'$. Hence,
$$
\lim_{\Delta\theta \to 0} \frac{\xv' - \xv}{\Delta\theta} = r\, \e_\omega^\perp = \VVe (r, \omega).
$$
In view of the equality $\tau(\theta)=J[\Gamma]$ in \eqref{eq:def-J}, we readily deduce
$$
\lim_{\Delta \theta \to 0} \frac{\tau(\theta+\Delta\theta) - \tau(\theta)}{\Delta\theta} = \delta J[\Gamma; \VVe] \, ,
$$
whence $\tau$ is differentiable in $I_{\epsilon_0}$. Proposition \ref{prop:shapeder2} gives the uniform bound in $I_{\epsilon_0}$.
\end{proof}

We point out that the Lipschitz bound in Corollary \ref{cor:der-tau} might depend on $\epsilon$. To see this, consider the limit $\epsilon\to0$ in which the leaflet $\Gamma$ degenerates into a segment and the asymptotic behaviour near the tip of the functions $\uu$ and $\zz$, in the expression of $\delta J[\Gamma,\boldsymbol{V}]$ of Proposition \ref{prop:shapeder1}, becomes
\[
\uu = \rho^{1/2} \boldsymbol{U}(\phi),
\quad
\zz = \rho^{1/2} \boldsymbol{Z}(\phi),
\]
with both $\boldsymbol{U}$ and $\boldsymbol{Z}$ smooth. The first term in $\delta J[\Gamma,\boldsymbol{V}]$ involves the computation of integrals of the form
\[
\int_{\Gamma_1 \cup \Gamma_2} \partial_n \uu \cdot \partial_n \zz \approx
\int_{\Gamma_1 \cup \Gamma_2} \rho^{-1} d\rho = \infty,
\]
unless special cancellation occurs and the principal value is finite. This explains why the current technical tools at hand are inadequate to derive differentiability of $\tau(\theta)$ for $\epsilon=0$. We content ourselves with continuity in Section \ref{S:shape-derivative}.

\subsection{Case $\epsilon = 0$: Continuity of {$J[\Gamma]$}}\label{S:shape-derivative}
%
We established in Proposition \ref{L:bound-J} that $J[\Gamma]$ is bounded for $\epsilon=0$
uniformly in $\theta\in I_{\epsilon_0}$. We now prove that $J[\Gamma]$ is uniformly continuous in $I_{\epsilon_0}$.

Our departing point is the expression \eqref{eq:expr2-J} for $J[\Gamma]$. Using that $\dd\,\uu=0$ and integrating by parts we rewrite the first term as follows:
\begin{align*}
-\nu  \int_{\Omega} \Gr^s \uu : \Gr^s \zz
& = - \int_\Omega \Gr^s \uu : \Tz = - \int_\Omega \Gr \uu : \Tz
\\ 
& = \int_\Omega \uu \cdot \dd \, \Tz - \int_{\partial\Omega} \uu \cdot \Tz \nn
= -\int_{\partial\D_\text{in}} \boldsymbol{g} \cdot \Tz \nn,
\end{align*}
because $\dd\,\Tz=0$. Consequently, we get the equivalent form of \eqref{eq:expr2-J}
\[
J[\Gamma] = -\int_{\partial\D_\text{in}} \boldsymbol{g} \cdot \Tz \, \nn + \int_{\Omega} \ff \cdot \zz  + \int_{\partial\D_N} \boldsymbol{h} \cdot \zz  \, ,
\]
valid for $0\le \epsilon\le \frac{\epsilon_0}{2}$ and linear in $(\zz,q)$. We denote by $J[\Gamma_\epsilon]$ the functional corresponding to the pair $(\zz_\epsilon,q_\epsilon)$ for $\epsilon>0$, and by $J[\Gamma_0]$ the functional for the pair $(\zz_0,q_0)$ and $\epsilon=0$. The pair $(\ww,s)=(\zz_0 - \zz_\epsilon, q_0 - q_\epsilon)$ satisfies the Stokes system
\begin{equation*}
\left\{
\begin{aligned}
 -  \DD \, \boldsymbol{T}(\ww,s) & = \oo  & &\text{in $\Omega_\epsilon$,} \\
   \dd \, \ww & = 0 & &\text{in $\Omega_\epsilon$,}
\end{aligned}
\right.
\qquad
\left\{
\begin{aligned}
 \ww &= \zz_0 - \Fi  & &\text{on $\Gamma_\epsilon$,} \\
 \ww &= \oo                      & &\text{on $\partial \D_D$,} \\
 \boldsymbol{T}(\ww,s) \, \nn &= \oo   & &\text{on $\partial \D_N$\,,} 
\end{aligned}
\right.
\end{equation*}
where $\Omega_\epsilon = \D \setminus L_\epsilon, \Gamma_\epsilon=\partial L_\epsilon$. We now have an explicit formula for the error
\begin{equation}\label{eq:J-Je}
  J[\Gamma_0] - J[\Gamma_\epsilon] =
  -\int_{\partial\D_\text{in}} \boldsymbol{g} \cdot \boldsymbol{T}(\ww,s) \, \nn + \int_{\Omega_\epsilon} \ff \cdot \ww  + \int_{\partial\D_N} \boldsymbol{h} \cdot \ww + \int_{\Omega_0\setminus\Omega_\epsilon} f \cdot \zz_0.
\end{equation}
This leads to the following statement.

\begin{proposition}[continuity of {$J[\Gamma_0]$}]\label{P:continuity}
The following error estimate
\begin{equation}\label{eq:J0-Je}
\big| J[\Gamma_0] - J[\Gamma_\epsilon] \big| \lesssim \epsilon^{\frac12} |\log\epsilon|^{\frac14}
\end{equation}
is valid uniformly in $\theta\in I_{\epsilon_0}$. Therefore, the function $\tau(\theta)=J[\Gamma_0(\theta)]$ is uniformly continuous in $I_{\epsilon_0}$.
\end{proposition}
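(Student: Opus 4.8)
The plan is to estimate each of the four terms on the right-hand side of \eqref{eq:J-Je} by $\epsilon^{1/2}|\log\epsilon|^{1/4}$, using the explicit Stokes system satisfied by the pair $(\ww,s) = (\zz_0 - \zz_\epsilon, q_0 - q_\epsilon)$. The crucial observation is that this system has zero forcing, zero Neumann data, and homogeneous Dirichlet data on $\partial\D_D$; the only inhomogeneity is the boundary datum $\ww = \zz_0 - \Fi$ on $\Gamma_\epsilon$. Since $\zz_0 = \Fi$ on $\Gamma_0$ by the adjoint problem \eqref{eq:adjoint-pb}, and $\Gamma_\epsilon$ is contained in an $O(\epsilon)$-tubular neighborhood of $\Gamma_0$, the quantity $\zz_0 - \Fi$ restricted to $\Gamma_\epsilon$ is controlled by how much $\zz_0 - \Fi$ can grow as one moves an $O(\epsilon)$-distance away from the segment $\Gamma_0$ on which it vanishes. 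The key difficulty — and the source of the $\epsilon^{1/2}|\log\epsilon|^{1/4}$ rate rather than a cleaner power — is that $\zz_0$ has only the borderline regularity $\rho^{1/2}\boldsymbol{Z}(\phi)$ near the tip of the segment, as recalled after Corollary \ref{cor:der-tau}. Away from the tip, $\zz_0 - \Fi$ vanishes on $\Gamma_0$ and is smooth, so its values on $\Gamma_\epsilon$ are $O(\epsilon)$; near the tip, the $H^{1/2}$-type behavior forces a loss.

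Concretely, I would first construct a suitable ``lifting'': a divergence-free field $\boldsymbol{E}_\epsilon \in [H^1(\Omega_\epsilon)]^2$ that equals $\zz_0 - \Fi$ on $\Gamma_\epsilon$, vanishes on $\partial\D$, and satisfies $\|\boldsymbol{E}_\epsilon\|_{[H^1(\Omega_\epsilon)]^2} \lesssim \epsilon^{1/2}|\log\epsilon|^{1/4}$. The natural candidate is $\zz_0 - \Fi$ itself cut off by a function that localizes to an $O(\epsilon)$-neighborhood of $\Gamma_0$; one must then estimate $\|\nabla((\zz_0-\Fi)\chi_\epsilon)\|_{L^2}$. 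The gradient of the cutoff contributes $\epsilon^{-1}\|\zz_0-\Fi\|_{L^2(\text{tube})}$, and since $\zz_0 - \Fi$ vanishes on $\Gamma_0$ with $H^{1+s}$ regularity for $s<\alpha$ away from the tip but only $H^{1/2}$-scaling at the tip, a careful split of the tube into a tip region of size $O(\epsilon)$ and the complement, using the trace/Hardy-type inequalities together with the bound \eqref{eq:bound-zqHs} on $\|\zz_0\|_{H^{1+s}(\mathcal{N}_{\Gamma_0})}$, should produce the $\epsilon^{1/2}|\log\epsilon|^{1/4}$ bound. Subtracting $\boldsymbol{E}_\epsilon$ from $\ww$ gives a homogeneous Stokes problem whose only data are the (now bulk) terms generated by $\boldsymbol{E}_\epsilon$, so the uniform inf-sup bound of Lemma \ref{L:inf-sup} and the energy argument of Proposition \ref{L:bound-J} yield $\|\ww\|_{[H^1(\Omega_\epsilon)]^2} + \|s\|_{L^2(\Omega_\epsilon)} \lesssim \epsilon^{1/2}|\log\epsilon|^{1/4}$, again uniformly in $\theta$.

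With this energy bound in hand, the four terms in \eqref{eq:J-Je} are dispatched as follows. The bulk term $\int_{\Omega_\epsilon}\ff\cdot\ww$ is bounded by $\|\ff\|_{L^2}\|\ww\|_{L^2} \lesssim \epsilon^{1/2}|\log\epsilon|^{1/4}$; the Neumann term $\int_{\partial\D_N}\boldsymbol{h}\cdot\ww$ by $\|\boldsymbol{h}\|_{L^2(\partial\D_N)}\|\ww\|_{L^2(\partial\D_N)}$ using the trace inequality (note $\partial\D_N$ is a fixed, $\theta$-independent piece of $\partial\D$, away from $\Gamma$, so the trace constant is uniform); the last term $\int_{\Omega_0\setminus\Omega_\epsilon} f\cdot\zz_0$ is even smaller, being integrated over a region of measure $O(\epsilon)$ on which $\zz_0$ is bounded, hence $O(\epsilon)$. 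The remaining term $-\int_{\partial\D_\text{in}}\boldsymbol{g}\cdot\boldsymbol{T}(\ww,s)\,\nn$ requires controlling the trace of the stress $\boldsymbol{T}(\ww,s) = \nu\Gr^s\ww + s\boldsymbol{I}$ on the fixed inflow boundary; by interior-type regularity for the Stokes system away from $\Gamma$ (the boundary $\partial\D_\text{in}$ being at positive distance from $\Gamma$ uniformly in $\theta$, as assumed), one upgrades the $H^1$-$L^2$ bound on $(\ww,s)$ to an $H^{1+s}$-$H^s$ bound in a neighborhood of $\partial\D_\text{in}$, giving a well-defined $L^2(\partial\D_\text{in})$ trace of $\boldsymbol{T}(\ww,s)\,\nn$ controlled by $\epsilon^{1/2}|\log\epsilon|^{1/4}$. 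Combining the four estimates yields \eqref{eq:J0-Je}. Finally, uniform continuity of $\tau$ follows: given any $\theta_1,\theta_2\in I_{\epsilon_0}$, interpolating through a leaflet of small thickness $\epsilon$ and using \eqref{eq:J0-Je} at both angles together with the Lipschitz bound of Corollary \ref{cor:der-tau} for the fat leaflet (whose constant may blow up as $\epsilon\to0$ but is finite for each fixed $\epsilon$), one optimizes over $\epsilon$ to get a modulus of continuity independent of $\theta$. The main obstacle is unquestionably the sharp lifting estimate near the tip: controlling $\|\zz_0 - \Fi\|$ on $\Gamma_\epsilon$ given only the $\rho^{1/2}$ singular behavior of $\zz_0$ there, and extracting the precise $\epsilon^{1/2}|\log\epsilon|^{1/4}$ power, is where all the delicate work lies.
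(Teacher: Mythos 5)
Your overall strategy is the same as the paper's — reduce everything to a uniform-in-$\theta$ energy bound on the pair $(\ww,s)=(\zz_0-\zz_\epsilon,q_0-q_\epsilon)$, driven solely by the boundary datum $\zz_0-\Fi$ on $\Gamma_\epsilon$, with the tip singularity $\rho^{1/2}\boldsymbol{W}(\phi)$ responsible for the loss — but the central estimate is obtained by a genuinely different route. The paper never builds a volumetric lifting: it estimates the trace norm $\|\ww\|_{H^{1/2}_{00}(\Gamma_\epsilon)}$ directly on the boundary, by a dyadic decomposition of $\Gamma_1$ into intervals $I_k=[\epsilon 2^{k-1},\epsilon 2^k)$, computing $\|\ww\|_{L^2(\Gamma_1)}\approx \epsilon|\log\epsilon|^{1/2}$ and $\|\ww\|_{H^1(\Gamma_1)}\approx 1$, and interpolating to get $\epsilon^{1/2}|\log\epsilon|^{1/4}$; the pieces $\Gamma_1,\Gamma_2,\Gamma_3$ are then glued with Faermann's localization lemma (fractional norms not being subadditive), and the extra Hardy-type weight at the hinge gives the $H^{1/2}_{00}$ bound. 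The stress term on $\partial\D_{\text{in}}$ is handled by duality in $H^{1/2}_{00}(\partial\D_{\text{in}})$ together with $\DD\,\boldsymbol{T}(\ww,s)=\oo$ and the uniform inf-sup of Lemma \ref{L:inf-sup}, rather than by your local elliptic regularity near the inflow boundary — both are legitimate. Your cutoff construction is a viable alternative, and it buys a more self-contained argument (no Faermann lemma, no boundary interpolation), but note two points. First, $(\zz_0-\Fi)\chi_\epsilon$ is not divergence-free even though $\zz_0-\Fi$ is, so you need a Bogovskii-type correction, which Lemma \ref{L:inf-sup} does make uniform in $\theta$ and $\epsilon$. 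Second, if you carry out the computation you sketched, the term $\epsilon^{-1}\|\zz_0-\Fi\|_{L^2(\text{tube})}$ with $|\zz_0-\Fi|\approx |y|\,x^{-1/2}$ in a tube of width $\epsilon$ yields $\epsilon^{1/2}|\log\epsilon|^{1/2}$, not $\epsilon^{1/2}|\log\epsilon|^{1/4}$; the sharper quarter-power in the paper comes precisely from the $L^2$--$H^1$ interpolation on the trace, which your volumetric argument does not reproduce. This costs nothing for the conclusion (any $o(1)$ modulus gives uniform continuity), but you should not claim the stated power without the boundary interpolation step. The concluding passage — combining \eqref{eq:J0-Je} with the (possibly $\epsilon$-dependent) Lipschitz bound of Corollary \ref{cor:der-tau} and optimizing over $\epsilon$ — is exactly the paper's argument.
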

\begin{proof}
Uniform continuity of $J[\Gamma_0(\theta)]$ is a consequence of \eqref{eq:J0-Je} and the uniform continuity of $J[\Gamma_\epsilon(\theta)]$ from Corollary \eqref{cor:der-tau}. To show \eqref{eq:J0-Je}, we first note that the estimate \eqref{eq:bound-zqHs} of Proposition \ref{prop:shapeder2} is valid for $\epsilon=0$, and so for $\zz_0$, provided $0<s<\frac12$. We thus deduce $\|\zz_0\|_{[L^\infty({\cal N}_\Gamma)]^2} \lesssim \|\zz_0\|_{[H^{1+s}({\cal N}_\Gamma)]^2} \lesssim 1$ for $0<s<\frac12$ along with \looseness=-1
\[
\Big| \int_{\Omega_0\setminus\Omega_\epsilon} f \cdot \zz_0  \Big| \lesssim \|\ff\|_{L^2(\Omega_0)}
\big|\Omega_0\setminus\Omega_\epsilon \big|^{\frac12} \lesssim \epsilon^{\frac12}.
\]
Therefore, to obtain \eqref{eq:J0-Je} it suffices to prove the error estimate
\begin{equation}\label{eq:H1/2}
  \| \ww \|_{H^{\frac12}_{00}(\Gamma_\epsilon)} \lesssim \epsilon^{\frac12} |\log\epsilon|^{\frac14},
\end{equation}
whence the extension of $\ww$ by $\oo$ to the rest of $\partial\Omega_\epsilon$ satisfies $\|\ww \|_{H^{\frac12}(\partial\Omega_\epsilon)} \lesssim \epsilon^{\frac12} |\log\epsilon|^{\frac14}$.
In fact, this controls $\|\ww\|_{[H^1(\Omega_\epsilon)]^2}$ and takes care of the second and third terms in \eqref{eq:J-Je}. 
{The first term is more problematic, but interpreting the integral as a duality in $H^{1/2}_{00}(\partial\D_\text{in})$ and recalling that $\DD \, \boldsymbol{T}(\ww,s) = \oo$, it is sufficient to bound $\|\boldsymbol{T}(\ww,s)\|_{L^2(\Omega_\varepsilon)}$ in terms of $\|\ww\|_{[H^1(\Omega_\epsilon)]^2}$. This in turn follows from Lemma \ref{L:inf-sup}.}

To prove \eqref{eq:H1/2}, we decompose $\Gamma_\epsilon$ in three disjoint pieces $\Gamma_i=\Gamma_{\epsilon,i}$, the straight sides $\Gamma_1,\Gamma_2$ and the circular arc $\Gamma_3$, but omit writing a subscript $\epsilon$ for simplicity. It turns out to be convenient to represent the geometry as follows: let the leaflet $\Gamma_0=\{(x,0): 0\le x\le 1\}$ with the tip at the origin and the hinge at $(1,0)$, and let
\[
\Gamma_1=\{(x,y): y=\alpha(1-x)\},
\qquad
\Gamma_3=\{(x,y):  x \approx y^2\},
\]
with $\alpha = \tan \epsilon \approx \epsilon$.
Since $\zz_0-\Fi$ is the solution of a Stokes equation with smooth right hand side and $\zz_0-\Fi=\oo$ on $\Gamma_0$, the function $\ww = \zz_0-\Fi$ on $\Gamma_\epsilon$ exhibits the singular behavior of a Stokes velocity near the tip
\[
\ww(r,\phi) = r^{\frac12} \boldsymbol{W}(\phi),
\]
with $\boldsymbol{W}$ smooth satisfying $|\boldsymbol{W}(\phi)| \approx \phi$ near $\phi=0$, plus a regular $H^2$-component. The proof now splits into four steps.

\smallskip\noindent
1. {\it Estimate of $\|\ww\|_{H^{\frac12}(\Gamma_1)}$.}
Note that on $\Gamma_1$ (or $\Gamma_2$) we have $r=\big(x^2+\alpha^2(1-x)^2\big)^{\frac12}$ and $\phi=\arctan\big(\alpha\frac{1-x}{x}\big)$, whence the following approximations are
valid
\[
r \approx
  \begin{cases}
  \epsilon & 0 < x < \epsilon, \\
  x & \epsilon < x \le 1,
  \end{cases}
  \qquad
  \phi \approx
  \begin{cases}
  1 & 0 < x < \epsilon, \\
  \epsilon \frac{1-x}{x} & \epsilon < x \le 1.
  \end{cases}
\]
We decompose the interval $(0,1)$ dyadically, namely let $I_0=[0,\epsilon)$ and $I_k = [\epsilon 2^{k-1}, \epsilon 2^k)$ for all $1\le k \le K\approx |\log \epsilon|$. Since the unit tangent vector to $\Gamma_\epsilon$ is $\ttt = (\cos\epsilon, -\sin\epsilon)$, we can estimate $\ww$ and $\partial_\ttt\ww$ on $I_k$ as follows:
\[
|\ww| \approx\epsilon \big(\epsilon 2^k\big)^{-\frac12},
  ~ |\partial_\ttt\ww| \approx \epsilon \big(\epsilon 2^k \big)^{-\frac32}
  \quad\Rightarrow\quad
  \int_{I_k} |\ww|^2 \approx \epsilon^2,
  ~ \int_{I_k} |\partial_\ttt\ww|^2 \approx 2^{-2k}.
\]
This leads to
\begin{equation*}
  \int_{\Gamma_1} |\ww|^2 \approx \epsilon^2 K \approx \epsilon^2 |\log\epsilon|,
  \quad
  \int_{\Gamma_1} |\partial_\ttt\ww|^2 \approx \sum_{k=0}^K 2^{-2k} \approx 1,
\end{equation*}
and combined with space interpolation yields
\[
\|\ww\|_{H^{\frac12}(\Gamma_1)} \approx \Big( \|\ww\|_{L^2(\Gamma_1)}\|\ww\|_{H^1(\Gamma_1)} \Big)^{\frac12} \approx \epsilon^{\frac12} |\log\epsilon|^{\frac14}.
\]

\noindent
2. {\it Estimate of $\|\ww\|_{H^{\frac12}(\Gamma_3)}$.} Since $x \approx y^2$ on $\Gamma_3$,
we may approximate $|\ww| \approx |y|^{\frac12}$. This function is known to belong to $H^{\frac12}(-1,1)$ so to get its $H^{\frac12}$-norm in the interval $(-\epsilon,\epsilon)$ we simply use a scaling argument. This gives
\[
\|\ww\|_{H^{\frac12}(\Gamma_3)} \lesssim \epsilon^{\frac12}.
\]

\noindent
3. {\it Estimate of $\|\ww\|_{H^{\frac12}(\Gamma_\epsilon)}$.} We have estimates for the
$H^{\frac12}$-norms on the disjoint pieces $\Gamma_1,\Gamma_2,\Gamma_3$, but this does
not give an estimate for $H^{\frac12}(\Gamma_\epsilon)$ because fractional norms are not
subadditive with respect to domain partitions. To get around this issue, we resort to
a location result of Faermann \cite{Faermann}, which states that domains should overlap
with an amount of overlap commensurate with their size. The global $H^{\frac12}$-seminorm square is then bounded by the $H^{\frac12}$-seminorms of the individual pieces plus the $L^2$-norms on each piece scaled
by the reciprocal of the overlap.

In our case, we simply extend the domain $\Gamma_1$ to the upper quarter of $\Gamma_3$, say $\Gamma_3^u$, thus
avoiding to include a neighborhood of the origin.
We next argue that to compute the $H^1$-seminorm in the extended domain it suffices
to add the new piece on $\Gamma_3$ because the function $\ww$ has traces that agree
on both sides of $\Gamma_1\cap\Gamma_3$. Therefore, since $\partial_\ttt \ww \approx y^{-\frac12}$, we get
\[
\int_{\Gamma_3^u} |\partial_\ttt \ww|^2 \approx \int_{\epsilon/2}^\epsilon |\partial_\ttt \ww|^2 dy
\approx \int_{\epsilon/2}^\epsilon y^{-1} dy = \log \epsilon - \log \frac{\epsilon}{2}
= \log 2.
\]
It remains to estimate the scaled $L^2$-norms, namely
\[
\frac{1}{\epsilon} \int_{\Gamma_1}|\ww|^2 \approx \epsilon |\log\epsilon|,
\quad
\frac{1}{\epsilon} \int_{\Gamma_3}|\ww|^2 \approx \epsilon,
\]
because $|\ww|\lesssim\epsilon^{\frac12}$ on $\Gamma_3$. Coupling the three steps gives
the asserted estimate.

\medskip\noindent
4. {\it Estimate of $\|\ww\|_{H^{\frac12}_{00}(\Gamma_\epsilon)}$.}
According to Theorem 1.5.2.3 of \cite{Grisvard}, We need to estimate the quantity $\int_{\Gamma_1} |\ww(z)|^2\textrm{dist} (z,z_0)^{-1} d\sigma(z)$, where $z_0=(1,0)$ is the hinge point of $\Gamma_\epsilon$. To do so, we simply refine the expression of $\ww(z)$ for $z=(x,y)\in\Gamma_1$ from Step 1 in the sense that $\ww(z) \approx \epsilon x^{\frac12}(1-x)$. Since $\textrm{dist} (z,z_0)\approx 1-x$, this yields
\[
\int_{\Gamma_1} \frac{|\ww(z)|^2}{\textrm{dist} (z,z_0)} d\sigma(z)  \approx
\sum_{0\le k\le K} \int_{I_k} \frac{|\ww(z)|^2}{1-x} dx \approx
\epsilon^2 K \approx \epsilon^2 |\log\epsilon|.
\]
Adding this bound to the estimate for $\|\ww\|_{H^{\frac12}(\Gamma_\epsilon)}^2$ of Step 3 concludes the proof.
\end{proof}

\subsection{Solvability}\label{S:solvability}
%
A simple consequence of Corollary \ref{cor:der-tau} and Proposition \ref{P:continuity} is the existence of a solution for the problem under consideration for all $\epsilon\ge0$, provided a suitable but reasonable condition on the spring elastic torque is assumed. This is tackled next.

\begin{proposition}[existence of solution] \label{rm:limit}
Let the spring angular momentum $\kappa$ introduced in \eqref{eq:def-kappa} be continuous in the interval $I_{\epsilon_0}^{\circ}=(-\frac\pi2+\epsilon_0,\frac\pi2-\epsilon_0)$ and satisfy
\begin{equation}\label{eq:stiff}
\lim_{\theta \rightarrow \pm(\tfrac\pi2-\epsilon_0)} \kappa(\theta) = \pm \infty \; .
\end{equation}
Then, under the regularity assumptions of Proposition \ref{prop:shapeder2}, the balance equation \eqref{eq:leaflet_primale} (or equivalently \eqref{eq:balance}) has at least one solution in $I_{\epsilon_0}^{\circ}$ for all $0\le \epsilon\le\frac{\epsilon_0}{2}$. Moreover, if $\epsilon>0$ and $\kappa$ is differentiable with a sufficiently large derivative depending on $\epsilon$, then the solution of \eqref{eq:leaflet_primale} is unique.
\end{proposition}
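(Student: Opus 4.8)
The plan is to reduce the balance equation \eqref{eq:leaflet_primale} to a sign-change argument for the continuous scalar function $F(\theta) := \tau(\theta) - \kappa(\theta)$ on the open interval $I_{\epsilon_0}^{\circ}$, and then to invoke the intermediate value theorem; uniqueness will follow from strict monotonicity of $F$.

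First I would record that $\tau$ is continuous on $I_{\epsilon_0}$: for $\epsilon>0$ this is immediate from Corollary \ref{cor:der-tau} (differentiability implies continuity), while for $\epsilon=0$ it is exactly the uniform continuity asserted in Proposition \ref{P:continuity}. Together with the assumed continuity of $\kappa$ on $I_{\epsilon_0}^{\circ}$, this makes $F$ continuous on $I_{\epsilon_0}^{\circ}$. Next, Proposition \ref{L:bound-J} gives $|\tau(\theta)|\le C(\ff,\boldsymbol{g},\boldsymbol{h})$ uniformly in $\theta\in I_{\epsilon_0}$, so the blow-up hypothesis \eqref{eq:stiff} forces
\[
\lim_{\theta\to-(\frac\pi2-\epsilon_0)} F(\theta) = +\infty,
\qquad
\lim_{\theta\to+(\frac\pi2-\epsilon_0)} F(\theta) = -\infty.
\]
Hence $F$ is positive near the left endpoint and negative near the right endpoint, so by the intermediate value theorem there is $\theta^*\in I_{\epsilon_0}^{\circ}$ with $F(\theta^*)=0$, i.e. $\tau(\theta^*)=\kappa(\theta^*)$, which is \eqref{eq:leaflet_primale}. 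This reasoning is the same for every $0\le\epsilon\le\frac{\epsilon_0}{2}$, since continuity of $\tau$ and the uniform bound of Proposition \ref{L:bound-J} hold in both regimes.

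For uniqueness when $\epsilon>0$, I would use that Corollary \ref{cor:der-tau} together with Proposition \ref{prop:shapeder2} provides a finite bound $M_\epsilon := \sup_{\theta\in I_{\epsilon_0}}\big|\tfrac{{\rm d}\tau}{{\rm d}\theta}\big| < \infty$, which is uniform in $\theta$ but may depend on $\epsilon$. If $\kappa$ is differentiable on $I_{\epsilon_0}^{\circ}$ with $\kappa'(\theta) > M_\epsilon$ for all $\theta$, then $F' = \tau' - \kappa' < 0$ on $I_{\epsilon_0}^{\circ}$, so $F$ is strictly decreasing and can vanish at most once; combined with the existence part this yields a unique solution. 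The phrase ``sufficiently large derivative depending on $\epsilon$'' in the statement is precisely the condition $\inf_\theta\kappa'>M_\epsilon$.

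There is no genuine obstacle here beyond the results already established: the only point requiring care is that the derivative bound $M_\epsilon$ be truly uniform in $\theta$, so that a single threshold on $\kappa'$ suffices — this is exactly the uniform-in-$\theta$ clause of Proposition \ref{prop:shapeder2} — and that $M_\epsilon$ cannot, in general, be taken independent of $\epsilon$, since the remark following Corollary \ref{cor:der-tau} indicates $\tfrac{{\rm d}\tau}{{\rm d}\theta}$ may blow up as $\epsilon\to0$. This is why the uniqueness assertion is restricted to $\epsilon>0$ and comes with an $\epsilon$-dependent stiffness requirement, whereas existence holds for all $\epsilon\ge0$ under only the mild growth condition \eqref{eq:stiff}.
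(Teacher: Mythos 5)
Your proposal is correct and follows essentially the same route as the paper's proof: continuity of $\tau$ via Corollary \ref{cor:der-tau} and Proposition \ref{P:continuity}, the uniform bound of Proposition \ref{L:bound-J} combined with the blow-up of $\kappa$ to force a sign change and apply the intermediate value theorem, and strict monotonicity of $\kappa-\tau$ from the ($\epsilon$-dependent) Lipschitz bound on $\tau$ for uniqueness. No gaps.
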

\begin{proof} 
{The function $\tau(\theta)$ is bounded in $I_{\epsilon_0}$ in view of Lemma \ref{L:bound-J} and is continuous according to Corollary \ref{cor:der-tau} and Proposition \ref{P:continuity}. Since $\kappa(\theta)$ is continuous in $I_{\epsilon_0}^{\circ}$} and tends to $\pm\infty$ at the end points, there is clearly a solution of \eqref{eq:leaflet_primale}. On the other hand, if $\epsilon>0$, then Corollary \ref{cor:der-tau} states that $\tau(\theta)$ is Lipschitz in $I_{\epsilon_0}$ with a constant that might depend on $\epsilon$. If the derivative of $\kappa(\theta)$ exceeds this constant, then the function $\kappa(\theta)-\tau(\theta)$ is strictly increasing in $I_{\epsilon_0}$ and thus the solution is unique.
\end{proof}

Note that the blow-up condition \eqref{eq:stiff} can be interpreted as a stiffening of the spring or as originated by a penalty approximation of the contact condition among the leaflet and the vessel walls.

\section{Virtual Element discretization}
\label{sec:VEMdis}

In this section we first briefly recall the VEM discretization of the stationary fluid equations, then present the discrete coupled problem and finish with the proposed nonlinear iteration scheme.

\subsection{Virtual Elements for the Navier-Stokes equation}
\label{sub:3.1}
%
We now describe briefly various tools from the virtual element technology; we refer the interested reader to the papers \cite{Antonietti-BeiraodaVeiga-Mora-Verani:2014,Stokes:divfree,Vacca:2018,NavierStokes:divfree}.
In particular we recall that the proposed Virtual Elements family presents interesting advantages, such as its flexibility in terms of meshes and its capability of yielding a divergence-free discrete velocity solution.
For the sake of simplicity, here we address the lowest-degree case\cite{Antonietti-BeiraodaVeiga-Mora-Verani:2014} 
that delivers first-order accuracy for both velocity and pressure.
Similar constructions can be used for higher-order schemes 
\cite{Stokes:divfree,Vacca:2018,NavierStokes:divfree}.
Let $\{ \Ph \}_h$ be a sequence of partitions of $\mathcal{D}$ into general polygonal elements $E$ with
\[
 h_E := {\rm diam}(E) , \quad
h := \sup_{E \in \Ph} h_E .
\]
We suppose that for all $h$, each element $E$ in $\Ph$ fulfils the following assumptions:
\begin{description}
\item [$\boldsymbol{(A1)}$] $E$ is star-shaped with respect to a ball $B_E$ of radius $ \geq\, \rho \, h_E$, 
\item [$\boldsymbol{(A2)}$] the distance between any two vertexes of $E$ is $\geq \, \rho  \, h_E$, 
\end{description}
where $\rho$ is a  positive constant. 
We remark that the hypotheses listed above are classical in the virtual element approach (see for instance \cite{volley,Ahmed-et-al:2013}).
Assumption $\boldsymbol{(A2)}$ can be further relaxed, as investigated in ~\cite{BdV-Lovadina-Russo,Brenner-Sung}, 
allowing for more general cases such as meshes with arbitrarily small edges (with respect to the element diameter).
In contrast, very few theoretical results about avoiding assumption $\boldsymbol{(A1)}$ exist currently in the literature; see for instance \cite{LongChen}. The latter relates to anisotropic elements.

\subsection*{Virtual Element Spaces}
%
On each element $E \in \Ph$ we define the following finite dimensional local virtual
spaces of velocities
\begin{multline}
\label{eq:V_hE}
\VV_h^E := \biggl\{  
\boldsymbol{v} \in [H^1(E)]^2 \ : \ 
 \boldsymbol{v}_{|{\partial E}} \in \B_1(\partial E) \,, \quad
 {\rm div} \, \boldsymbol{v} \in \Pk_{0}(E)\,, \biggr.
\\[-10pt]
\bigl. 
 \hskip -1cm \boldsymbol{\Delta}    \boldsymbol{v}  +  \nabla s = \boldsymbol{0} \quad \text{ for some $s \in L_0^2(E)$} \biggr\} \,, 
\end{multline}
\vskip -0.3cm
\noindent with
\[
\B_1(\partial E) := \bigl\{ 
\boldsymbol{v}_{|{\partial E}} \in [C^0(\partial E)]^2 : \ 
\vv_{|_e} \cdot \ttt_e \in \Pk_1(e), \quad 
\vv_{|_e} \cdot \nn_e \in \Pk_2(e) \quad
\text{for all $e \in \partial E$}
\bigr\} \,,
\]
where  $\nn_e$ is the outward unit normal to $E$ and  $\ttt_e$ is the tangent unit vector defined as the counterclockwise rotation of $\nn_e$ by $\pi/2$. 
All the operators and equations in \eqref{eq:V_hE} are to be interpreted in the distributional sense. 
It is easy to realize that $[\Pk_1(E)]^2 \subseteq \VV_h^E$ and this will guarantee the optimal approximation property of the space.
The definition of $\VV_h^E$ above is associated to a Stokes-like variational problem on $E$; in particular
we remark that all functions $\vv \in \VV_h^E$ are uniquely determined by their boundary values $\vv_{|\partial E} \in \B_1(\partial E)$ because 
${\rm div}\, \vv = \frac{1}{|E|} \int_{\partial E} \vv \cdot \nn_e \, {\rm d}s$.
This leads to the following result \cite{Antonietti-BeiraodaVeiga-Mora-Verani:2014}.
\begin{proposition}[dimension and DoFs]
\label{prp:dofs}
Let $\VV_h^E$ be the space defined in \eqref{eq:V_hE}. Then \vskip -.6cm
\begin{equation*}
\dim \, \VV_h^E  = 
\dim \, \B_1(\partial E)
= 3 \, n_E  
\end{equation*}
where $n_E$ is the number of vertexes of $E$. Moreover the following linear forms $\boldsymbol{D_X}$, which split into two subsets (see Fig. \ref{fig:dofsloc}), provide a set of DoFs for $\VV_h^E$:
\begin{itemize}
\item $\boldsymbol{D_X1}$:  the values of $\boldsymbol{v}$ at the vertices of the polygon $E$,
\item $\boldsymbol{D_X2}$: the values of the normal components $\vv \cdot \nn_e$
at the midpoint of each edge of $E$.
\end{itemize}
\end{proposition}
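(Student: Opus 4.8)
The plan is to establish the dimension count and then verify that the proposed linear forms $\boldsymbol{D_X1}$ and $\boldsymbol{D_X2}$ are unisolvent on $\VV_h^E$.

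\textbf{Step 1: Dimension via the boundary trace.} First I would recall, as already noted in the text, that every $\vv\in\VV_h^E$ is uniquely determined by its boundary trace $\vv_{|\partial E}\in\B_1(\partial E)$: indeed, given the boundary data, the divergence is fixed through ${\rm div}\,\vv=\tfrac{1}{|E|}\int_{\partial E}\vv\cdot\nn_e\,{\rm d}s$ (a single scalar, since ${\rm div}\,\vv\in\Pk_0(E)$), and then $\vv$ solves a Stokes-type problem on $E$ with prescribed Dirichlet datum on $\partial E$ and prescribed divergence, which has a unique solution (the pressure $s\in L^2_0(E)$ is determined up to the usual normalization). Hence the trace map $\VV_h^E\to\B_1(\partial E)$ is a linear bijection, so $\dim\VV_h^E=\dim\B_1(\partial E)$. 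Then I would compute $\dim\B_1(\partial E)$ directly: a function in $\B_1(\partial E)$ is a continuous vector field on the polygonal boundary whose tangential component on each of the $n_E$ edges is in $\Pk_1(e)$ and whose normal component on each edge is in $\Pk_2(e)$. Edge by edge this is $2+3=5$ scalar parameters per edge, i.e. $5n_E$, but the $C^0$ matching at each of the $n_E$ vertices imposes $2$ linear constraints (continuity of both components of $\vv$), giving $5n_E-2n_E=3n_E$.

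\textbf{Step 2: Unisolvence of the DoFs.} There are $2n_E$ values in $\boldsymbol{D_X1}$ (the two components of $\vv$ at each vertex) and $n_E$ values in $\boldsymbol{D_X2}$ (the normal component at each edge midpoint), for a total of $3n_E$, matching $\dim\VV_h^E$. So it suffices to prove that a function $\vv\in\VV_h^E$ with all DoFs equal to zero is the zero function. By Step 1 it is enough to show that the boundary trace $\vv_{|\partial E}$ vanishes. Fix an edge $e$ with endpoints $P_1,P_2$ and midpoint $M$. The tangential component $\vv\cdot\ttt_e\in\Pk_1(e)$ is a degree-$1$ polynomial on $e$ whose values at $P_1,P_2$ are zero (from $\boldsymbol{D_X1}$, using that the vertex values of $\vv$ vanish, hence so do all their scalar projections); a degree-$1$ polynomial vanishing at two points is identically zero, so $\vv\cdot\ttt_e\equiv0$ on $e$. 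The normal component $\vv\cdot\nn_e\in\Pk_2(e)$ is a degree-$2$ polynomial on $e$ whose values at $P_1,P_2$ (again from $\boldsymbol{D_X1}$) and at $M$ (from $\boldsymbol{D_X2}$) all vanish; three zeros force a degree-$2$ polynomial to vanish, so $\vv\cdot\nn_e\equiv0$ on $e$. Since $\{\ttt_e,\nn_e\}$ is an orthonormal basis of $\R^2$, this gives $\vv\equiv\oo$ on $e$, and as $e$ was arbitrary, $\vv_{|\partial E}=\oo$, hence $\vv=\oo$ on $E$. This proves unisolvence.

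\textbf{Main obstacle.} The computations are all elementary; the only point requiring care is the well-posedness assertion underpinning Step 1, namely that the local Stokes-type problem defining $\VV_h^E$ (find $\vv\in[H^1(E)]^2$ with prescribed boundary trace, prescribed constant divergence, and $-\dl\vv=\Gr s$ for some $s\in L^2_0(E)$) admits exactly one solution. This is standard but hinges on the compatibility of the prescribed divergence with the boundary flux (guaranteed here precisely by the formula ${\rm div}\,\vv=\tfrac{1}{|E|}\int_{\partial E}\vv\cdot\nn_e$) and on the inf-sup stability of the Stokes operator on $E$ under assumption $\boldsymbol{(A1)}$; I would cite \cite{Antonietti-BeiraodaVeiga-Mora-Verani:2014} for this rather than reprove it, since the statement explicitly attributes the result to that reference. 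The remaining bookkeeping — the $5n_E-2n_E$ count and the polynomial-vanishing arguments — is routine and can be stated tersely.
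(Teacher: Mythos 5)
Your proposal is correct and follows the standard argument for this result; the paper itself gives no proof, delegating it to \cite{Antonietti-BeiraodaVeiga-Mora-Verani:2014}, and your two steps (bijectivity of the trace map onto $\B_1(\partial E)$ via well-posedness of the local Stokes problem with compatible constant divergence, followed by the edge-wise polynomial-vanishing argument for unisolvence) are precisely the argument used there. The only cosmetic quibble is that $s\in L^2_0(E)$ is already normalized, so the pressure is unique rather than "unique up to normalization"; this does not affect the proof.
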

\begin{figure}[!h]
\center{
\includegraphics[scale=0.18]{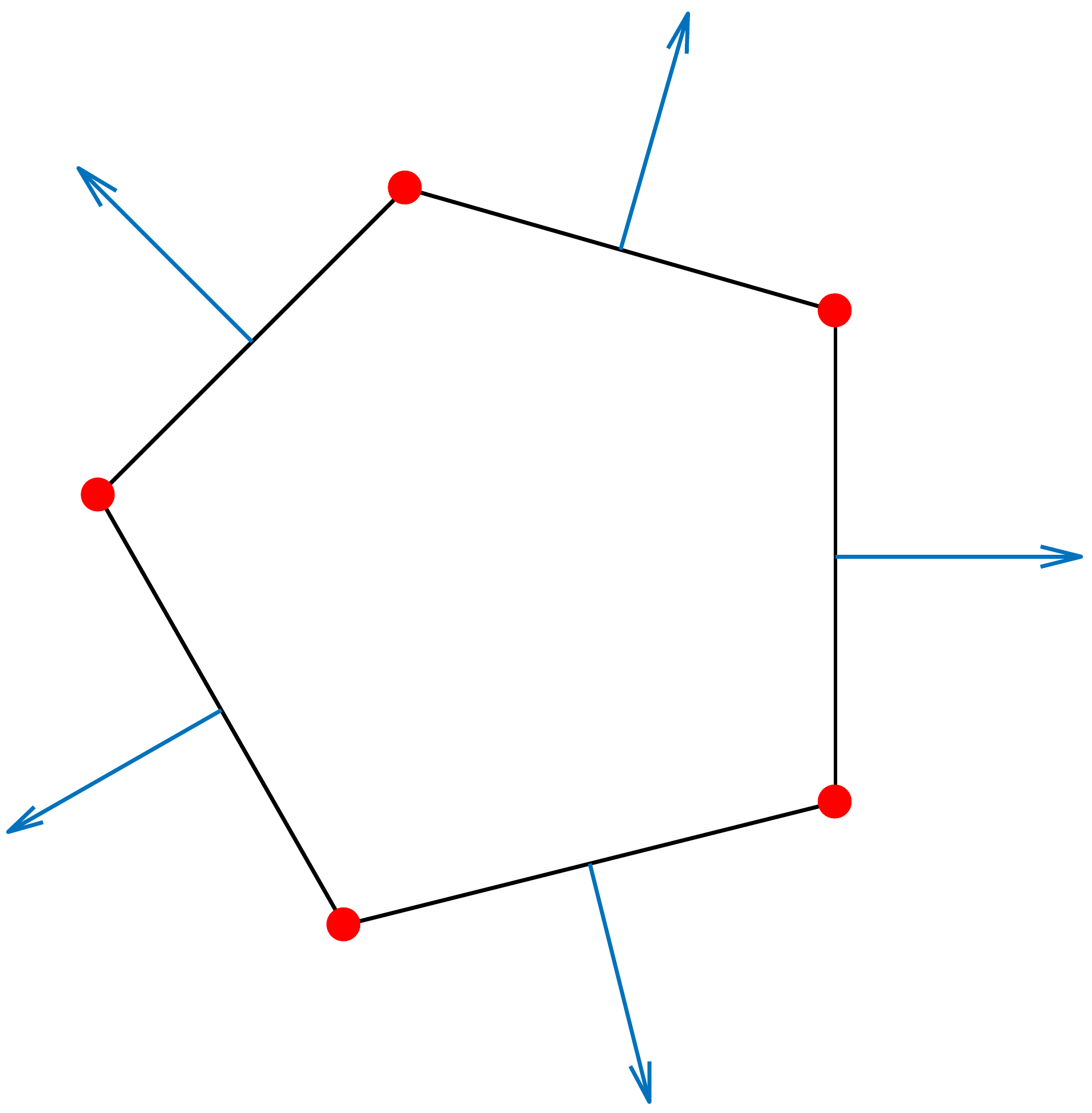}
\qquad \qquad \qquad
\includegraphics[scale=0.18]{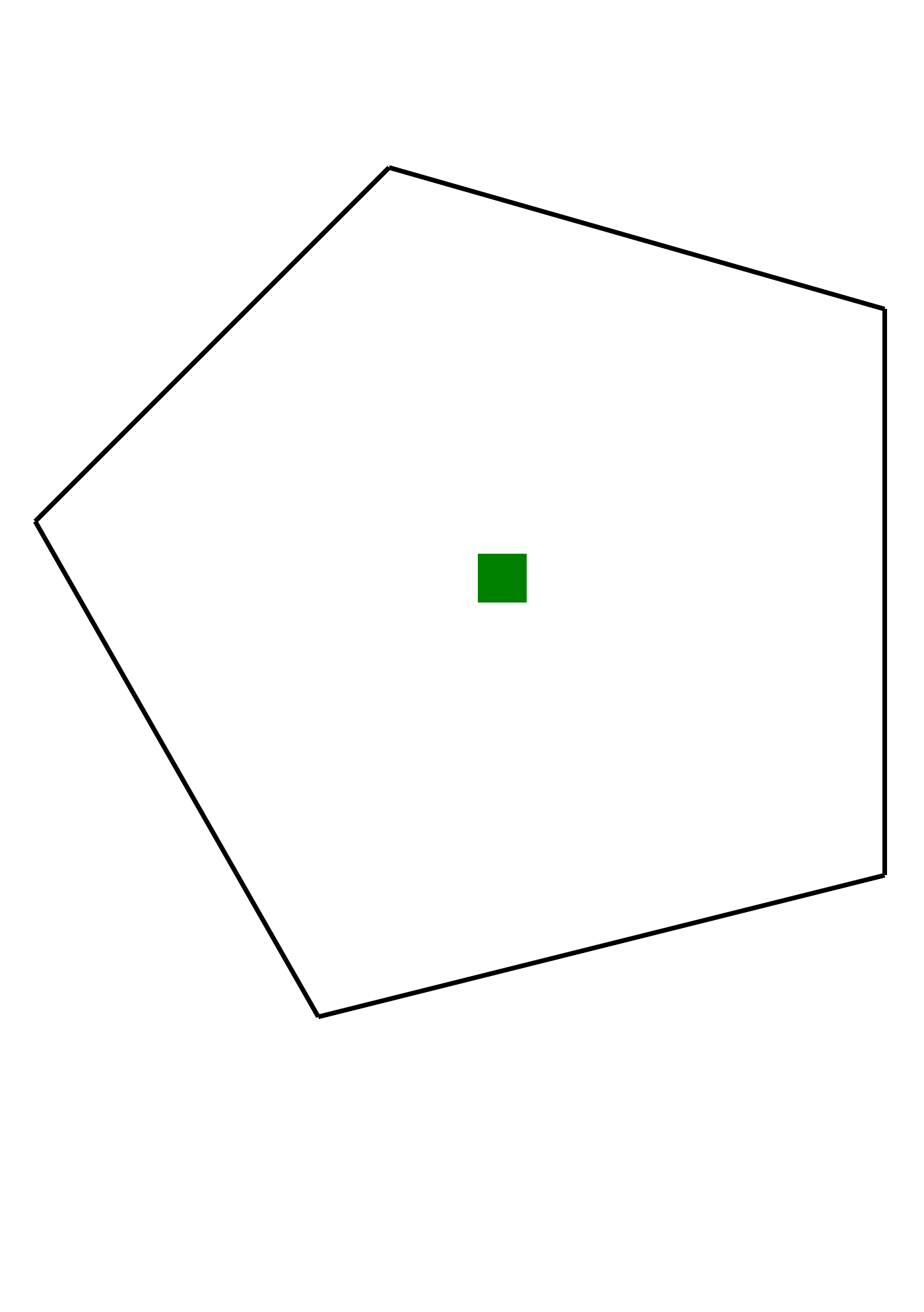}
\caption{{ 
Left: velocity DoFs $\boldsymbol{D_X1}$ denoted by dots, $\boldsymbol{D_X2}$ denoted by arrows.
Right: pressure DoF $\boldsymbol{D_Q}$ denoted by a square.}}
\label{fig:dofsloc}
}
\end{figure}
\noindent We highlight that the degrees of freedom $\boldsymbol{D_X1}$-$\boldsymbol{D_X2}$ are directly related to the piecewise polynomial boundary space $\B_1(\partial E)$: linear
tangent component and quadratic normal component on each edge $e$.

For what concerns pressures, we take the standard finite dimensional space
$Q_h^E := \Pk_{0}(E) $
and the corresponding degree of freedom $\boldsymbol{D_Q}$ is one per element, given by the value of the function on the element.

Finally, we define the global virtual element spaces as
\begin{equation}
\label{eq:V_h}
\VV_h := \{ \boldsymbol{v} \in [H^1(\Omega)]^2  ~ : \quad 
\boldsymbol{v}_{|E} \in \VV_h^E  \quad \text{for all $E \in \Ph$} \}
\end{equation} 
and 
\begin{equation}
\label{eq:Q_h}
Q_h := \{ q \in L^2(\Omega) ~ : \quad q_{|E} \in  Q_h^E \quad \text{for all $E \in \Ph$}\},
\end{equation}
with the obvious associated sets of global degrees of freedom. In view of the degrees of freedom $\boldsymbol{D_X1}$ and $\boldsymbol{D_X2}$ from Proposition \ref{prp:dofs}, a simple computation shows that
\begin{equation*}
\dim \, \VV_h =  2 \,n_V +  n_e
\qquad
\text{and}
\qquad
\dim \, Q_h = n_P,
\end{equation*}
where $n_P$ is the number of elements, $n_e$, $n_V$ is the number of  edges and vertexes in $\Ph$.
We highlight the fundamental property of the proposed virtual elements, namely \vskip -.6cm
\begin{equation}\label{eq:divfree}
{\rm div}\, \VV_h\subseteq Q_h ,
\end{equation}
a key property that will lead to a {\it divergence-free} discrete solution.

\begin{remark}
\label{rm:k2}
In this paper we limit ourselves to present the lowest-order Virtual Element Method $(k=1)$ for the Navier--Stokes equation.
However, in order to compare and validate the performance of the proposed scheme, we also show the results obtained by employing the VEM of order $k=2$ in Section \ref{sec:tests}.
For completeness in Fig. \ref{fig:dofsloc2} we display the DoFs diagram for such VEM as well. For a deeper analysis of higher-order VEMs for the Navier--Stokes equation we refer to \cite{Stokes:divfree,bricks}.
\begin{figure}[!h]
\center{
\includegraphics[scale=0.18]{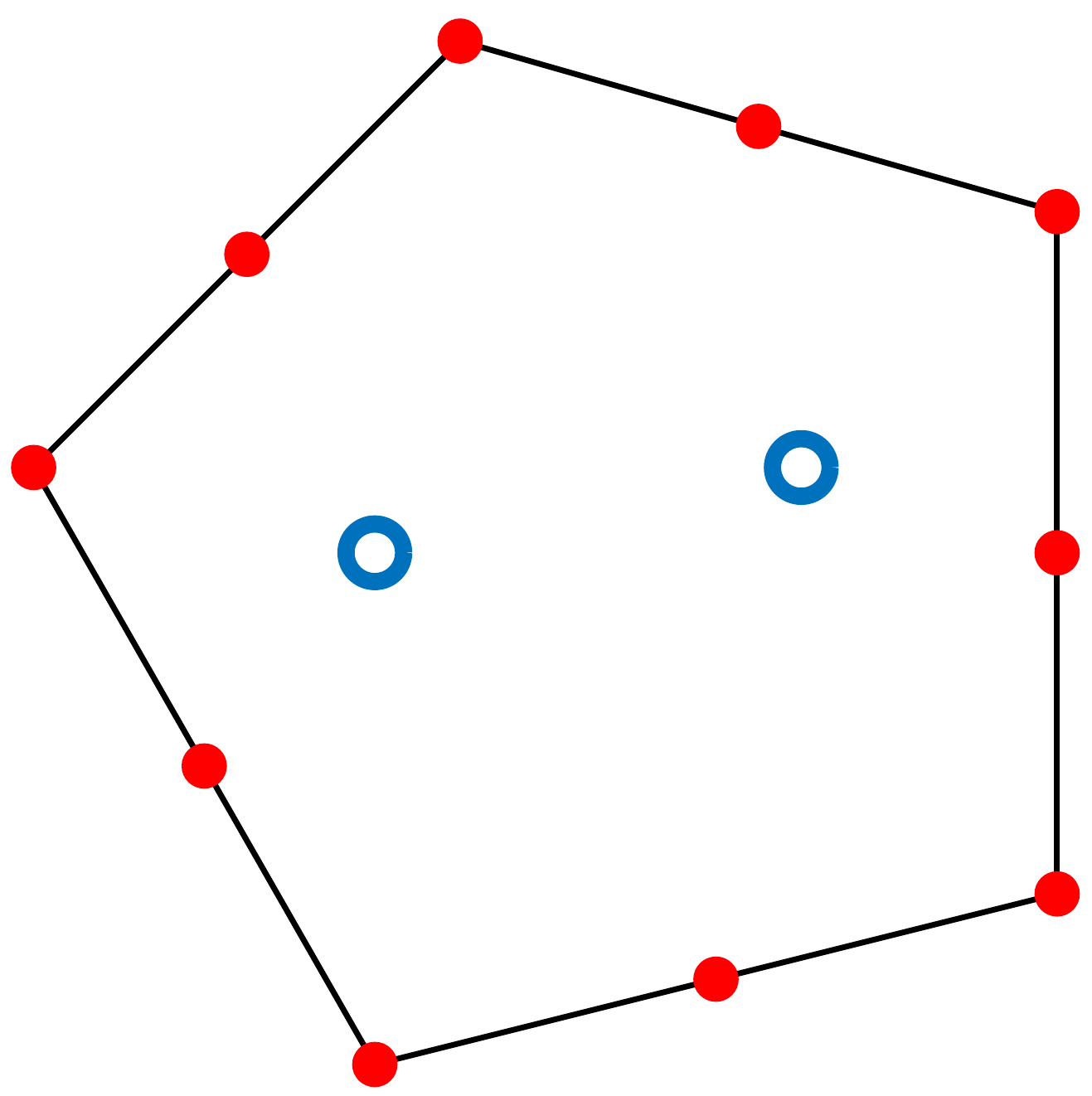}
\qquad \qquad \qquad
\includegraphics[scale=0.18]{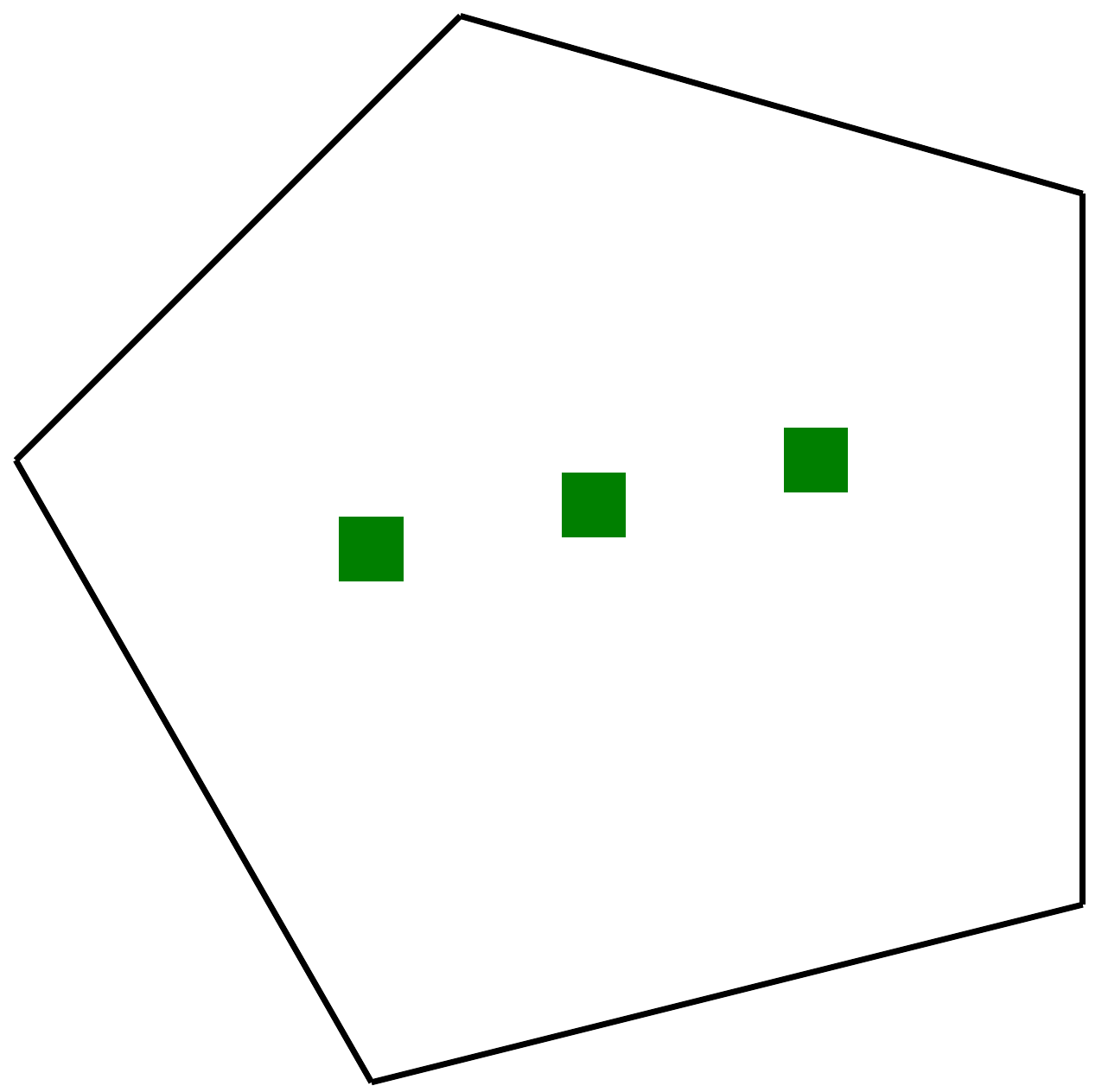}
\caption{{ 
VEM of order $k=2$.
Left: velocity DoFs.
Right: pressure DoFs.}}
\label{fig:dofsloc2}
}
\end{figure}
\end{remark}

\subsection*{Multi-linear forms}

In what follows we briefly recall  the basic  steps in the construction of  discrete versions of the bilinear forms $a(\cdot, \cdot)$ and $b(\cdot, \cdot)$ given in \eqref{eq:forma a} and \eqref{eq:forma b} and trilinear form $c(\cdot; \cdot, \cdot)$ in \eqref{eq:forma c}.
%
%
First of all, we decompose these forms as well as  the norms $\|\cdot\|_{\VV}$, $\|\cdot \|_Q$ into local contributions, by defining 
\[
\begin{aligned}
a (\boldsymbol{u},  \boldsymbol{v}) &=: \sum_{E \in \Ph} a^E (\boldsymbol{u},  \boldsymbol{v}) \qquad &\text{for all $\boldsymbol{u},  \boldsymbol{v} \in \VV$}\,,
\\
b (\boldsymbol{v},  q) &=: \sum_{E \in \Ph} b^E (\boldsymbol{v},  q) \qquad &\text{for all $\boldsymbol{v} \in \VV$ and $q \in Q$}\,,
\\
c(\ww; \, \uu, \vv) &=: \sum_{E \in \Ph} c^E(\ww; \, \uu, \vv) \qquad &\text{for all $\ww, \uu, \vv \in \VV$}\,.
\end{aligned}
\]
and 
\begin{equation*}
\|\boldsymbol{v}\|_{\VV}^2 =: \sum_{E \in \Ph} \|\boldsymbol{v}\|^2_{\VV, E}\,\, \text{for all $\boldsymbol{v} \in \VV$,} \quad \|q\|_Q^2 =: \sum_{E \in \Ph} \|q\|^2_{Q, E} \,\, \text{for all $q \in Q$.}
\end{equation*}
Concerning the form $b(\cdot, \cdot)$, we simply  observe that for all $\boldsymbol{v} \in \VV_h$, $q \in Q_h$ it holds 
\begin{equation}\label{bhform}
b^E(\boldsymbol{v}, q) = 
 \int_E {\rm div} \, \boldsymbol{v} \, q \,{\rm d}E =
 q_{|E} \, \int_{\partial E} \vv \cdot \nn_e \,{\rm d}s \,,
\end{equation}
a quantity that  is exactly computable from the degrees of freedom $\boldsymbol{D_X1}$, $\boldsymbol{D_X2}$ and $\boldsymbol{D_Q}$, therefore we do not introduce any approximation of the bilinear form. 
We now define discrete versions of the forms $a(\cdot, \cdot)$  and  $c(\cdot; \,\cdot, \cdot)$, that need  to be dealt with in a more careful way.
First of all, we note that for an arbitrary triplet $(\ww, \, \uu, \, \vv )\in \big[ \VV_h^E \big]^3$, the quantities $a^E(\boldsymbol{u}, \boldsymbol{v})$ and $c^E(\ww; \, \uu,  \vv)$
are not computable.  
Therefore, following a standard procedure in the VEM framework \cite{volley,Ahmed-et-al:2013}, 
for every element $E \in \Ph$ we introduce the following useful polynomial projections:
\begin{enumerate}[$\bullet$]
\item the $\boldsymbol{H^1}$ \textbf{semi-norm projection} ${\Pi}_{1}^{\Gr^s,E} \colon \VV \to [\Pk_1(E)]^2$, defined for all $\vv \in \VV$ by 
\begin{equation}
\label{eq:Ps_k^E}
\left\{
\begin{aligned}
& \int_E \Gr^s \boldsymbol{q}_1 : \Gr^s (\boldsymbol{v}- \, {\Pi}_{1}^{\Gr^s,E}   \boldsymbol{v}) \, {\rm d} E = 0 &\qquad  \text{for all $\boldsymbol{q}_1 \in [\Pk_1(E)]^2$,} \\
& \int_{\partial E} \boldsymbol{x}^{\perp} \cdot (\boldsymbol{v}- \, {\Pi}_{1}^{\Gr^s,E}   \boldsymbol{v}) \, {\rm d} s = 0 \, , &\qquad \text{where $\boldsymbol{x}^{\perp} := (-y, \, x)^T$,} \\
& \int_{\partial E} (\boldsymbol{v}- \, {\Pi}_{1}^{\Gr^s,E}   \boldsymbol{v}) \, {\rm d} s = \boldsymbol{0} \, ;
\end{aligned}
\right.
\end{equation} 

\item the $\boldsymbol{L^2}$\textbf{-projection for scalar functions} $\Pi_j^{0, E} \colon L^2(E) \to \Pk_j(E)$, given by
\begin{equation}
\label{eq:P0_k^E}
\int_E q_j (v - \, {\Pi}_{j}^{0, E}  v) \, {\rm d} E = 0 \qquad  \text{for all $v \in L^2(E)$  and for all $q_j \in \Pk_j(E)$,} 
\end{equation} 
with obvious extension for vector functions 
$\Pi_j^{0, E} \colon [L^2(\Omega)]^2 \to [\Pk_j(E)]^2$, 
and tensor functions 
$\boldsymbol{\Pi}_{j}^{0, E} \colon [L^2(E)]^{2 \times 2} \to [\Pk_{j}(E)]^{2 \times 2}$ (for $j=0,1$). 
\end{enumerate}
\begin{remark}[projections and computability]\label{prp:projections}
The operator $\PN$ is well defined because the last two conditions in \eqref{eq:Ps_k^E} account for the kernel of $\nabla^s$.
In \cite{Antonietti-BeiraodaVeiga-Mora-Verani:2014,Stokes:divfree} 
it has been shown that
the DoFs $\boldsymbol{D_X}$ are sufficient to compute exactly 
\[
\PN \colon \VV_h^E \to [\Pk_1(E)]^2, \qquad
\PP0 \colon \Gr(\VV_h^E) \to [\Pk_{0}(E)]^{2 \times 2}.
\]
In fact, given any $\vv_h \in \VV_h^E$, we are able to determine the polynomials $\PN \vv_h$ and $\PP0\nabla\vv_h$ using solely the information within the DoFs $\boldsymbol{D_X} \vv_h$.
Furthermore, using a different definition of the virtual space $\VV_h^E$ (sharing the same DoFs), it is possible to compute exactly the $L^2$-projection
$
\P0 \colon \VV_h^E \to [\Pk_1(E)]^2
$
from the DoFs $\boldsymbol{D_X}$.
We avoid this technicality and refer to \cite{Ahmed-et-al:2013,NavierStokes:divfree,Vacca:2018}
for more details.
\end{remark}

In the standard procedure of VEM framework,
we introduce a computable discrete local bilinear form
\begin{equation}
\label{eq:a_h^E} 
a_h^E(\cdot, \cdot) \colon \VV_h^E \times \VV_h^E \to \R
\end{equation}
approximating the continuous form $a^E(\cdot, \cdot)$ by setting
\begin{equation}
\label{eq:a_h^E def}
a_h^E(\boldsymbol{u}, \boldsymbol{v}) := a^E \left(\PN \boldsymbol{u}, \, \PN \boldsymbol{v} \right) + \mathcal{S}^E \left((I - \PN) \boldsymbol{u}, \, (I -\PN) \boldsymbol{v} \right)
\end{equation}
for all $\boldsymbol{u}, \boldsymbol{v} \in \VV_h^E$, where the (symmetric) stabilizing bilinear form $\mathcal{S}^E \colon \VV_h^E \times \VV_h^E \to \R$, satisfies 
\begin{equation}
\label{eq:S^E}
\alpha_* a^E(\boldsymbol{v}, \boldsymbol{v}) \le  \mathcal{S}^E(\boldsymbol{v}, \boldsymbol{v}) \le \alpha^* a^E(\boldsymbol{v}, \boldsymbol{v}) \qquad \text{for all $\boldsymbol{v} \in \VV_h$ s.t. ${\Pi}_{1}^{\Gr^s ,E} \boldsymbol{v}= \boldsymbol{0}$} \,,
\end{equation}
with $\alpha_*$ and $\alpha^*$  positive  constants independent of the element $E$.
It is straightforward to check that Definition~\eqref{eq:Ps_k^E} and property~\eqref{eq:S^E} imply 
\begin{enumerate}[$\bullet$]
\item $\boldsymbol{k}$\textbf{-consistency}: for all $\boldsymbol{q}_1 \in [\Pk_1(E)]^2$ and $\boldsymbol{v} \in \VV_h^E$
\begin{equation}\label{eq:consist}
a_h^E(\boldsymbol{q}_1, \boldsymbol{v}) = a^E( \boldsymbol{q}_1, \boldsymbol{v});
\end{equation}
\item \textbf{stability}:  there exist  two positive constants $\alpha_*$ and $\alpha^*$, independent of $h$ and $E$, such that, for all $\boldsymbol{v} \in \VV_h^E$, it holds
\begin{equation}\label{eq:stabk}
\alpha_* a^E(\boldsymbol{v}, \boldsymbol{v}) \le a_h^E(\boldsymbol{v}, \boldsymbol{v}) \le \alpha^* a^E(\boldsymbol{v}, \boldsymbol{v}).
\end{equation}
\end{enumerate}
Under suitable mesh assumptions \cite{BdV-Lovadina-Russo,Brenner-Sung}, two admissible choices for $\mathcal{S}^E$ that guarantee \eqref{eq:S^E} will be given below in \eqref{eq:dofidofi} and \eqref{eq:trace}.

The global approximate bilinear form $a_h(\cdot, \cdot) \colon \VV_h \times \VV_h \to \R$ is obtained by simply summing the local contributions:
\begin{equation}
\label{eq:a_h}
a_h(\boldsymbol{u}_h, \boldsymbol{v}_h) := \sum_{E \in \Ph}  a_h^E(\boldsymbol{u}_h, \boldsymbol{v}_h) \qquad \text{for all $\boldsymbol{u}_h, \boldsymbol{v}_h \in \VV_h$.}
\end{equation}
For what concerns the approximation of the local trilinear form $c^E(\cdot; \, \cdot, \cdot)$, we set
\begin{equation*}
c_h^E(\ww_h; \uu_h, \vv_h) := \int_E [ (\PP0 \, \Gr \uu_h )  (\P0 \ww_h ) ] \cdot \P0 \vv_h  {\rm d}E 
\quad \text{for all $\ww_h, \uu_h, \vv_h \in \VV_h$}
\end{equation*}
and note that all quantities in the previous formula are computable, in the sense of Remark \ref{prp:projections}. 
As usual we define the global approximate trilinear form by adding the local contributions:
\begin{equation}
\label{eq:c_h}
c_h(\ww_h; \, \uu_h, \vv_h) := \sum_{E \in \Ph}  c_h^E(\ww_h; \, \uu_h, \vv_h), \qquad \text{for all $\ww_h, \uu_h, \vv_h \in \VV_h$.}
\end{equation}
We notice that the form $c_h(\cdot; \, \cdot, \cdot)$ is immediately extendable to the whole $\VV$. 
Moreover we recall from \cite{NavierStokes:divfree} that $c_h(\cdot; \, \cdot, \cdot)$ 
is continuous on $\VV$, uniformly in $h$, i.e., 
there exists a positive constant
$\widehat{C}$, independent of $h$, such that
\begin{equation*}
|c_h(\ww; \, \uu, \vv)| \le
\widehat{C} \, \|\ww\|_{\VV} \|\uu\|_{\VV} \|\vv\|_{\VV},
\qquad \text{for all } \ww, \uu, \vv \in \VV.
\end{equation*}

\subsection*{Linear forms {and boundary data}}
%
The last step consists in constructing computable approximations of the right-hand side {$\boldsymbol{f}$ and boundary data $\boldsymbol{g},\boldsymbol{h}$ in \eqref{eq:fsi_variazionale}.}  We define the approximate load term $\boldsymbol{f}_h$ as \looseness=-1
\begin{equation}
\label{eq:f_h}
\boldsymbol{f}_h := \Pi_{1}^{0,E} \boldsymbol{f} \qquad \text{for all $E \in \Ph$,}
\end{equation}
and consider:
\begin{equation}
\label{eq:right}
(\boldsymbol{f}_h, \boldsymbol{v}_h)_{0,\Omega}  = 
\sum_{E \in \Ph} \int_E \Pi_{1}^{0,E} \boldsymbol{f} \cdot \boldsymbol{v}_h \, {\rm d}E =
 \sum_{E \in \Ph} \int_E \boldsymbol{f} \cdot \Pi_{1}^{0,E}  \boldsymbol{v}_h \, {\rm d}E.
\end{equation}
We observe that \eqref{eq:right} can be computed from $\boldsymbol{D_X}$ for all $\boldsymbol{v}_h \in \VV_h$ (see again Remark \ref{prp:projections}), once a suitable quadrature rule is available for polygonal domains. Details on such an issue can be found for instance in \cite{vianello1,Mousavi-Sukumar,Chin-Lasserre-Sukumar}. 

{If $\boldsymbol{g} \in [C(\overline{\partial\D_D})]^2$, let $\boldsymbol{g}_h$ be the DoFs interpolant on $\partial\D_D$ of $\boldsymbol{g}$, i.e., let
$\boldsymbol{g}_h \cdot \ttt$ be the continuous  piecewise linear approximation of  $\boldsymbol{g} \cdot \ttt$ and $\boldsymbol{g}_h \cdot \nn$ be the continuous  piecewise quadratic approximation of  $\boldsymbol{g} \cdot \nn$. Let $\boldsymbol{h}_h$ be a piecewise polynomial interpolant of $\boldsymbol{h}\in [C(\overline{\partial\D_N})]^2$ that accounts for the effect of quadrature on $\partial\D_N$.} 

\subsection{Virtual Elements for the coupled problem}
\label{sub:3.2}
%
The aim of the present Section is to describe the Virtual Element discretization of Problem \eqref{eq:fsi_variazionale}.
{Here, as later in Section \ref{sec:tests}, we} assume that $\varepsilon=0$, so that the leaflet can be represented by a segment. This is a good approximation for small values of $\varepsilon$, and allows to use a simple and effective mesh-cutting technique in the numerical tests of {Section \ref{sec:tests}.}  

Let $\{\Ph\}_h$ be a sequence of decompositions of the channel $\D$ into general polygonal elements $E$ given independently of the position of the structure $\Gamma$ and  satisfying the mesh assumptions  $\boldsymbol{(A1)}$  and $\boldsymbol{(A2)}$.
From a mesh $\Ph$ in $\D$, we define the mesh 
$\PhG$  in $\Omega$ by cutting with $\Gamma$ the elements of $\Ph$.
In order to have a clear overview of the situation, let us consider the simplest case when $\Ph$ is a square decomposition of $\D$ (see Fig. \ref{fig:meshOmegahG}).

\begin{figure}[!h]
\center
{
\begin{overpic}[scale=0.22]{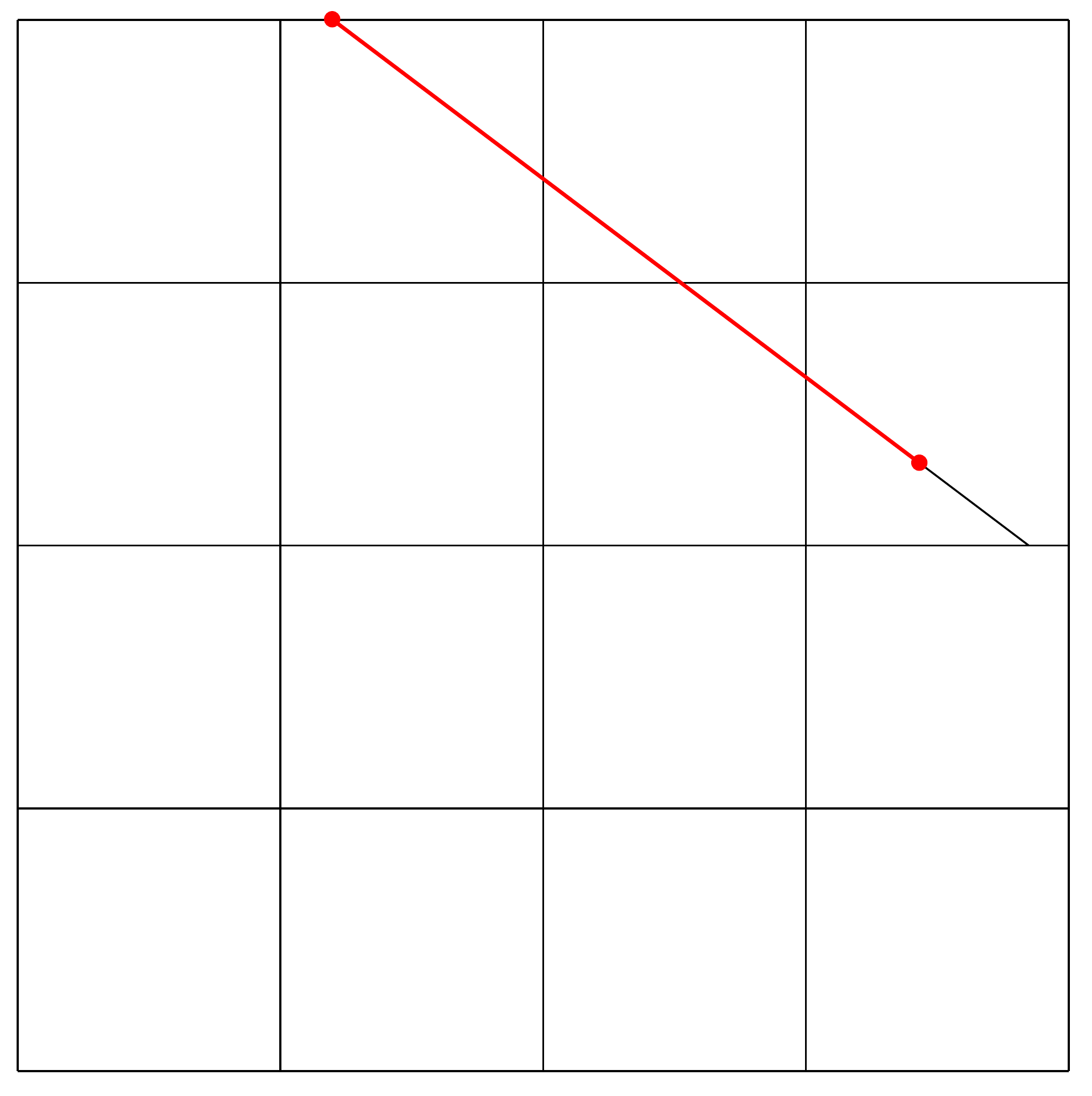} 
\put (28,99) {\large{$\ov$}}
\put (56,60) {\huge{$\Gamma$}}
\put (28,85) {\large{$E_1$}}
\put (40,91) {\large{$E_2$}}
\put (72,51) {\large{$E_{N-1}$}}
\put (85,68) {\large{$E_{N}$}}
\put (28,28) {\huge{$\Omega$}}
\put (85,59) {\large{$T$}}
\end{overpic}
\caption{Example of mesh $\PhG$ obtained from a square decomposition $\Ph$ of the channel by cutting  the elements across the straight segment $\Gamma$.}
\label{fig:meshOmegahG}
}
\end{figure}

Depending on the position of the cut, from a single square we may generate two sub-polygons, possibly violating assumptions $\boldsymbol{(A1)}$  and $\boldsymbol{(A2)}$.
We observe that also in this simple situation, starting from a square decomposition, we need to handle a general polygonal mesh containing for instance pentagons, therefore the virtual element approach turns out to be particularly appropriate in this context.
We stress that in the presence of an internal cut, i.e.,  if the tip $T$ of $\Gamma$ does not belong to an edge of the underlying decomposition $\Ph$, we extend the segment $\Gamma$ until we obtain the full cut of the element containing $T$. In that case, with reference to Fig. \ref{fig:meshOmegahG}, the elements $E_{N-1}$ and $E_{N}$ have to be considered as a quadrilateral and a hexagon, respectively, since the prolongation of $\Gamma$ is considered as a separate edge.

\begin{remark}[{cracked polygon}]
\label{rm:cutting}
An alternative choice to treat the case of an internal cut is {to consider the polygon containing the tip of $\Gamma$  as a ``cracked'' element rather than prolonging the leaflet.} Indeed the virtual element technology can handle also this type of polygons. A preliminary numerical investigation of the schemes obtained with the ``leaflet prolongation'' and the ``cracked polygon'' strategies {revealed} that the former approach appears more robust in terms of behaviour of the discrete torque functional $\tau_h(\theta)$  defined in  \eqref{eq:jh}. Therefore in the following we focus only on the first strategy.
\end{remark}

{We now discuss the discretization of problem \eqref{eq:fsi_variazionale}. In view of \eqref{eq:V_h} and \eqref{eq:Q_h}, the discrete spaces subordinate to the partition $\PhG$ are} \looseness=-1
\begin{gather*}
\VV_h := \{ \boldsymbol{v} \in [H^1(\Omega)]^2  ~ :  \quad 
\boldsymbol{v}_{|E} \in \VV_h^E  \quad \text{for all $E \in \PhG$} \},
\\
Q_h := \{ q \in L^2(\Omega) ~ : \quad q_{|E} \in  Q_h^E \quad \text{for all $E \in \PhG$}\},
\end{gather*}
{and the virtual discretization of the affine manifold in \eqref{eq:spazi continui} reads}
\begin{equation}
\begin{aligned}
\label{eq:spazi discreti}
&\VVGh^{\boldsymbol{g}} := \left\{ \vv \in \VV_h ~ : 
\quad\vv_{|_{\partial \Omega_D}} = \boldsymbol{g}_h \,,
\quad \vv_{|_{\Gamma}} = \boldsymbol{0}  \right\}  \, ,
\end{aligned}
\end{equation}
{where $\boldsymbol{g}_h$ is the VEM interpolant of $\boldsymbol{g}$.}
In  light of Proposition \ref{prp:dofs} 
and the definitions above, 
the linear operator $\boldsymbol{\widehat{D_X}}$ constitutes a set of DoFs for the virtual {space $\VVGh^{\boldsymbol{0}}$:}
%
%
for any element $E \in \PhG$ we consider 
\begin{itemize}
\item ${\boldsymbol{\widehat{D_X1}}}$:  the values of $\boldsymbol{v}$ at the vertices of the element that do not belong to $\Gamma \cup \partial \Omega_D$,
\item ${\boldsymbol{\widehat{D_X2}}}$: the values of the normal components $\vv \cdot \nn_e$
at the midpoint of each edge of $E$ that is not contained in $\Gamma \cup \partial \Omega_D$.
\end{itemize}
In Fig. \ref{fig:dofs_hat} we display an example of such DoFs for some sample elements adjacent to $\Gamma$. We observe that no DoFs are given on $\Gamma$, since homogeneous Dirichlet conditions are enforced therein. 
\begin{figure}[!h]
\center{
\begin{overpic}[scale=0.22]{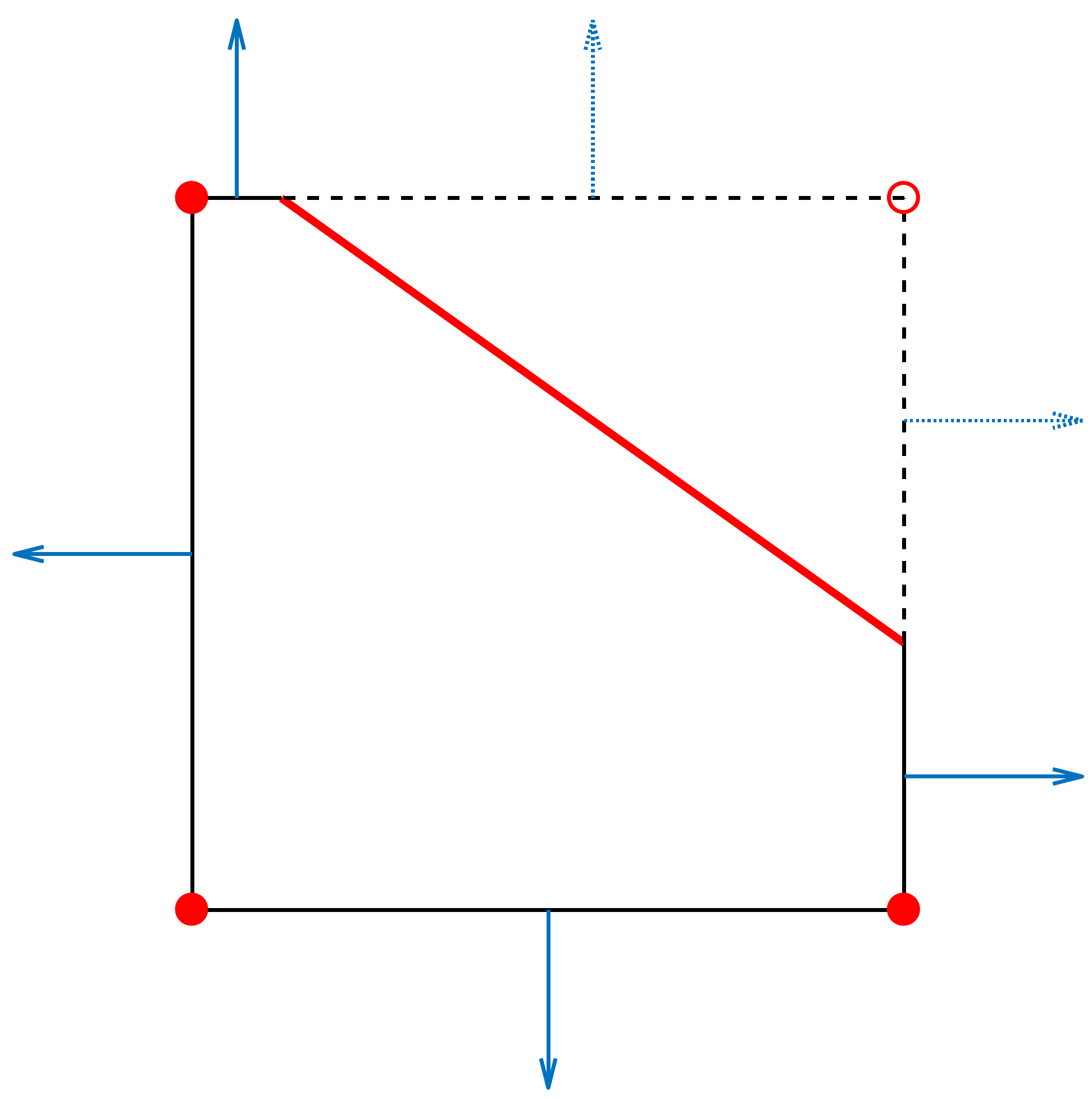}
\put (55,42) {\huge{$\Gamma$}}
\put (30,31) {\huge{$E_{i-1}$}}
\put (63,65) {\huge{$E_i$}}
\end{overpic}
\qquad \qquad 
\begin{overpic}[scale=0.22]{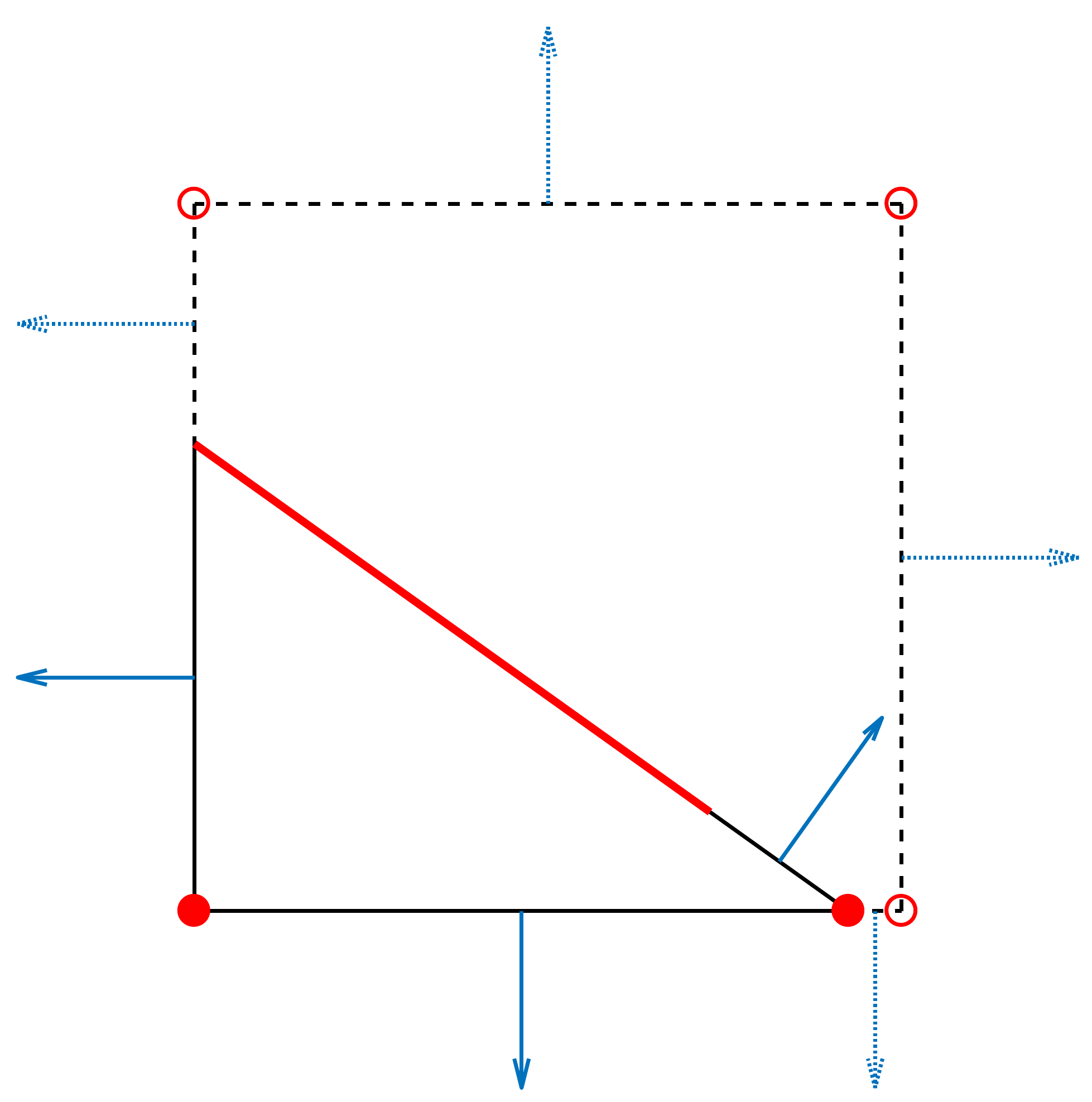} 
\put (65,27) {\large{$T$}}
\put (45,42.5) {\huge{$\Gamma$}}
\put (20,25) {\huge{$E_{N-1}$}}
\put (58,60) {\huge{$E_N$}}
\end{overpic}
\caption{Degrees of freedom: We denote ${\boldsymbol{\widehat{D_X1}}}$ by dots {and} ${\boldsymbol{\widehat{D_X2}}}$ by arrows.
Leaflet $\Gamma$ cutting through an element $E$ (left) and prolongation of $\Gamma$ to the boundary of $E$ (right). The letter $T$ above indicates the position of the tip of the leaflet.
}}
\label{fig:dofs_hat}
\end{figure}

We denote by $\EG^h \in \VV_h$ the function defined by the following DoFs values:
{
\begin{equation}
\label{eq:EGh}
\begin{aligned}
{\EG^h}(\pv)&=
\left \{
\begin{aligned}
& |\pv - \ov| \, \e_{\theta}^\perp & \,\,  & \text{if $\pv \in \Gamma$,}
\\
& \boldsymbol{0}        & \,\,  & \text{otherwise}
\end{aligned}  
\right .
\quad\qquad \text{$\pv$ mesh vertex,} 
\\
{\EG^h}(\pv) \cdot \nn_e &=
\left \{
\begin{aligned}
& |\pv - \ov| \, \e_{\theta}^\perp\cdot \nn_e & \,\,  & \text{if $\pv \in \Gamma$,}
\\
& 0       & \,\,  & \text{otherwise}
\end{aligned}  
\right .
\quad  \text{$\pv$ midpoint of mesh edge $e$.} 
\end{aligned}
\end{equation}
}
Notice  that \eqref{eq:EG} and \eqref{eq:EGh} imply 
\begin{equation}
\label{eq:EGh_b}
\EG^h|_{\Gamma} = \EG|_{\Gamma} = r \,  \e_{\theta}^\perp = \Fi \,.
\end{equation}

We are now ready to state the proposed discrete problem. Referring to~\eqref{eq:spazi discreti}, \eqref{eq:Q_h}, \eqref{eq:EGh}, ~\eqref{eq:a_h},  ~\eqref{eq:c_h} and ~\eqref{bhform}, we consider the \textbf{virtual element problem}: find 
$\theta_h \in I_{\epsilon_0}$ 
and $(\uu_h, \, p_h) \in \VVGh^{\boldsymbol{g}} \times Q_h$, such that
\begin{equation}
\label{eq:fsi_vem}
\!\! \left\{
\begin{aligned}
\! \nu \, a_h(\uu_h, \vv_h \! + \! \sigma \,\EG^h) & +
c_h(\uu_h; \, \uu_h, \vv_h \!+ \! \sigma \, \EG^h) +  
b(\vv_h \! + \! \sigma \, \EG^h, p_h) + \sigma \, \kappa (\theta_h)    & & \\
& = (\ff_h, \vv_h + \sigma \, \EG^h)_{0, \Omega} + 
{(\boldsymbol{h}_h,  \vv_h)_{0, \partial\mathcal{D}_N}} \, , & &
\\
 b(\uu_h, q_h) &= 0 \, , & &
\end{aligned}
\right. 
\end{equation}
{for all $\sigma \in \R$ and $(\vv_h,q_h)\in \VVGh^{\boldsymbol{0}}\times Q_h$.
Finally, in view of \eqref{eq:divfree} and what is observed in \cite{Stokes:divfree,NavierStokes:divfree}, the last line in \eqref{eq:fsi_vem} implies} that the velocity solution $\uu_h$ is pointwise divergence-free.

\subsection{Discrete torque functional}
\label{sub:3.3}

In the present section, in accordance with Section \ref{sect:shape}, we modify \eqref{eq:fsi_vem} to get the discretization of the (linear) Stokes model \eqref{eq:Stokes}:
find 
$\theta_h \in I_{\epsilon_0}$ 
and $(\uu_h, \, p_h) \in \VVGh^{\boldsymbol{g}} \times Q_h$, such that
for all {$\sigma \in \R$ and $(\vv_h,q_h) \in \VVGh^{\boldsymbol{0}} \times Q_h$}
\begin{equation}
\label{eq:fsi_vem-stokes}
\left\{
\begin{aligned}
 \nu \, a_h(\uu_h, \vv_h + \sigma \,\EG^h) &+
b(\vv_h + \sigma \, \EG^h, p_h) +  
\sigma \, \kappa (\theta_h)
 & &
\\
& 
=(\ff_h, \vv_h + \sigma \, \EG^h)_{0, \Omega} + 
{(\boldsymbol{h}_h,  \vv_h)_{0, \partial\mathcal{D}_N}} \, ,  & &\\
 b(\uu_h, q_h) &= 0\, . & &
\end{aligned}
\right. 
\end{equation}

In order to study the solvability of the discrete Problem \eqref{eq:fsi_vem-stokes}, it is convenient to introduce the discrete torque functional $\theta \mapsto \tau_h(\theta)$. {Before doing so, we recall an equivalent expression for the continuous torque functional $\theta\mapsto\tau(\theta)$
\begin{equation}\label{eq:weak-torque2}
\tau(\theta) = -\nu \, a(\uu, \EG) - b(\EG, p) + (\ff, \EG)_{0,\Omega}\, ,
\end{equation}
which hinges on \eqref{eq:torque}, \eqref{eq:balance} and \eqref{eq:weak-torque}.
Similarly, if $(\uu_h, \, p_h) \in \VVGh^{\boldsymbol{g}} \times Q_h$ is the solution of the following discrete Stokes equations for a given $\Gamma=\Gamma(\theta)$
\begin{equation}
\label{eq:vem-stokes-fsi}
\left\{
\begin{aligned}
 \nu \, a_h(\uu_h, \vv_h) +
b(\vv_h, p_h) &= 
(\ff_h, \vv_h)_{0, \Omega} + 
(\boldsymbol{h}_h,  \vv_h)_{0, \partial\mathcal{D}_N}\,, \\
 b(\uu_h, q_h) &= 0 \, , & &
\end{aligned}
\right. 
\end{equation}
for all $(\vv_h,q_h) \in \VVGh^{\boldsymbol{0}} \times  Q_h$, we define the discrete torque $\theta \mapsto \tau_h(\theta)$ to be
\begin{equation}\label{eq:jh}
\tau_h(\theta) := 
-\nu \, a_h(\uu_h, \, \EG^h) - 
b(\EG^h, \, p_h)
+ (\ff_h, \, \EG^h)_{0, \Omega} \,,
\end{equation}
Taking $\vv_h=0, \sigma=1$ in \eqref{eq:fsi_vem-stokes}, we obtain the leaflet momentum balance
\begin{equation}\label{eq:h-cond}
\kappa (\theta_h) = \tau_h(\theta_h) \, ,
\end{equation}
which is the discrete analogue of \eqref{eq:leaflet_primale}.
In contrast to $\tau$, we will see in Sect. \ref{sec:tests} that the functional $\tau_h$ need not be continuous with respect to $\theta$, because a small change in the position of $\Gamma$ may induce a jump in the number of degrees of freedom that affect the definition of the stabilization form $\mathcal{S}^E$. However, if jumps in $\tau_h$ exist, they should tend to $0$ as the mesh parameter $h$ tends to $0$. This will be elucidated next.

\smallskip
We next quantify the torque error $\tau(\theta)-\tau_h(\theta)$ for any fixed $\theta$. To this end, we need the approximation errors $\mathcal{E}(\vv):=\mathcal{I}(\vv) + \mathcal{P}(\vv)$ for any $\vv \in \VV$, $\mathcal{F}(\ff)$ for $\ff \in [L^2(\Omega)]^2$,  $\mathcal{G}(\boldsymbol{g})$ for $\boldsymbol{g} \in [H^{\frac32}(\partial\D_\text{in})]^2$, and $\mathcal{H}(\boldsymbol{h})$ for $\boldsymbol{h}\in [H^1(\partial\D_N)]^2$, where
\[
\mathcal{I}(\vv) := \min_{\vv_h \in \VV_h} \|\vv - \vv_h\|_{\VV} \,,
\quad
 {\mathcal{P}(\vv) := \min_{\vv_{\pi} \in [\Pk_{1,h}]^2} \left(\sum_{E \in \PhG} \|\vv - \vv_{\pi}\|^2_{\VV,E} \right)^{1/2}},
\]
$[\Pk_{1,h}]^2$ denotes the space of piecewise polynomials of degree one over $\PhG$ and
\[
\mathcal{F}(\ff) := \|\ff - \ff_h \|_{\VV^*} \, ,
\quad
\mathcal{G}(\boldsymbol{g}) := \|\boldsymbol{g} - \boldsymbol{g}_h \|_{[H^{1/2}(\partial\D_D)]^2} \, ,
\quad    
\mathcal{H}(\boldsymbol{h}) := \|\boldsymbol{h} - \boldsymbol{h}_h\|_{[L^2(\partial\D_N)]^2} \, .
\]
The following energy error estimate, that takes into account also the influence of the boundary data approximation, is a trivial extension of well known results in the literature \cite{Antonietti-BeiraodaVeiga-Mora-Verani:2014,Stokes:divfree,NavierStokes:divfree}
\begin{equation}\label{eq:error-vem}
\|\uu - \uu_h\|_{[H^1(\Omega)]^2} \lesssim \mathcal{E}(\uu) + \mathcal{F}(\ff) + \mathcal{G}(\boldsymbol{g}) + \mathcal{H}(\boldsymbol{h}).
\end{equation}

\begin{proposition}[approximation of torque]\label{prp:intermedia}
For a given $\theta \in I_{\epsilon_0}$, let $\Gamma=\Gamma(\theta)$ and $(\uu,p)$ and $(\zz,q)$ be the solutions of the Stokes problem \eqref{eq:Stokes} and adjoint problem \eqref{eq:adjoint-pb}.
Let $\tau(\theta)$ and $\tau_h(\theta)$ be the continuous and discrete torque functionals satisfying \eqref{eq:weak-torque2} and \eqref{eq:jh}, respectively. Then the following error estimate holds
\begin{equation}
\label{eq:stima}
\big|\tau(\theta) - \tau_h(\theta) \big| \lesssim
\mathcal{E}(\uu) \mathcal{E}(\zz) +  \big(\mathcal{F}(\ff) + \mathcal{G}(\boldsymbol{g}) +\mathcal{H}(\boldsymbol{h}) \big) \, \big(\mathcal{E}(\zz) + \|\zz\|_{\VV} \big)  \, .
\end{equation}
\end{proposition}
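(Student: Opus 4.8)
The plan is to reproduce, at the discrete level, the adjoint representation \eqref{eq:expr2-J} of the torque and then compare the two. First I would introduce the \emph{discrete adjoint} pair $(\zz_h,q_h)$, with $\zz_h-\EG^h\in\VVGh^{\boldsymbol{0}}$ and $q_h\in Q_h$, solving the VEM discretisation of \eqref{eq:adjoint-pb},
\[
\nu\,a_h(\zz_h,\vv_h)+b(\vv_h,q_h)=0,\qquad b(\zz_h,s_h)=0\qquad\text{for all }(\vv_h,s_h)\in\VVGh^{\boldsymbol{0}}\times Q_h,
\]
whose Dirichlet datum $\boldsymbol{\Phi}$ on $\Gamma$ is reproduced \emph{exactly}, since $\EG^h|_\Gamma=\boldsymbol{\Phi}$ by \eqref{eq:EGh_b}. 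Writing $\zz_h=\EG^h+\zz_{h,0}$, testing the discrete primal Stokes equations \eqref{eq:vem-stokes-fsi} with $\zz_{h,0}\in\VVGh^{\boldsymbol{0}}$, and using $b(\zz_h,p_h)=0$ (because $p_h\in Q_h$) together with $\EG^h|_{\partial\D}=\boldsymbol{0}$ (so that $\EG^h$ drops out of the $\partial\D_N$ pairing), a short rearrangement yields the discrete analogue of \eqref{eq:expr2-J},
\[
\tau_h(\theta)=-\nu\,a_h(\uu_h,\zz_h)+(\ff_h,\zz_h)_{0,\Omega}+(\boldsymbol{h}_h,\zz_h)_{0,\partial\D_N}.
\]

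Subtracting this from $\tau(\theta)=-\nu\,a(\uu,\zz)+(\ff,\zz)_{0,\Omega}+(\boldsymbol{h},\zz)_{0,\partial\D_N}$ (which is \eqref{eq:expr2-J}) and setting $e_{\uu}=\uu-\uu_h$, $e_{\zz}=\zz-\zz_h$, I would split the leading quantity as
\[
a(\uu,\zz)-a_h(\uu_h,\zz_h)=\big(a(e_{\uu},\zz)+a(\uu,e_{\zz})-a(e_{\uu},e_{\zz})\big)+\big(a(\uu_h,\zz_h)-a_h(\uu_h,\zz_h)\big).
\]
For the consistency term I would insert the piecewise-linear best approximations $\uu_\pi,\zz_\pi$ of $\uu,\zz$ and use the $k$-consistency \eqref{eq:consist} together with the symmetry of $a^E$ and $a_h^E$: this collapses the elementwise contribution to $(a^E-a_h^E)(\uu_h-\uu_\pi,\zz_h-\zz_\pi)$, which by continuity of $a^E$ and the stability \eqref{eq:stabk} is bounded, after a Cauchy--Schwarz over the elements, by $(\|e_{\uu}\|_{\VV}+\mathcal{P}(\uu))(\|e_{\zz}\|_{\VV}+\mathcal{P}(\zz))$.

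The Galerkin part is dealt with through the continuous equations. Since $\zz$ and $\zz_h$ carry the same, exactly reproduced, boundary values, $e_{\zz}\in\VVG^{\boldsymbol{0}}$; testing the continuous primal momentum equation with $e_{\zz}$ and using $\dd\,e_{\zz}=0$ (both velocities being pointwise divergence-free) gives $\nu\,a(\uu,e_{\zz})=(\ff,e_{\zz})_{0,\Omega}+(\boldsymbol{h},e_{\zz})_{0,\partial\D_N}$, so that after cancellations the torque error reduces to $-\nu\,a(e_{\uu},\zz)+\nu\,a(e_{\uu},e_{\zz})-\nu\big(a(\uu_h,\zz_h)-a_h(\uu_h,\zz_h)\big)+(\ff-\ff_h,\zz_h)_{0,\Omega}+(\boldsymbol{h}-\boldsymbol{h}_h,\zz_h)_{0,\partial\D_N}$. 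The two data terms are controlled by $\big(\mathcal{F}(\ff)+\mathcal{H}(\boldsymbol{h})\big)\|\zz_h\|_{\VV}\lesssim\big(\mathcal{F}(\ff)+\mathcal{H}(\boldsymbol{h})\big)\big(\|\zz\|_{\VV}+\mathcal{E}(\zz)\big)$. For $a(e_{\uu},\zz)$ I would split $e_{\uu}=\widehat{e}_{\uu}+\boldsymbol{\delta}_g$ with $\widehat{e}_{\uu}\in\VVG^{\boldsymbol{0}}$ and $\boldsymbol{\delta}_g$ a stable lifting of $\boldsymbol{g}-\boldsymbol{g}_h$, test the adjoint equation with $\widehat{e}_{\uu}$, and use $\dd\,\widehat{e}_{\uu}=-\dd\,\boldsymbol{\delta}_g$ together with the uniform bound $\|q\|_{Q}\lesssim1$ of Corollary \ref{cor:energy-uz} to obtain $|a(e_{\uu},\zz)|\lesssim\mathcal{G}(\boldsymbol{g})\|\zz\|_{\VV}$. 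Finally $|a(e_{\uu},e_{\zz})|\lesssim\|e_{\uu}\|_{\VV}\|e_{\zz}\|_{\VV}$, and the energy estimate \eqref{eq:error-vem} gives $\|e_{\uu}\|_{\VV}\lesssim\mathcal{E}(\uu)+\mathcal{F}(\ff)+\mathcal{G}(\boldsymbol{g})+\mathcal{H}(\boldsymbol{h})$ for the primal and $\|e_{\zz}\|_{\VV}\lesssim\mathcal{E}(\zz)$ for the adjoint, whose load and Neumann data vanish and whose Dirichlet datum is reproduced exactly. Collecting all contributions produces exactly \eqref{eq:stima}.

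The main obstacle is the first step: obtaining a discrete torque identity that is structurally parallel to the continuous adjoint representation \eqref{eq:expr2-J}. It is precisely here that the construction of $\EG^h$ — vanishing DoFs on $\partial\D$ and exact reproduction of $\boldsymbol{\Phi}$ on $\Gamma$ via \eqref{eq:EGh_b} — and the pointwise divergence-free character of the VEM velocities are indispensable; once this identity is available, the remainder is the standard dual-weighted-residual accounting of Galerkin error times adjoint error, VEM consistency, and data-approximation errors. A secondary technical point is the treatment of the nonhomogeneous Dirichlet datum $\boldsymbol{g}$ on $\partial\D_D$ through a stable lifting, which is what produces the $\mathcal{G}(\boldsymbol{g})$ contributions.
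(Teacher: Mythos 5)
Your proof is correct and follows essentially the same route as the paper's: both replace $\EG$ and $\EG^h$ in the torque formulas \eqref{eq:weak-torque2} and \eqref{eq:jh} by the VEM adjoint solution $\zz_h$ (exploiting \eqref{eq:EGh_b} and the pointwise divergence-free property so that the pressure terms drop), and then perform the same consistency/Galerkin/data-approximation accounting. The only differences are cosmetic --- you keep the continuous adjoint $\zz$ in the continuous torque and eliminate $a(\uu,\zz-\zz_h)$ via the primal equation, and you treat the $\boldsymbol{g}-\boldsymbol{g}_h$ mismatch with a generic lifting plus the uniform pressure bound of Corollary \ref{cor:energy-uz}, whereas the paper tests with $\zz_h$ directly in both formulas and uses a divergence-free Stokes lifting $\ww$ of $\boldsymbol{g}-\boldsymbol{g}_h$ --- but both yield exactly the terms in \eqref{eq:stima}.
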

\begin{proof}
We start with a simple but crucial observation: the function $\EG$ in \eqref{eq:weak-torque2} can be replaced by any function $\vv\in\VV$ with the same Dirichlet boundary condition as $\EG$ on $\Gamma\cup\partial\D_D$ because $\EG-\vv\in\VV_\Gamma^{\boldsymbol{0}}$ is an admissible test function for the weak Stokes equation for $(\uu,p)$; one only needs to add the Neumann boundary term $(\boldsymbol{h}, \, \cdot)_{0, \partial\mathcal{D}_N}$ that in \eqref{eq:weak-torque2} is missing since $\EG$ vanishes on $\partial\mathcal{D}_N$.
The same comment applies to \eqref{eq:jh}. To choose $\vv$, we recall that $(\zz,q)\in\VV \times Q$ solves the adjoint problem \eqref{eq:adjoint-pb}, whose weak form reads
\begin{equation}\label{eq:adjoint-weak}
\left\{
\begin{aligned}
    \nu a(\vv,\zz) + b(\vv,q) &= 0 \qquad\textrm{for all } \vv\in\VV_\Gamma^{\boldsymbol{0}} \, ,
    \\
    b(\zz,s) &= 0 \qquad \textrm{for all } s\in Q.
\end{aligned}
\right. 
\end{equation}
Let $(\zz_h,q_h)\in\VV_h\times Q_h$ be the corresponding VEM counterpart of Section \ref{sub:3.1}, and note that $\zz_h = \EG = \Phi$ on $\Gamma$, $\zz_h=\EG=\boldsymbol{0}$ on $\partial\D_D$ and $\dd \, \zz_h=0$ in $\Omega$. We thus choose $\vv=\zz_h\in\VV_h\subset\VV$ to write
\begin{equation}
\label{eq:jh-1}
\begin{split}
\tau_h(\theta) &= -\nu \, a_h(\uu_h, \, \zz_h) - 
b(\zz_h, \, p_h)
+ (\ff_h, \, \zz_h)_{0, \Omega} + 
(\boldsymbol{h}_h, \, \zz_h)_{0, \partial\mathcal{D}_N},
\\
\tau(\theta) &= -\nu \, a(\uu, \, \zz_h) - 
b(\zz_h, \, p)
+ (\ff, \, \zz_h)_{0, \Omega} + 
(\boldsymbol{h}, \, \zz_h)_{0, \partial\mathcal{D}_N} \, .
\end{split}
\end{equation}
Therefore, we obtain the error decomposition $\tau(\theta)-\tau_h(\theta) = I + II + III$ with
\begin{align*}
  I & := -\nu a(\uu-\uu_h,\zz_h) - b(\zz_h,p-p_h) \, ,
  \\
  II & := - \nu \big[ a(\uu_h,\zz_h) - a_h(\uu_h, \zz_h)  \big] \, ,
  \\
  III & :=  \big[(\ff,\zz_h)_{0,\Omega} - (\ff_h,\zz_h)_{0,\Omega}\big]
  + \big[ (\boldsymbol{h}, \, \zz_h)_{0, \partial\mathcal{D}_N} - (\boldsymbol{h}_h, \, \zz_h)_{0, \partial\mathcal{D}_N} \big] \, .
\end{align*}
The rest of the proof consists of estimating these three terms separately.

\smallskip\noindent
1. {\it Estimate of $I$:} We utilize that $b(\zz_h,p-p_h)=0$, because $\dd \zz_h=0$, to deduce
$$
I = -\nu a(\uu-\uu_h,\zz_h).
$$
In view of \eqref{eq:adjoint-weak}, we would like to exploit the fact that $a(\vv,\zz)=0$ for $\vv\in\VV_\Gamma^{\boldsymbol{0}}$ and $\dd \, \vv = 0$, but we cannot take $\vv=\uu-\uu_h$ because this $\vv\ne\boldsymbol{0}$ on $\partial\D_D$. Let $(\ww,s)\in\VV\times Q$ be the solution of the Stokes problem \eqref{eq:Stokes} with data $\ff = \boldsymbol{h} = \boldsymbol{0}$ and Dirichlet condition $\ww=\boldsymbol{g} - \boldsymbol{g}_h$ on $\partial\D_D$ and $\ww=\boldsymbol{0}$ on $\Gamma$. Therefore, we have
\[
\|\ww\|_{[H^1(\Omega)]^2} \lesssim \|\boldsymbol{g} - \boldsymbol{g}_h\|_{[H^{1/2}(\partial\Omega_D)]^2} = \mathcal{G}(\boldsymbol{g}),
\]
according to \eqref{eq:energy-bound}. Since $\vv = \uu-\uu_h - \ww \in \VV_\Gamma^{\boldsymbol{0}}$ and $\dd\,\vv=0$, \eqref{eq:adjoint-weak} implies
\[
\nu a(\uu-\uu_h,\zz) - \nu a(\ww,\zz) = 0,
\]
which added to $I$ yields
$
I = \nu a(\uu-\uu_h,\zz-\zz_h) - \nu a(\ww,\zz).
$
We observe that \eqref{eq:error-vem} is valid for both $\uu$ and $\zz$, the latter without data approximation because $\Phi\in\VV_h$. Consequently, the Cauchy-Schwarz inequality gives
\begin{equation*}\label{eq:I}
  | I | \lesssim \mathcal{E}(\uu) \, \mathcal{E}(\zz)
  + \big( \mathcal{F}(\ff) + \mathcal{G}(\boldsymbol{g}) +\mathcal{H}(\boldsymbol{h}) \big) \big(\mathcal{E}(\zz) + \|\zz\|_{\VV} \big).
\end{equation*}

\noindent
2. {\it Estimate of $II$:} 
We decompose $II$ elementwise and write for any $E\in\PhG$
\begin{align*}
  \nu^{-1} II^E :&= a^E(\uu_h,\zz_h) - a^E_h(\uu_h,\zz_h)
  \\
  & = a^E(\uu_h,\zz_h) - a^E(\Pi_1^E \uu_h, \Pi_1^E \zz_h)
  - S^E((I-\Pi_1^E) \uu_h, (I-\Pi_1^E)\zz_h)
  \\
  & = a^E(\uu_h - \Pi_1^E \uu_h,\zz_h - \Pi_1^E \zz_h) 
  - S^E((I-\Pi_1^E) \uu_h, (I-\Pi_1^E)\zz_h)
\end{align*}
where we have used the $a-$orthogonality property of the projector operator $\Pi_1^E := \Pi_1^{\nabla^s,E}$ defined in \eqref{eq:Ps_k^E}.
In view of \eqref{eq:S^E}, we infer that
\begin{align*}
| II^E | & \lesssim \|\nabla^s(\uu_h - \Pi_1^E \uu_h)\|_{L^2(E)} \, \|\nabla^s(\zz_h - \Pi_1^E\zz_h)  \|_{L^2(E)}
\\
& \le \|\nabla^s(\uu_h - \Pi_1^E \uu)\|_{L^2(E)} \, \|\nabla^s(\zz_h - \Pi_1^E\zz)\|_{L^2(E)} ,
\end{align*}
because $\Pi_1^E \uu_h$ is a projection with respect to the operator $\nabla^s$.
Adding and subtracting $\uu$, $\zz$, and combining the Cauchy-Schwarz inequality with \eqref{eq:error-vem} yields
\[
| II | \le \sum_{E\in\PhG} | II^E | \lesssim
\big(\mathcal{E}(\uu) + \mathcal{F}(\ff) + \mathcal{G}(\boldsymbol{g}) +\mathcal{H}(\boldsymbol{h}) \big) \mathcal{E}(\zz).
\]

%
%

\noindent
3. {\it Estimate of $III$:} We split $III = III_1 + III_2$ and recall \eqref{eq:right} to find out
\begin{gather*}
| III_1 | \le \sum_{E\in\PhG} \int_E \big| (\ff - \Pi_1^{0,E} \ff) \cdot \zz_h \big|
\le \|\ff - \ff_h\|_{\VV^*} \|\zz_h\|_{\VV} \lesssim \mathcal{F}(\ff) \|\zz\|_{\VV},
\\
| III_2 | \le \|\boldsymbol{h} - \boldsymbol{h}_h\|_{[L^2(\partial\D_N)]^2} \|\zz_h\|_{\VV}
\lesssim \mathcal{H}(\boldsymbol{h}) \|\zz\|_{\VV},
\end{gather*}
because $\|\zz_h\|_{\VV} \lesssim \|\zz\|_{\VV}$. This concludes the proof.
\end{proof}

\begin{lemma}[interpolation]
\label{lem:interpolation}
Let the mesh $\PhG$ satisfy $\boldsymbol{(A1)}$ and $\boldsymbol{(A2)}$.
Let $\vv \in \VV$ be so that $\vv|_E \in \left[H^{s+1}(E)\right]^2$  for all $E\in\PhG$  with $0 < s \le 1$.  Then 
\begin{equation*}
\mathcal{E}(\vv) \lesssim  \, \sum_{E \in \PhG} h_E^s \, |\vv|_{H^{s+1}(E)}
\end{equation*}
\end{lemma}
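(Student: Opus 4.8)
The plan is to establish the interpolation estimate by combining a standard virtual element interpolation operator with a polynomial approximation estimate, both applied elementwise. First I would recall that for the virtual element space $\VV_h$ subordinate to $\PhG$, there exists a local interpolation operator $\mathcal{I}_h^E : \left[H^{s+1}(E)\right]^2 \to \VV_h^E$ such that, for $\vv|_E \in \left[H^{s+1}(E)\right]^2$ with $0 < s \le 1$, one has
\[
\|\vv - \mathcal{I}_h^E \vv\|_{\VV,E} \lesssim h_E^{s}\, |\vv|_{H^{s+1}(E)},
\]
with constant depending only on the shape-regularity constant $\rho$ in $\boldsymbol{(A1)}$–$\boldsymbol{(A2)}$. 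This is the VEM analogue of the classical finite element interpolation estimate; it is obtained by comparing $\vv$ with its interpolant through a Bramble--Hilbert / scaling argument on $E$, using that the DoFs $\boldsymbol{D_X1}$, $\boldsymbol{D_X2}$ are $H^1$-bounded on the boundary polynomial space $\B_1(\partial E)$ and that $[\Pk_1(E)]^2 \subseteq \VV_h^E$, so the interpolant reproduces linear polynomials. References \cite{Antonietti-BeiraodaVeiga-Mora-Verani:2014,Stokes:divfree,NavierStokes:divfree} already contain such estimates; I would cite them and adapt.

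Next I would assemble the global interpolant $\mathcal{I}_h \vv \in \VV_h$ by gluing the local pieces, which is consistent because the DoFs are shared across edges and hence $\mathcal{I}_h \vv \in [H^1(\Omega)]^2$. Squaring and summing the local estimates over $E \in \PhG$ gives
\[
\|\vv - \mathcal{I}_h \vv\|_{\VV}^2 = \sum_{E\in\PhG} \|\vv - \mathcal{I}_h^E\vv\|_{\VV,E}^2 \lesssim \sum_{E\in\PhG} h_E^{2s}\, |\vv|_{H^{s+1}(E)}^2,
\]
and therefore $\mathcal{I}(\vv) = \min_{\vv_h\in\VV_h}\|\vv-\vv_h\|_{\VV} \le \|\vv - \mathcal{I}_h\vv\|_{\VV} \lesssim \left(\sum_E h_E^{2s}|\vv|_{H^{s+1}(E)}^2\right)^{1/2} \le \sum_E h_E^s |\vv|_{H^{s+1}(E)}$, where the last step is just $\|\cdot\|_{\ell^2} \le \|\cdot\|_{\ell^1}$. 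For the polynomial-approximation part $\mathcal{P}(\vv)$, on each $E$ I would take $\vv_\pi$ to be the degree-one averaged Taylor polynomial and invoke the Bramble--Hilbert lemma on star-shaped domains (using $\boldsymbol{(A1)}$), obtaining $\|\vv - \vv_\pi\|_{\VV,E} \lesssim h_E^s |\vv|_{H^{s+1}(E)}$; summing as above controls $\mathcal{P}(\vv)$ by the same quantity. Since $\mathcal{E}(\vv) := \mathcal{I}(\vv) + \mathcal{P}(\vv)$, adding the two bounds yields the claim.

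The main obstacle — more a point requiring care than a genuine difficulty — is that the mesh $\PhG$ arises from cutting a background grid and may contain elements with edges much smaller than $h_E$ or with strong anisotropy, so one must be clear about which constants the hidden $\lesssim$ depends on. Under the stated assumptions $\boldsymbol{(A1)}$ and $\boldsymbol{(A2)}$ the constants depend only on $\rho$, so the cited interpolation and Bramble--Hilbert estimates apply verbatim; it is worth remarking that the relaxations of $\boldsymbol{(A2)}$ in \cite{BdV-Lovadina-Russo,Brenner-Sung} would still give the result, though possibly with a logarithmic or mildly degenerate dependence. A secondary technical point is that $\mathcal{I}_h^E \vv$ requires the DoFs of $\vv|_E$ to be well-defined, i.e.\ pointwise vertex values and edge-midpoint normal-component values, which is legitimate since $H^{s+1}(E) \hookrightarrow C^0(\overline E)$ in two dimensions for $s>0$; this embedding must be quantified with a constant depending only on $\rho$ via a scaled trace/Sobolev inequality, which is standard.
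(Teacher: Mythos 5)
Your proposal is correct and follows the same route as the paper: the paper's proof simply invokes the VEM interpolation estimate (Theorem 4.1 of the divergence-free Navier--Stokes VEM reference) for $\mathcal{I}(\vv)$ and standard polynomial approximation results for $\mathcal{P}(\vv)$, exactly the two ingredients you assemble. Your additional detail on gluing the local interpolant, the $\ell^2\le\ell^1$ step, and the well-posedness of the DoFs is a faithful expansion of what those cited results contain.
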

\begin{proof}
Use the interpolation estimate of Theorem 4.1 in Ref. {NavierStokes:divfree} for $\mathcal{I}(\vv)$, and standard polynomial approximation estimates \cite{brennerscott} for $\mathcal{P}(\vv)$.
\end{proof}

The previous outcomes  can be summarized in the following convergence result.
\begin{theorem}[torque error estimates] \label{thm:finale}
Let the mesh $\PhG$ satisfy $\boldsymbol{(A1)}$ and $\boldsymbol{(A2)}$ and let $h=\max_{E\in \PhG} h_E$. If $\ff\in [L^2(\Omega)]^2$, $\boldsymbol{g} \in [H^{3/2}(\partial\D_\text{in})]^2$, and $\boldsymbol{h}\in [H^1(\partial\D_N)]^2$, then the continuous and discrete torque functionals $\tau(\theta)$ and $\tau_h(\theta)$, given in \eqref{eq:weak-torque2} and \eqref{eq:jh}, satisfy the error estimate
\begin{equation}
\label{eq:estimate}
\big| \tau(\theta) - \tau_h(\theta) \big| \lesssim h |\log h|.
\end{equation}
\end{theorem}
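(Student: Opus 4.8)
The plan is to combine the abstract error representation of Proposition \ref{prp:intermedia} with the interpolation estimate of Lemma \ref{lem:interpolation} and sharp, yet $\theta$-uniform, regularity of the Stokes solution and its adjoint. Starting from
$$
\big|\tau(\theta)-\tau_h(\theta)\big| \lesssim \mathcal{E}(\uu)\,\mathcal{E}(\zz) + \big(\mathcal{F}(\ff)+\mathcal{G}(\boldsymbol{g})+\mathcal{H}(\boldsymbol{h})\big)\big(\mathcal{E}(\zz)+\|\zz\|_{\VV}\big),
$$
and recalling $\|\zz\|_{\VV}\lesssim 1$ from Corollary \ref{cor:energy-uz} (see \eqref{eq:energy-z}), it suffices to estimate the five approximation errors $\mathcal{E}(\uu)$, $\mathcal{E}(\zz)$, $\mathcal{F}(\ff)$, $\mathcal{G}(\boldsymbol{g})$, $\mathcal{H}(\boldsymbol{h})$, keeping every constant independent of $\theta\in I_{\epsilon_0}$.

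First I would dispatch the three data terms, each of which is $O(h)$. Since $\ff_h|_E=\Pi_1^{0,E}\ff$ and $\Pi_1^{0,E}$ is the $L^2(E)$-orthogonal projection onto $[\Pk_1(E)]^2$, one has $(\ff-\ff_h,\vv)_{0,\Omega}=\sum_{E}(\ff-\Pi_1^{0,E}\ff,\vv-\Pi_1^{0,E}\vv)_{0,E}$ for all $\vv\in\VV$, so that the bound $\|\vv-\Pi_1^{0,E}\vv\|_{L^2(E)}\lesssim h_E|\vv|_{H^1(E)}$ and Cauchy--Schwarz give $\mathcal{F}(\ff)=\|\ff-\ff_h\|_{\VV^*}\lesssim h\,\|\ff\|_{[L^2(\Omega)]^2}$. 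For $\mathcal{G}(\boldsymbol{g})$ one uses that $\boldsymbol{g}_h$ interpolates $\boldsymbol{g}\cdot\ttt$ by continuous piecewise linears and $\boldsymbol{g}\cdot\nn$ by continuous piecewise quadratics; one-dimensional interpolation error bounds in $L^2$ and $H^1$ together with space interpolation at exponent $1/2$ yield $\mathcal{G}(\boldsymbol{g})=\|\boldsymbol{g}-\boldsymbol{g}_h\|_{[H^{1/2}(\partial\D_D)]^2}\lesssim h\,\|\boldsymbol{g}\|_{[H^{3/2}(\partial\D_\text{in})]^2}$; likewise $\mathcal{H}(\boldsymbol{h})=\|\boldsymbol{h}-\boldsymbol{h}_h\|_{[L^2(\partial\D_N)]^2}\lesssim h\,\|\boldsymbol{h}\|_{[H^1(\partial\D_N)]^2}$. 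Hence the whole second summand in the bound above is $O(h)$.

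The heart of the matter is the estimate $\mathcal{E}(\uu),\mathcal{E}(\zz)\lesssim h^{1/2}\,|\log h|^{1/2}$. Since we are in the thin-leaflet case $\epsilon=0$, near the tip $T$ of $\Gamma$ the fluid domain $\Omega$ is slit (opening angle $2\pi$), so, as recorded after the proof of Proposition \ref{P:continuity}, both $\uu$ and $\zz-\Fi$ behave like $\rho^{1/2}\boldsymbol{U}(\phi)$ there, up to an $H^2$ remainder; away from $T$ the pairs $(\uu,p)$ and $(\zz-\zz_{\boldsymbol{\Phi}},q)$ are $[H^2]^2\times H^1$, since the rectangle corners and the hinge corners (whose openings are $\tfrac\pi2\pm\theta\le\pi-\epsilon_0$ for $\theta\in I_{\epsilon_0}$) are convex, and any Dirichlet--Neumann junction on $\partial\D$ produces at worst the same $\rho^{1/2}$ singularity — this is the $\epsilon=0$ analogue of the localization argument in the proof of Proposition \ref{prop:shapeder2}. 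Therefore $(\uu,p),(\zz,q)\in[H^{1+s}(\Omega)]^2\times H^s(\Omega)$ for every $s\in(0,\tfrac12)$, with norms bounded uniformly in $\theta$ but growing like $(\tfrac12-s)^{-1/2}$ as $s\uparrow\tfrac12$ (after absorbing the data via Corollary \ref{cor:energy-uz} and the hypotheses). Lemma \ref{lem:interpolation} then gives $\mathcal{E}(\uu)\lesssim h^s\,|\uu|_{H^{1+s}(\Omega)}\lesssim h^s(\tfrac12-s)^{-1/2}$, and similarly for $\mathcal{E}(\zz)$; optimizing by choosing $s=\tfrac12-1/|\log h|$ makes $h^s\simeq h^{1/2}$ and $(\tfrac12-s)^{-1/2}=|\log h|^{1/2}$, proving the claim. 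Multiplying, $\mathcal{E}(\uu)\,\mathcal{E}(\zz)\lesssim h\,|\log h|$, which absorbs the $O(h)$ data contributions and gives \eqref{eq:estimate}, uniformly in $\theta$ since every constant above is.

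I expect the main obstacle to be the sharp $\theta$-uniform regularity statement: one must work in the frame centred at $\ov$ with $L$ fixed (as in Proposition \ref{prop:shapeder2}), localize near the slit tip $T$ and near the Dirichlet--Neumann junctions on $\partial\D$, and carry out a Kondratiev/Grisvard-type corner analysis for the Stokes system to obtain the $H^{1+s}$ bound together with the precise $(\tfrac12-s)^{-1/2}$ blow-up of the constant, all uniformly in $\theta\in I_{\epsilon_0}$; the remainder is then bookkeeping. A minor technical point is to apply Lemma \ref{lem:interpolation} by controlling the broken $H^{1+s}$-seminorm by the global one, which is routine under $\boldsymbol{(A1)}$--$\boldsymbol{(A2)}$.
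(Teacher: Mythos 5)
Your proposal is correct and follows essentially the same route as the paper's proof: invoke Proposition \ref{prp:intermedia}, bound the data terms $\mathcal{F},\mathcal{G},\mathcal{H}$ by $O(h)$ under the stated hypotheses, use the $\rho^{1/2}$ tip singularity to get $|\uu|_{H^{3/2-\delta}},|\zz|_{H^{3/2-\delta}}\lesssim\delta^{-1/2}$ uniformly in $\theta$, and optimize $\delta=|\log h|^{-1}$ in Lemma \ref{lem:interpolation} so that $\mathcal{E}(\uu)\,\mathcal{E}(\zz)\lesssim h\,|\log h|$. The additional detail you give on the data approximation and on the $\theta$-uniform corner analysis is consistent with what the paper asserts more briefly.
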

\begin{proof}
The regularity of $\uu$ and $\zz-\Phi$ in $\Omega$ is dictated by the singularity at the tip of the leaflet. Such singularity, already used in Proposition \ref{P:continuity}, is of the form $r^{1/2}$ in polar coordinates centered at the tip provided $\ff\in[L^2(\Omega)]^2$; note that $\Phi=r \e_\omega^\perp$ is smooth. Therefore, a fractional derivative of order $1+s$ is square integrable
\[
\int_0^1 r^{2(\frac12-1-s)+1} dr = \int_0^1 r^{-2s} dr = \frac{1}{1-2s} < \infty
\]
provided $s<\frac12$. Take now $s=\frac12-\delta$, for $\delta>0$ sufficiently small, to obtain
\[
|\uu|_{H^{\frac32-\delta}(\Omega)} \lesssim \delta^{-\frac12},
\qquad
|\zz|_{H^{\frac32-\delta}(\Omega)} \lesssim \delta^{-\frac12}.
\]
Combining these estimates with Lemma \ref{lem:interpolation}, and choosing $\delta=|\log h|^{-1}$, yields
\[
\mathcal{E}(\uu), \, \mathcal{E}(\zz) \lesssim h^{\frac12} h^{-\delta}\delta^{-\frac12} \approx \big( h \, |\log h|\big)^{\frac12}.
\]
On the other hand, if $\ff\in [L^2(\Omega)]^2$, $\boldsymbol{g} \in [H^{3/2}(\partial\D_\text{in})]^2$, and $\boldsymbol{h}\in [H^1(\partial\D_N)]^2$, then
\[
\mathcal{F}(\ff)\lesssim h \|\ff\|_{[L^2(\Omega)]^2},
\quad
\mathcal{G}(\boldsymbol{g}) \lesssim h \|\boldsymbol{g}\|_{[H^{3/2}(\partial\D_\text{in})]^2},
\quad
\mathcal{H}(\boldsymbol{h}) \lesssim h \|\boldsymbol{h}\|_{[H^1(\partial\D_N)]^2} \, .
\]
The asserted estimate \eqref{eq:estimate} follows from \eqref{eq:stima} of Proposition \ref{prp:intermedia}.
\end{proof}

Whenever $\boldsymbol{h}_h$ is constructed so that its average on each edge  agrees with that of $\boldsymbol{h}$, the regularity requirement on $\boldsymbol{h}$ above can be easily relaxed to $\boldsymbol{h}\in [H^{1/2+s}(\partial\D_N)]^2$, $s>0$, without changing the convergence rate (where the positive $s$ is included only to guarantee the applicability of Gauss-like integration rules, which require pointwise evaluation).

\begin{remark}[optimality of \eqref{eq:estimate}]
We stress that the rate in \eqref{eq:estimate} is twice that associated with $\mathcal{E}(\uu)$. This is due to the use of the variational expressions \eqref{eq:weak-torque2} and \eqref{eq:jh}, which avoid evaluating the trace of $\T$ and $\boldsymbol{T}(\uu_h,p_h)$ on $\Gamma$ and allow for additional cancellation. The numerical experiments of Test 2 in Section \ref{sec:tests} confirm that the linear rate \eqref{eq:estimate} is optimal (up to the logarithm).
\end{remark}

\begin{remark}[discontinuous $\tau_h$]
  The discrete torque $\tau_h(\theta)$ might be discontinuous according to our experiments in Section \ref{sec:tests} for the {\tt dofi-dofi} stabilization. Since \eqref{eq:estimate} is uniform for $\theta\in I_{\epsilon_0}$ and $\tau(\theta)$ is uniformly continuous for $\theta \in I_{\epsilon_0}$, in light of Proposition \ref{P:continuity}, the triangle inequality implies that as $\widetilde\theta\to\theta$
$$  
\big|\tau_h(\widetilde\theta)-\tau_h(\theta)\big| \lesssim h\,|\log h|  + o(1) \qquad \forall \theta \in I_{\epsilon_0}\,. 
$$
We conclude that any possible jumps of $\tau_h$ must be of order $O(h|\log h|)$.
Consequently, we must accept that the discrete balance equation \eqref{eq:h-cond} be satisfied up to an $O(h|\log h|)$-error. This leads to the following solution algorithm.
\end{remark}

\subsection*{Bisection algorithm for the nonlinear system}
\label{sub:bisection}

Under the assumption of Proposition \ref{rm:limit}, the function $\psi(\theta):=\kappa( \theta) - \tau(\theta)$  satisfies
$$
\lim_{\theta \rightarrow \pm(\tfrac\pi2-\epsilon_0)} \psi(\theta) = \pm \infty \; .
$$
Since the error estimate \eqref{eq:estimate} is uniform in $\theta \in I_{\epsilon_0}$, we deduce that the function $\psi_h(\theta):=\kappa( \theta) - \tau_h(\theta)$ changes sign in $I_{\epsilon_0}$. We thus apply the bisection algorithm is a slightly smaller interval $I_{\delta_0}$, with $\delta_0>\epsilon_0$, and generate a sequence $\{\theta_h^n\}_{n \geq 0}$. The sequence converges to a limit value $\theta_h^\star$, which is either the exact solution of \eqref{eq:h-cond} (if $\tau_h$ is continuous in $\theta_h^\star$), or satisfies
$$
\big|\kappa(\theta_h^\star) - \tau_h(\theta_h^\star)\big| = O(h |\log h|)  \qquad \text{as } \ h \to 0\,.
$$
Combining the above bound with \eqref{eq:estimate}, the triangle inequality yields
$$
\big|\kappa(\theta_h^\star) - \tau(\theta_h^\star)\big| = O(h |\log h|)  \qquad \text{as } \ h \to 0 \, ,
$$
that represents the asymptotic satisfaction of the equilibrium condition \eqref{eq:leaflet_primale}
.
\begin{remark}[error estimate for $\theta$]\label{rem:eet}
If $\psi(\theta^\star)=\kappa(\theta^\star) - \tau(\theta^\star)=0$ dictates the exact equilibrium angle $\theta^\star$ and the {\it non-degeneracy} condition $\psi'(\theta) \ge \lambda>0$ is valid for all $\theta$ in the vicinity of $\theta^*$, then
$$
| \theta^\star - \theta_h^\star| \le \frac{h |\log h|}{\lambda}  \qquad \text{as } \ h \to 0 \, .
$$
\end{remark} 
}

\section{Numerical Tests}
\label{sec:tests}

\subsection{Stabilization}
We briefly sketch the construction of the two choices of stabilizing bilinear forms  $S^E(\cdot, \cdot)$  in \eqref{eq:S^E} used in the numerical tests. We recall that condition \eqref{eq:S^E}  essentially requires that the stabilizing term $\mathcal{S}^E(\boldsymbol{v}_h, \boldsymbol{v}_h)$ scales as $a^E(\boldsymbol{v}_h, \boldsymbol{v}_h)$.
The first option for the stabilization is the so-called \texttt{dofi-dofi}.
Let us denote with $\vec{\boldsymbol{u}}_h$, $\vec{\boldsymbol{v}}_h \in \R^{N_{\text{DoFs}, E}}$
the vectors containing the values of the $N_{\text{DoFs}, E}$  local degrees of freedom associated to $\uu_h, \vv_h \in  \VV_h^E$. Then, we set
\begin{equation}
\label{eq:dofidofi}
\mathcal{S}^E_{\texttt{dofi}} (\uu_h, \vv_h) =  \nu \: \vec{\uu}_h \cdot  \vec{\vv}_h \,.
\end{equation}
The second stabilization adopted in the numerical tests is the \texttt{trace} stabilization introduced in \cite{wriggers}
\begin{equation}
\label{eq:trace}
\mathcal{S}^E_{\texttt{trace}} (\uu_h, \vv_h) =  
\nu h_E \,  \int_{\partial E} \partial_s \uu_h \cdot \partial_s \vv_h \, {\rm d}s  \,.
\end{equation}
Using standard scaling arguments  we notice that the above stabilizations yield the correct scaling for $\mathcal{S}^E (\cdot, \cdot)$ in accordance with \eqref{eq:S^E}, at least for mesh satisfying assumptions $\boldsymbol{(A1)}$ and $\boldsymbol{(A2)}$. An analysis under more general mesh assumptions can be found in ~\cite{BdV-Lovadina-Russo,Brenner-Sung}. Finally, note that we multiply both forms by $\nu$, which is a standard choice for this simple material law, in order to have a correct scaling also with respect to the material parameters.

\subsection{Problem setting and adopted meshes.}
In the proposed tests we consider the fluid-structure interaction problem \eqref{eq:fsi_primale} posed on the square domain  $\D = [0, 1]^2$ with vanishing external load $\boldsymbol{f} = \boldsymbol{0}$ and fluid viscosity $\nu = 1$. We refer again to Fig. \ref{F:leaflet-model} for a depiction of the general problem geometry. We take the following boundary conditions: free boundary conditions $\boldsymbol{h}=\boldsymbol{0}$ at the right outflow boundary edge $ \partial\D_N :=\{1\} \times [0, 1]$,
 Dirichlet boundary conditions at the left inflow edge $\{0\} \times [0, 1]$, given by
\[
\uu(0, y) = (\phi(y),0) \qquad \text{with} \ \  \phi(y)= 0.1 \, y(1 - y) \,.
\]
At the top and bottom wall $[0, 1] \times \{0, 1\}$ of the domain, no-slip boundary conditions are applied (i.e. $\uu = \boldsymbol{0}$).
{The hinged point is in position $O = (0.5, 0)$ and the leaflet has a length of 0.5 with the spring relaxed position $\theta_0 = 0$ being set as the vertical direction (that is, when the tip is in position (0.5,0.5)).}
In all subsequent tests, we assume a linear response $\kappa(\theta)=\kappa_s \theta $ of the spring, where the constant elastic modulus $\kappa_s$ will be specified in each test.

For what concerns  mesh generation, in the numerical tests we use a sequence of underlying square meshes $\{\Ph\}_h$ (where $h$ is the length of the edges of the squares) and we cut them with $\Gamma$. In order to avoid machine precision issues we collapse two vertexes of the associated cut mesh $\PhG$ if the distance between them is less then $\texttt{1e-14}$ with respect to the mesh size.
We investigate the results obtained with the VEM schemes of order one and two (denoted with $k=1$ and $k=2$, respectively); see  Remark \ref{rm:k2}.

{Notice that in the setting under investigation two possible situations can occur: if $1/h$ is even the hinged point $O$ is a vertex of the underlying mesh, while if $1/h$ is odd the point $O$ corresponds to the midpoint of an edge. Since this two situations may yield different mesh configurations, we analyse both cases in the following numerical tests.}
As a consequence of the mesh cutting procedure, anisotropic elements can be generated when the absolute angle value $|\theta|$ is very small or near $\pi/2$ (this latter configuration being outside our scope since it would require to include a contact condition among the vessel walls and the leaflet).
Furthermore, note that the mesh cutting procedure can generate very small elements and edges, that is elements with a diameter that is much smaller than $h$ and elements of diameter comparable to $h$ having edges that are much smaller. 

{
\subsection{Numerical experiments}
We now conduct three comprehensive numerical tests with the proposed VEM.\looseness=-1
}
\paragraph{Test 1: Study of the $\tau_h$ functional.}
\label{test1}
In the present test we assess the robustness of the VEM technology for fluid-structure interaction problems, in particular we evaluate the qualitative behaviour of the discrete torque functional $\tau_h$ in terms of continuity and monotonicity with respect to $\theta$, the condition number of the resulting linear system and the discrete inf-sup constant $\beta_h$ 
$$
\beta_h := \infsup{q_h \in Q_h / \{ \boldsymbol{0} \}  } {\boldsymbol{v}_h \in \VV_h/ \{ \boldsymbol{0} \} }
  \frac{b(\boldsymbol{v}_h, q_h)}{ \|\boldsymbol{v}_h\|_{\VV} \|q_h\|_Q }  .
$$
For the sake of simplicity, in the present test we consider the Stokes version of \eqref{eq:fsi_primale} (that is, without the convective term) with spring modulus $\kappa_s = \texttt{1}$ and the data described above.

In Figs \ref{fg:gtheta15-31} and \ref{fg:gtheta16-32} we plot the function $\tau_h$ obtained in the  ``odd case'' and in the ``even case'' for two levels of refinements ($1/h = 15$, $31$ and $1/h = 16$, $32$, respectively) for $k=1$, $2$ and the aforementioned choices of the stabilization forms. 
To validate the performances of the proposed numerical scheme we compute a ``reference torque'' $\tau$ manufactured in the following way: for any angle $\theta$  we build an ad-hoc (shape regular) very fine triangular mesh in the domain $\Omega=\D\setminus\Gamma(\theta)$. 
The output torque functional $\tau(\theta)$ is thus computed employing the well known {Crouzeix-Raviart Stokes finite element} $([\Pk_2 \oplus \mathcal{B}]^2, \text{ disc. }\Pk_1)$ on such a  mesh (with diameter $h=\texttt{0.02}$). 
Obviously this is a very expensive procedure in practice, but is acceptable to generate a reference solution.

The plots show the qualitative behaviour for the discrete functions $\tau_h$, in particular we can observe the following facts:
\begin{enumerate}[$\bullet$]
\item the graphs of the functions $\tau_h$ approach that of the reference function $\tau$ when $h$ decreases. As expected, the case $k=2$ yields better result than the case $k=1$.
Furthermore, we notice that, at least for the present data, the \texttt{dofi-dofi} stabilization produces discrete functions $\tau_h$ closer to the reference function $\tau$;
\item the graph of the function $\tau_h$ exhibits small jumps (or bumps) of amplitude decreasing with $h$, that appear when the leaflet tip (or its prolongation) crosses a vertex of the background mesh. This phenomenon is more evident for the \texttt{dofi-dofi} stabilization, whereas the \texttt{trace} stabilization  has better performances in terms of continuity of the associated discrete function $\tau_h$. An investigation of this aspect is presented below;
\item the function $\tau_h$ has, roughly speaking, a decreasing monotone trend with respect to $\theta$, that is only perturbed by the aforementioned small jumps or bumps.  This is in agreement with the physical intuition. Also note that the approximation gets better as $\theta$ increases.
\end{enumerate}

\begin{figure}[!t]
\center{
\includegraphics[scale=0.2]{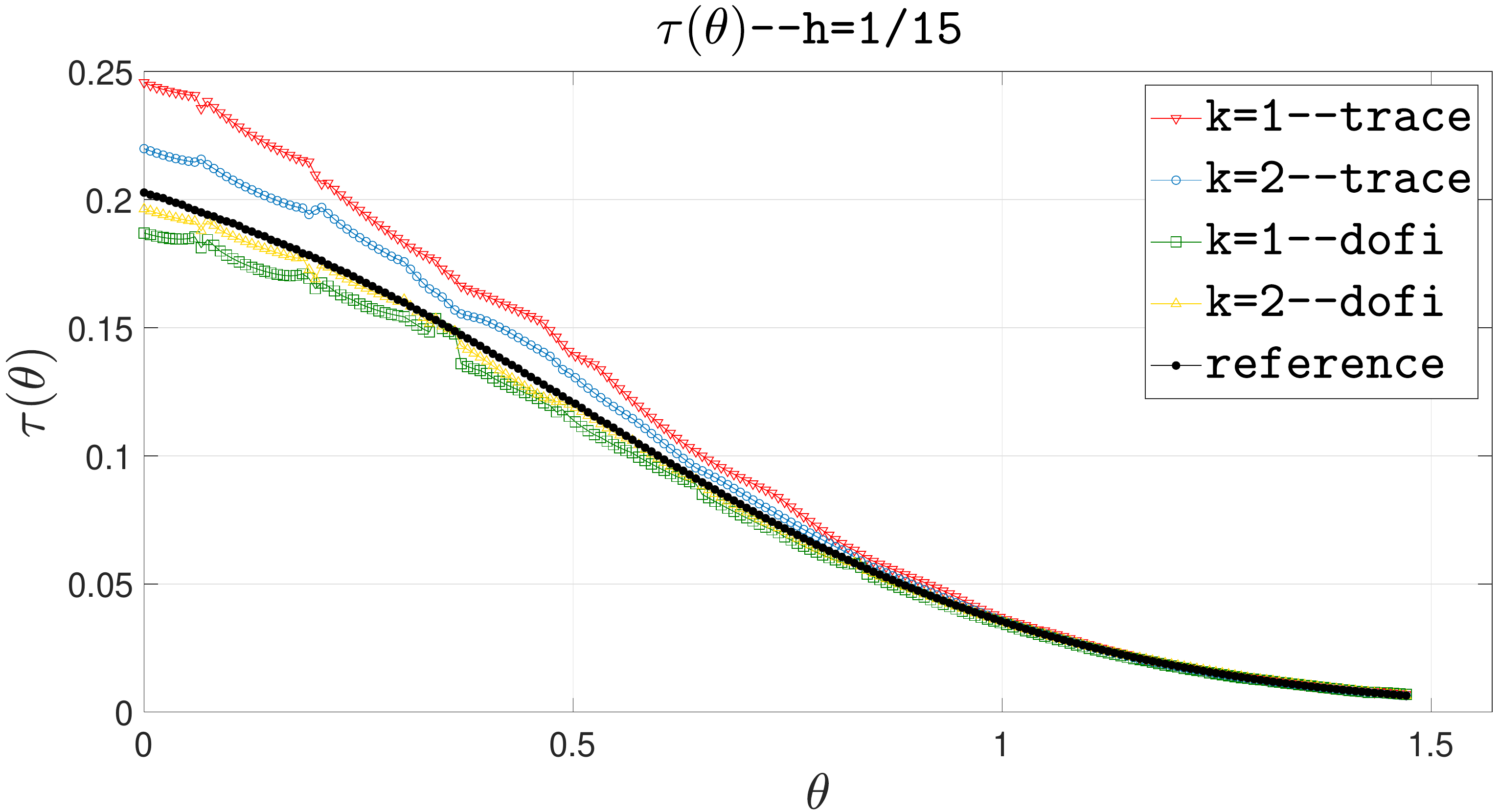}
\vspace{2ex}
\includegraphics[scale=0.2]{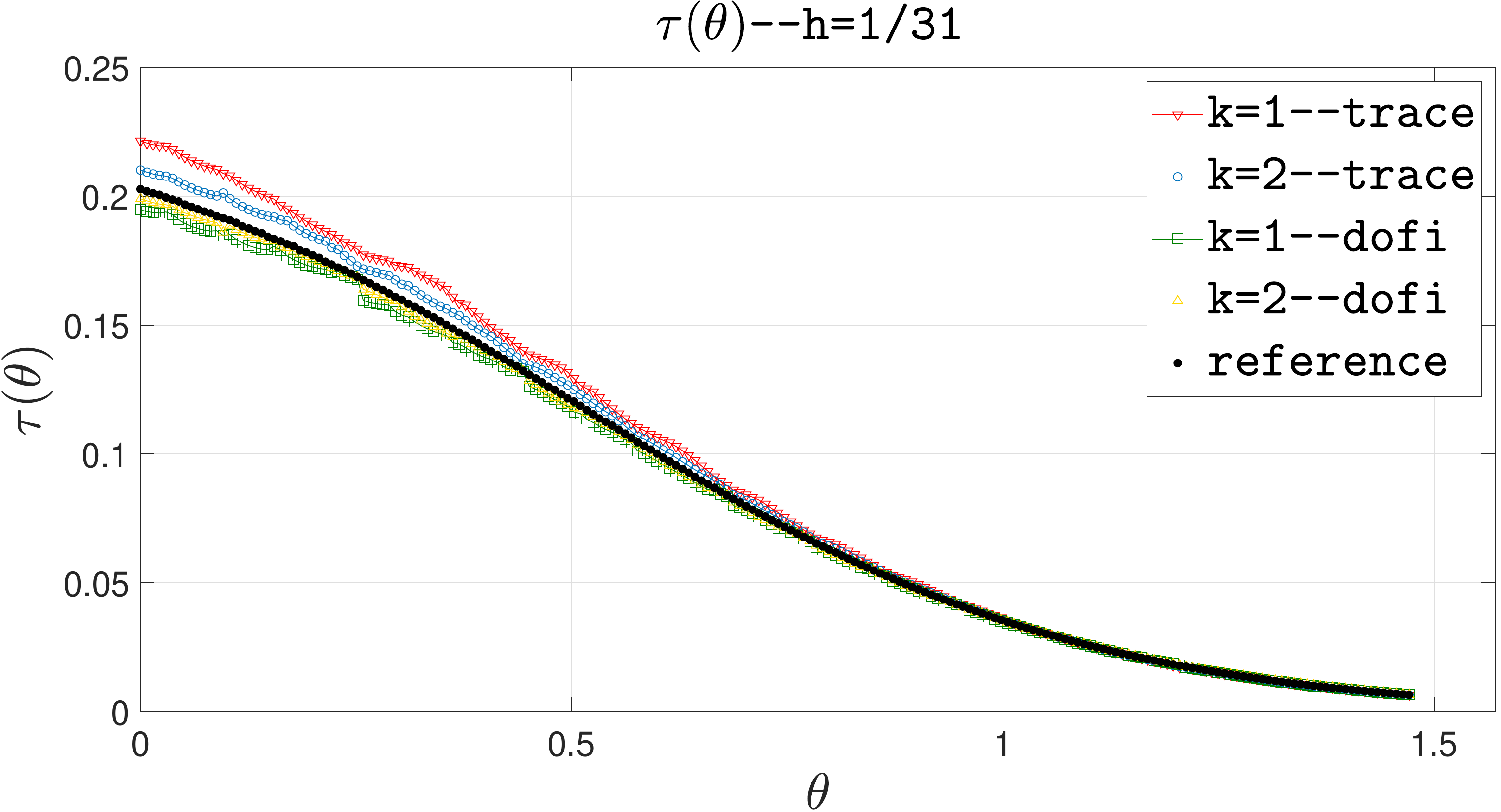}
}
\caption{{Test 1. $\tau_h$ obtained with $1/h= 15$ (left) and  $1/h= 31$ (right) for $k=1$, $2$, \texttt{trace} and \texttt{dofi-dofi} stabilizations.}}
\label{fg:gtheta15-31}
\end{figure}
\begin{figure}[!t]
\center{
\includegraphics[scale=0.2]{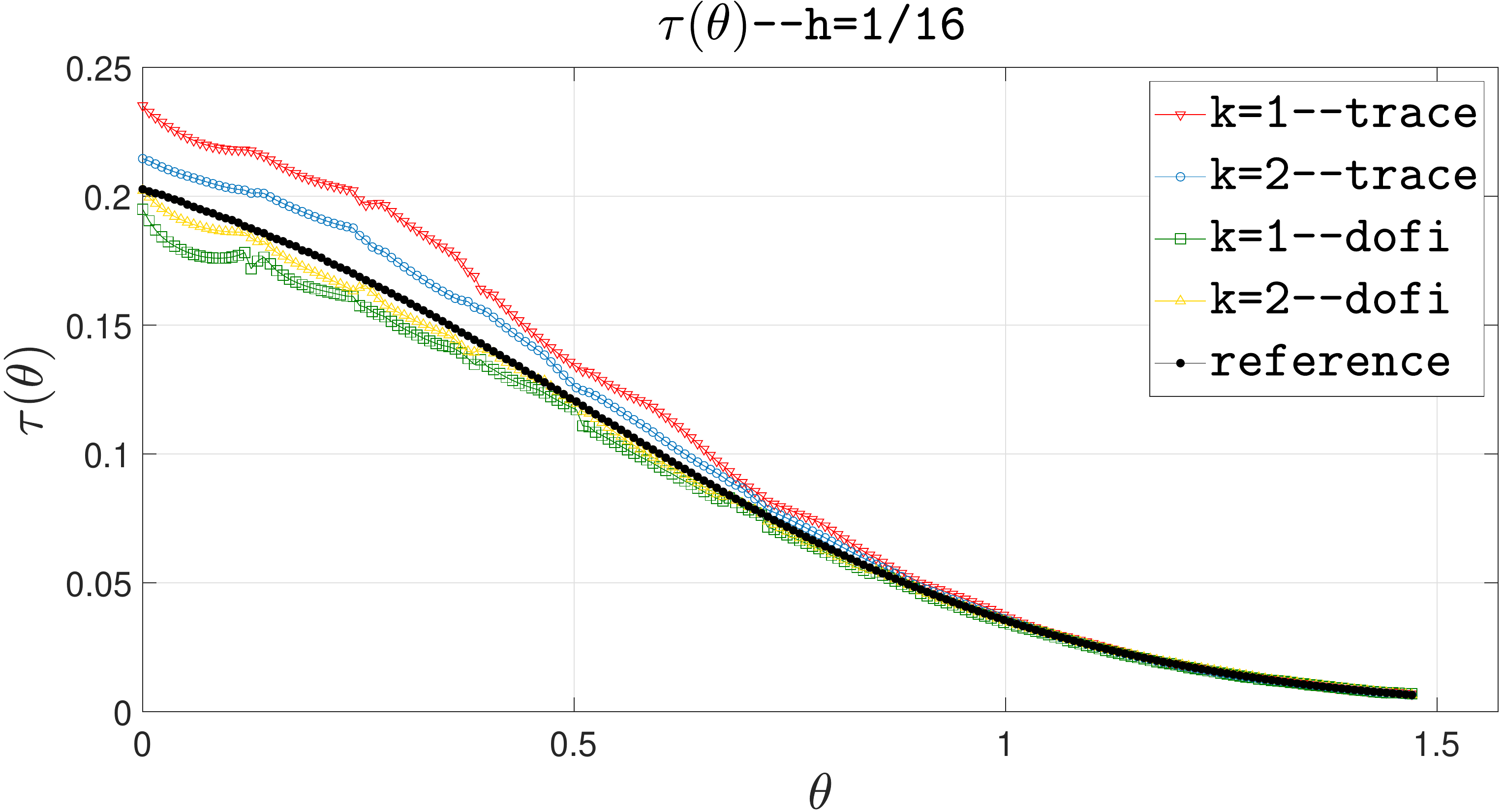}
\vspace{2ex}
\includegraphics[scale=0.2]{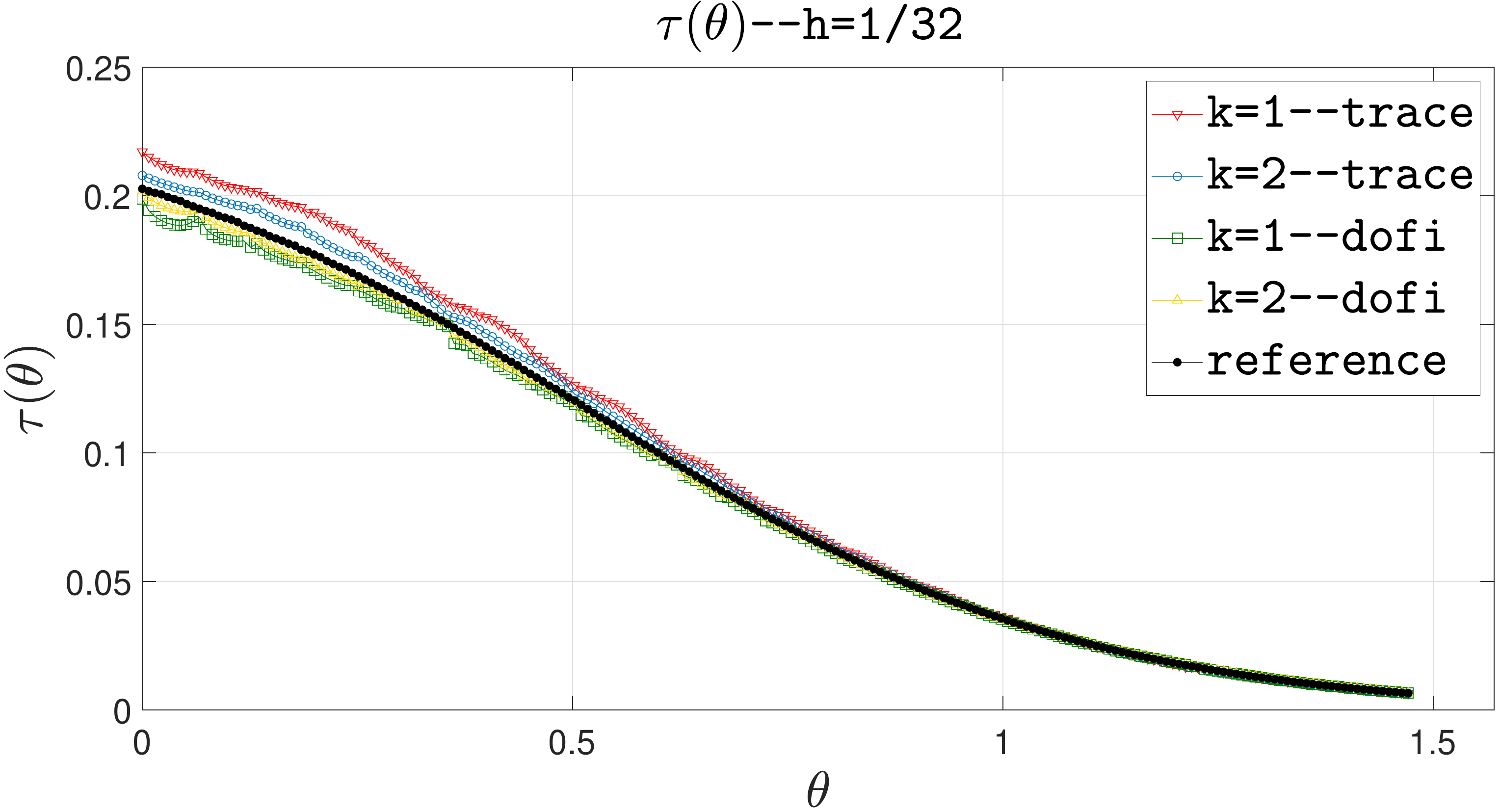}
}
\caption{{Test 1.  $\tau_h$ obtained with $1/h= 16$ (left) and  $1/h= 32$ (right) for $k=1$, $2$, \texttt{trace} and \texttt{dofi-dofi} stabilizations.}}
\label{fg:gtheta16-32}
\end{figure}

{In order to investigate the small jump/bump phenomenon detected above, in Fig. \ref{fg:tau15_meshes} and
Fig. \ref{fg:tau16_meshes}  we depict a zoom of the $\tau_h$ graph for some critical ranges of the angle $\theta$. 
We notice, as expected, that the jumps and bumps are related to a change in the topology of the mesh. In order to better appreciate this,  in both graphs we plot dashed vertical lines that mark the angle values associated to the mesh configurations shown in the lower part of the figure. 
For example, in Fig. \ref{fg:tau15_meshes} the jumps/bumps of cases \texttt{A} and \texttt{C} are generated by the leaflet (or its prolongation) crossing a mesh vertex; case \texttt{B} is instead generated by the leaflet tip crossing a mesh edge (which creates a big change in the local mesh configuration due to the leaflet prolongation procedure). Analogous observations can be made for Fig. \ref{fg:tau16_meshes}, cases \texttt{A}, \texttt{B}, \texttt{C}, \texttt{D}.
The difference between the two figures is that in Fig. \ref{fg:tau15_meshes}  the considered angles $\theta$ are very small (thus yielding anisotropic elements in addition to small edges/elements) while in Fig. \ref{fg:tau16_meshes} the considered angles are large (thus anisotropic elements are ruled out but small edges/elements can still be present). Some observations are in order.
\begin{enumerate}[$\bullet$]
\item By comparison of the Figures \ref{fg:tau15_meshes} and \ref{fg:tau16_meshes} one can immediately appreciate that the absence of anisotropy yields a much milder jump/bump phenomenon. 
\item At the critical angles, the \texttt{dofi-dofi} stabilization may generate jumps in the functional, with decreasing amplitude as $h$ tends to zero. The presence of such jumps appears to be related to the particular form of this  stabilization. Indeed, for the \texttt{dofi-dofi} stabilization a change in topology may modify the number of edges (and thus nodes) in an element thus leading to a smaller or larger sum in \eqref{eq:dofidofi}, which can justify the jumps in the $\tau_h$ graph. For instance, the small anisotropic triangle appearing in subfigure \texttt{B} of Fig. \ref{fg:tau15_meshes} has four edges before the leaflet tip touches the vertical line (due to the leaflet prolongation procedure), that become 3 edges after the tip has crossed such line.
\item Contrary to the \texttt{dofi-dofi} case, the \texttt{trace} stabilization generates bumps instead of jumps at the critical angles. Therefore only the function monotonicity, but not its continuity, is broken. This preferable behaviour of the \texttt{trace} stabilization may be partially associated to its known robustness in the presence of small edges ~\cite{wriggers,BdV-Lovadina-Russo,Brenner-Sung}. 
\item The changes in the mesh topology that happen at the critical angles yield abrupt modifications also for the corresponding pressure space. In order to check the influence of this pressure changes on the jumps/bumps previously mentioned, we ran  an analogous problem with a (vector) Laplace model problem (that is, without the divergence-free constraint and the corresponding pressure space).
Since the same jumps/bumps were found also in the new test problem, although with a smaller amplitude, we deduce that the incompressibility constraint  is not alone the cause of such phenomena. 
\end{enumerate}

}

\begin{figure}[!h]
\center{
\begin{overpic}[scale=0.25]{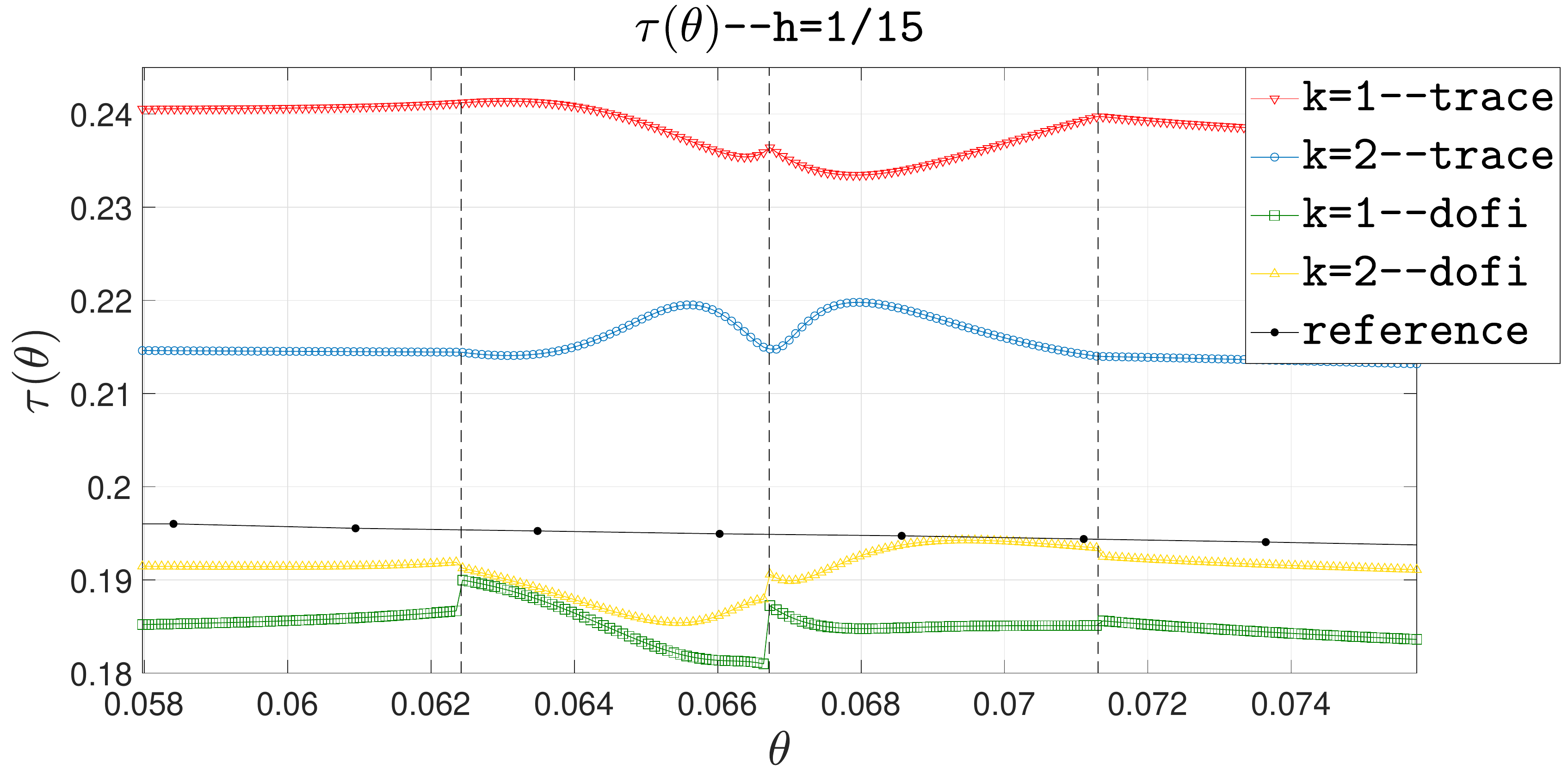} 
\put (25,20) {\texttt{case A}}
\put (45,20) {\texttt{case B}}
\put (66,20) {\texttt{case C}}
\end{overpic}
\\
\vspace{0.5cm}
\begin{overpic}[scale=0.12]{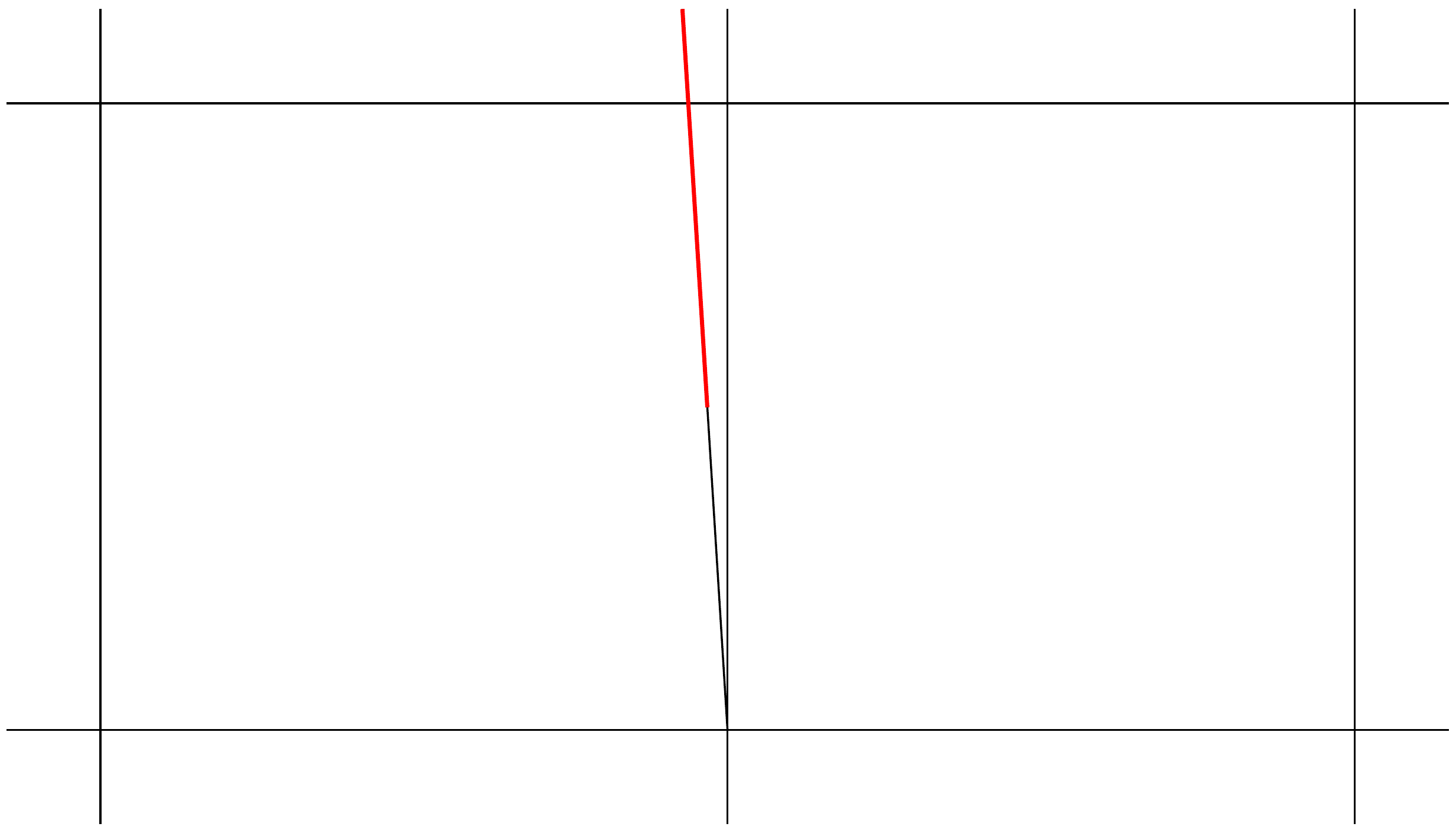}
\put (35,60) {\texttt{case A}}
\end{overpic}
\quad
\begin{overpic}[scale=0.12]{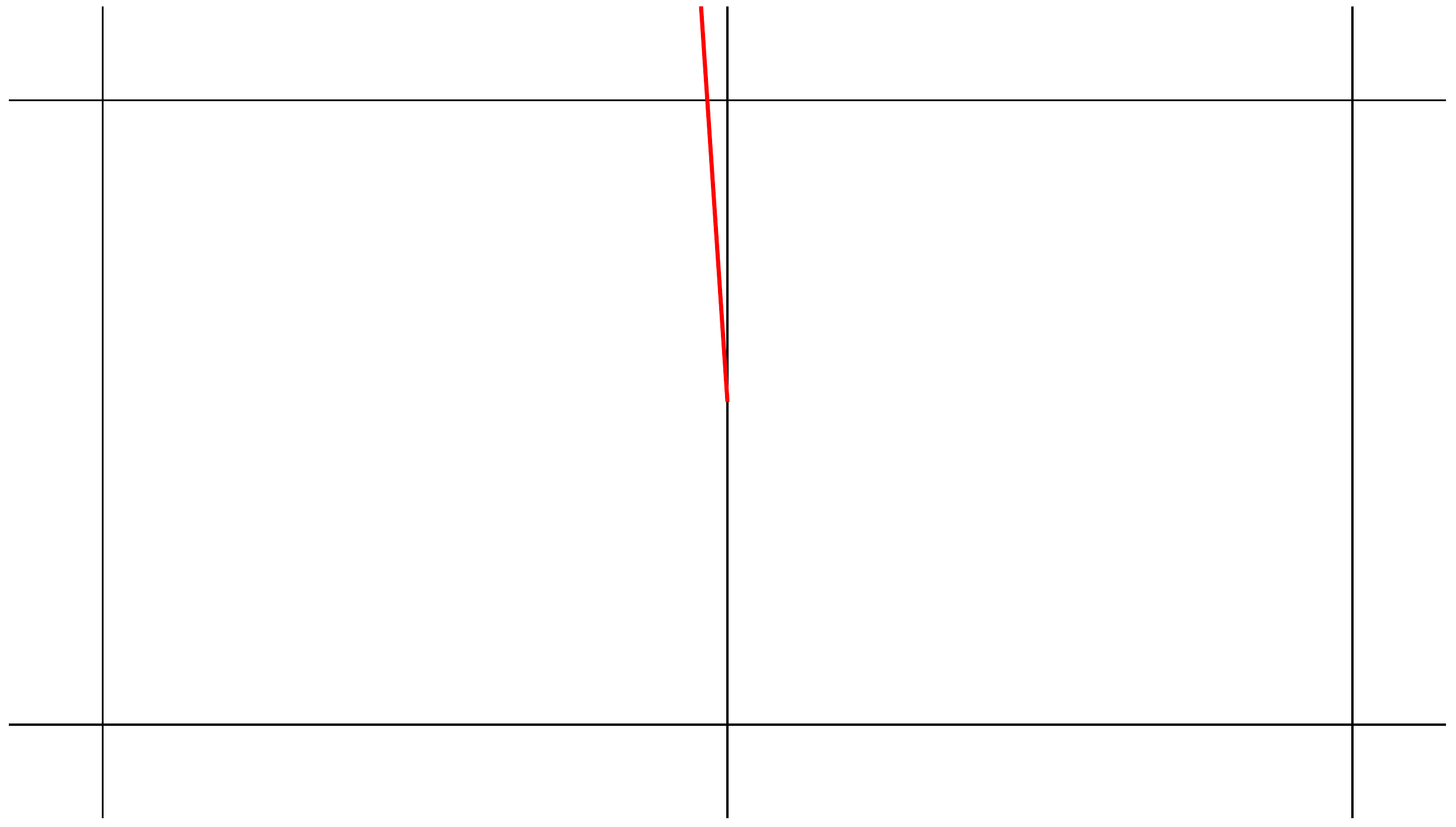}
\put (35,60) {\texttt{case B}}
\end{overpic}
\quad
\begin{overpic}[scale=0.12]{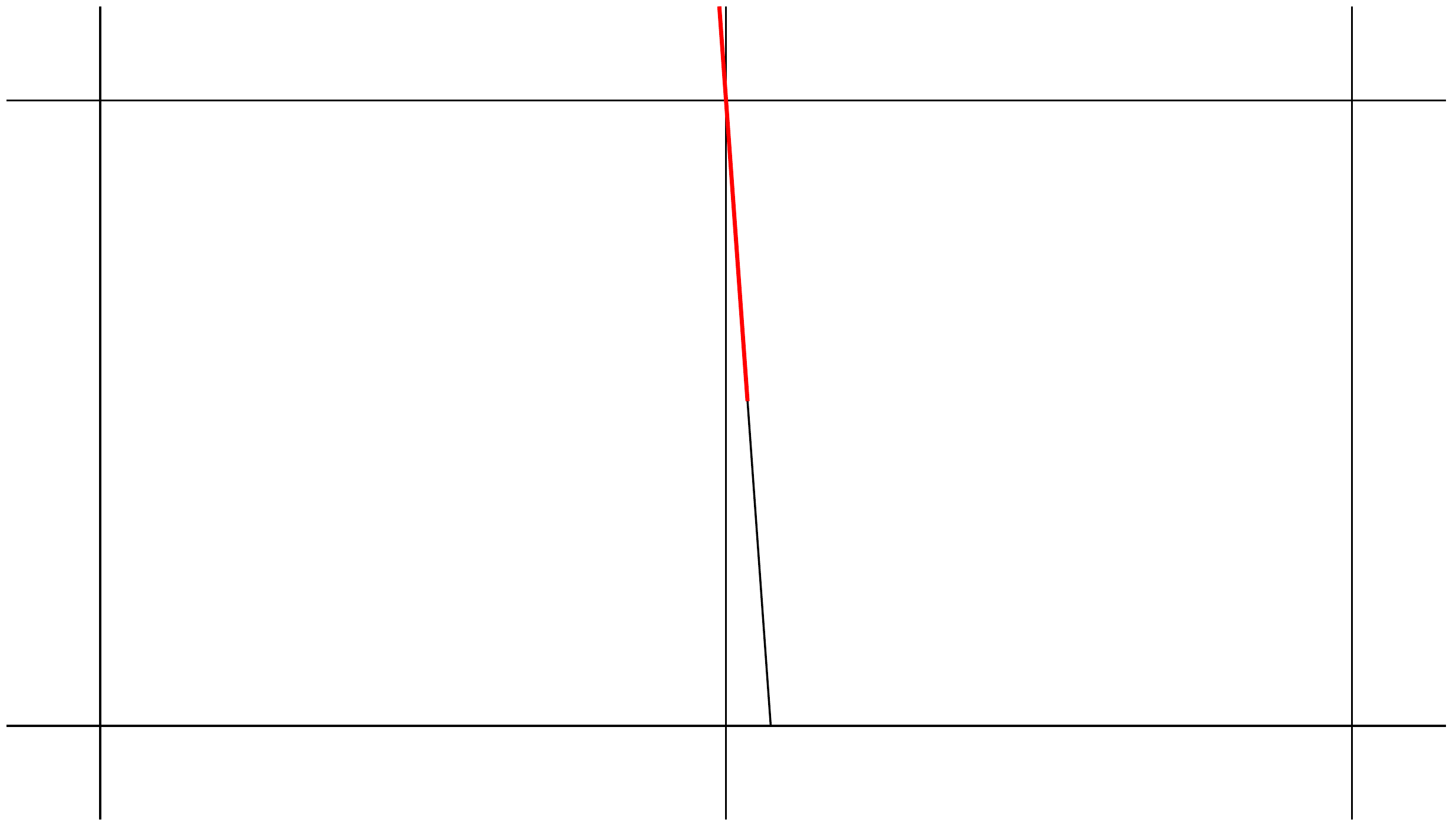}
\put (35,60) {\texttt{case C}}
\end{overpic}
}
\caption{{Test 1.  $\tau_h$ obtained with $1/h= 15$ for $k=1$, $2$, \texttt{trace} and \texttt{dofi-dofi} stabilizations near $\theta = 0$ (upper). Mesh configurations at the critical angles (lower). \texttt{case A}: leaflet prolongation crossing a mesh vertex;   \texttt{case B}: leaflet tip crossing a mesh edge; \texttt{case C}: leaflet  crossing a mesh vertex.
\texttt{case B} generates the more evident bump/jump of the function $\tau_h$.}}
\label{fg:tau15_meshes}
\end{figure}

\begin{figure}[!h]
\center{
\begin{overpic}[scale=0.25]{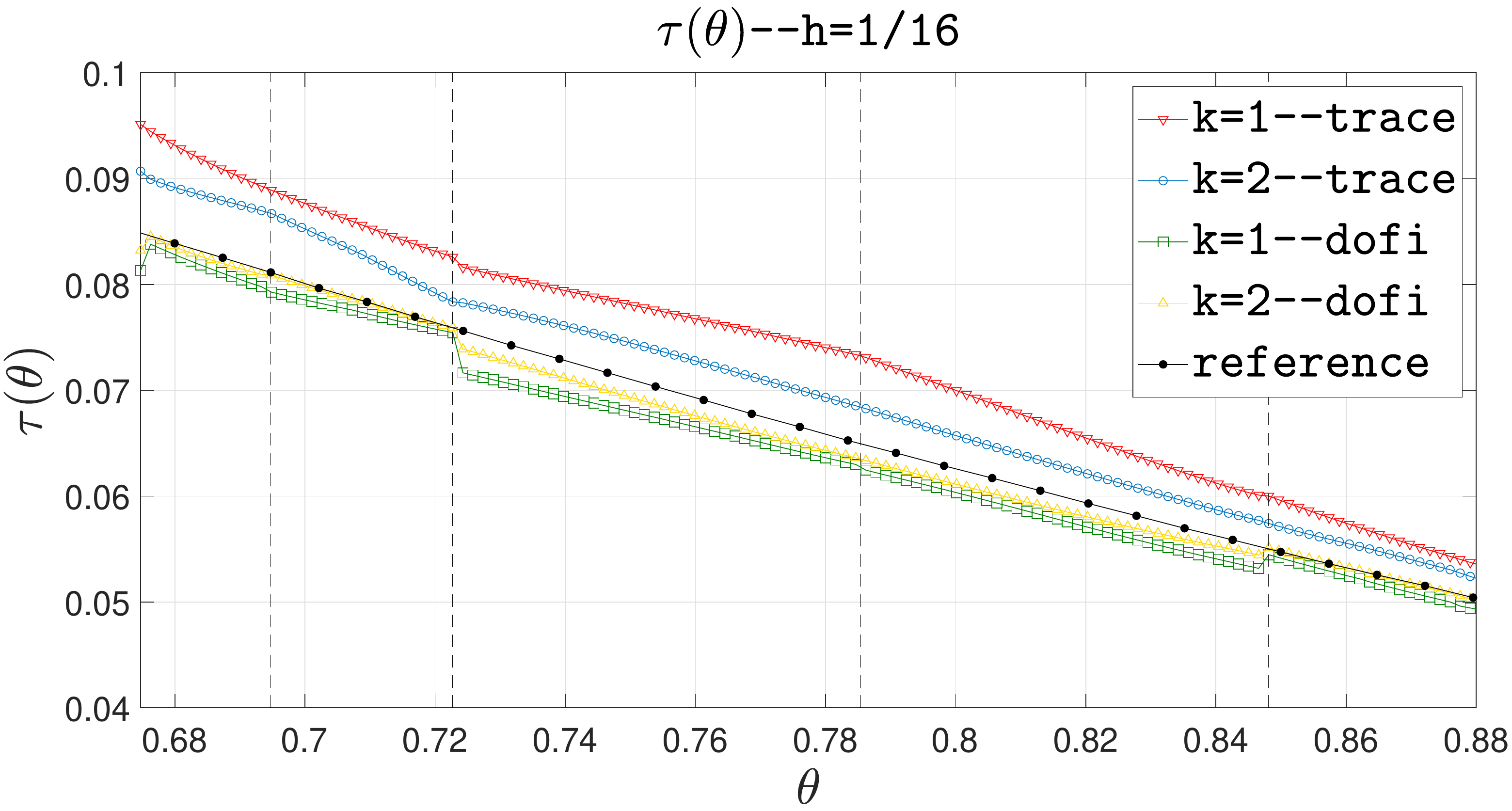} 
\put (13,12) {\texttt{case A}}
\put (25,12) {\texttt{case B}}
\put (52,12) {\texttt{case C}}
\put (79,12) {\texttt{case D}}
\end{overpic}
\\
\vspace{0.5cm}
\begin{overpic}[scale=0.15]{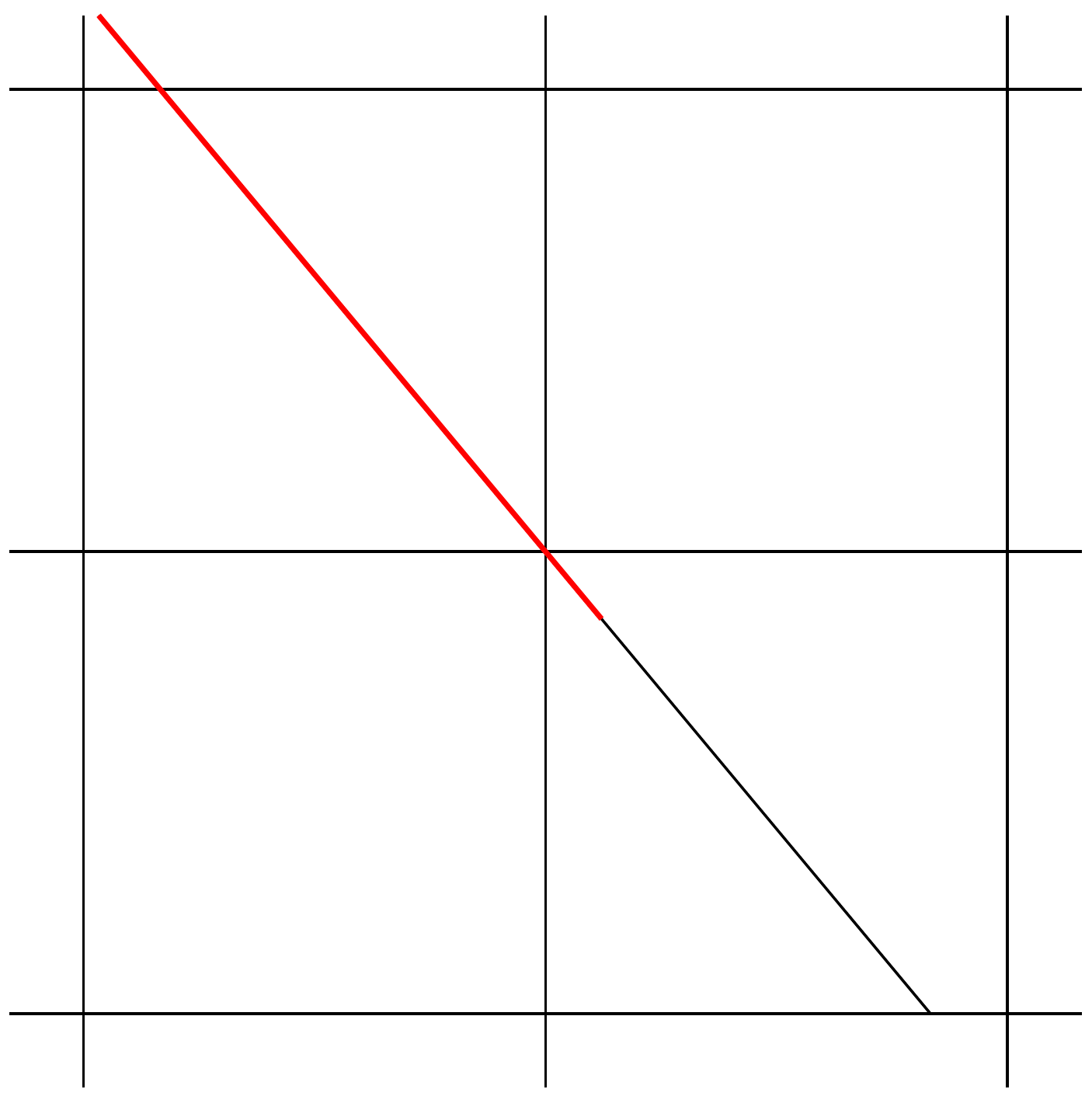}
\put (30,105) {\texttt{case A}}
\end{overpic}
\quad
\begin{overpic}[scale=0.15]{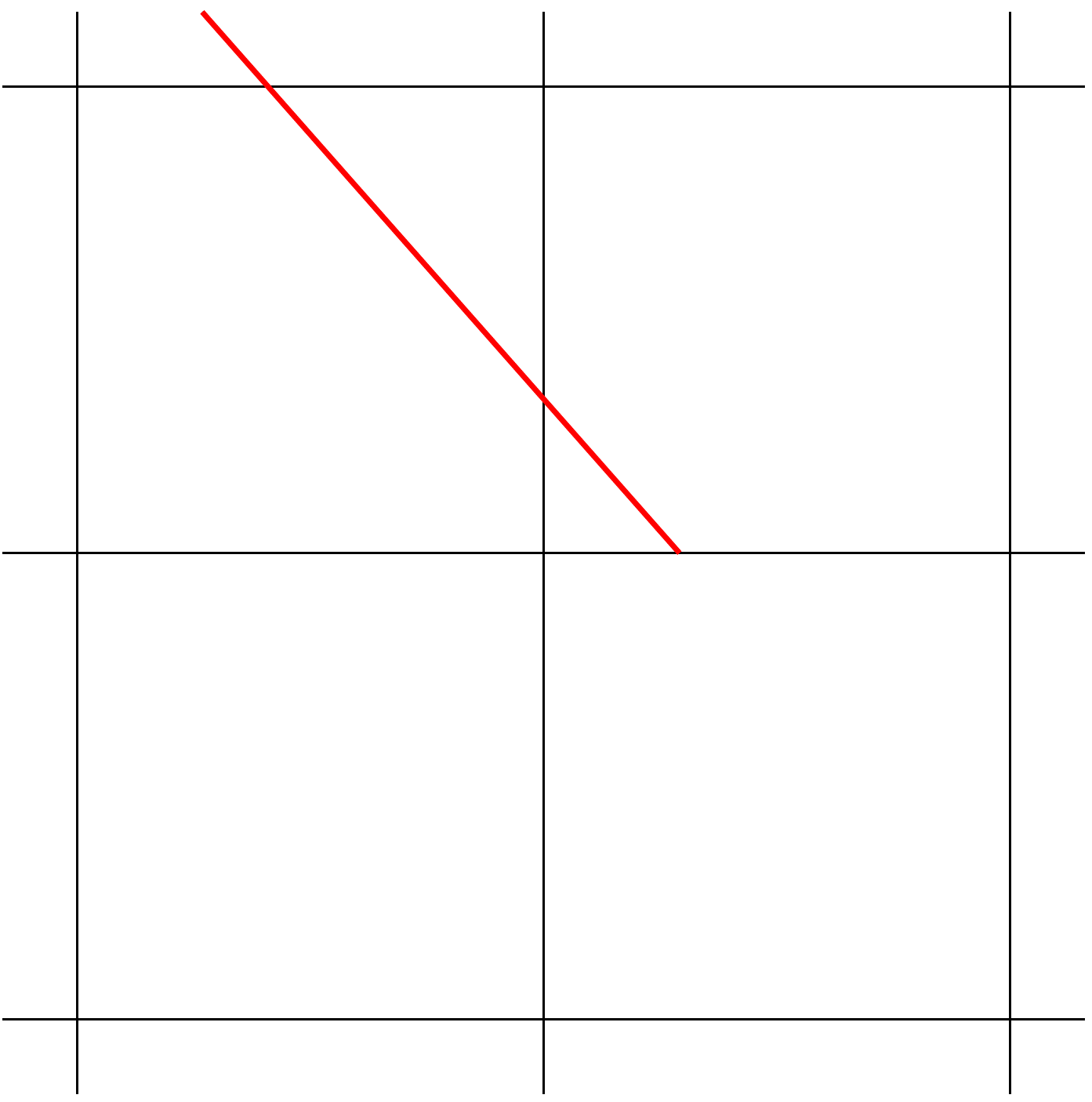}
\put (30,105) {\texttt{case B}}
\end{overpic}
\quad
\begin{overpic}[scale=0.15]{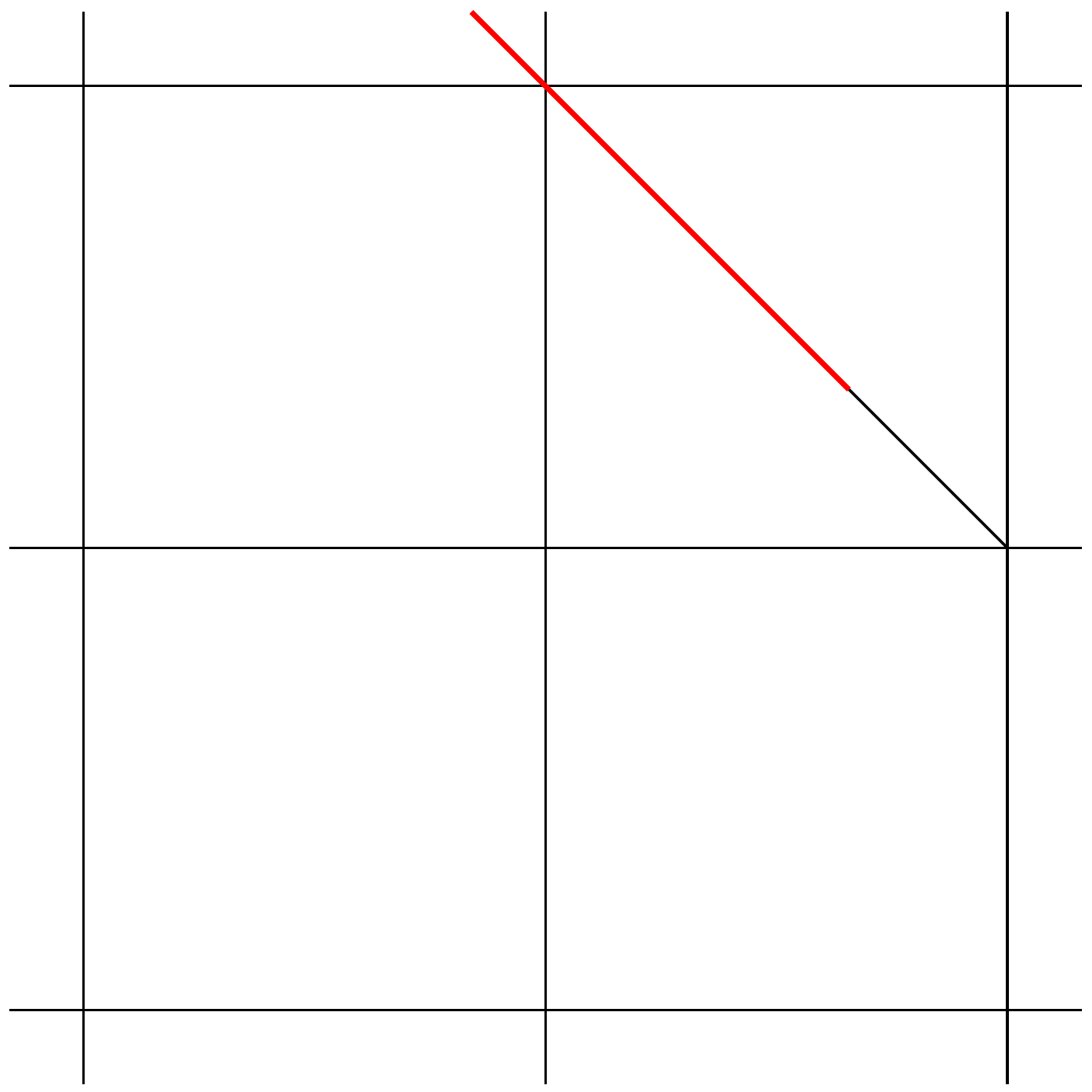}
\put (30,105) {\texttt{case C}}
\end{overpic}
\quad
\begin{overpic}[scale=0.15]{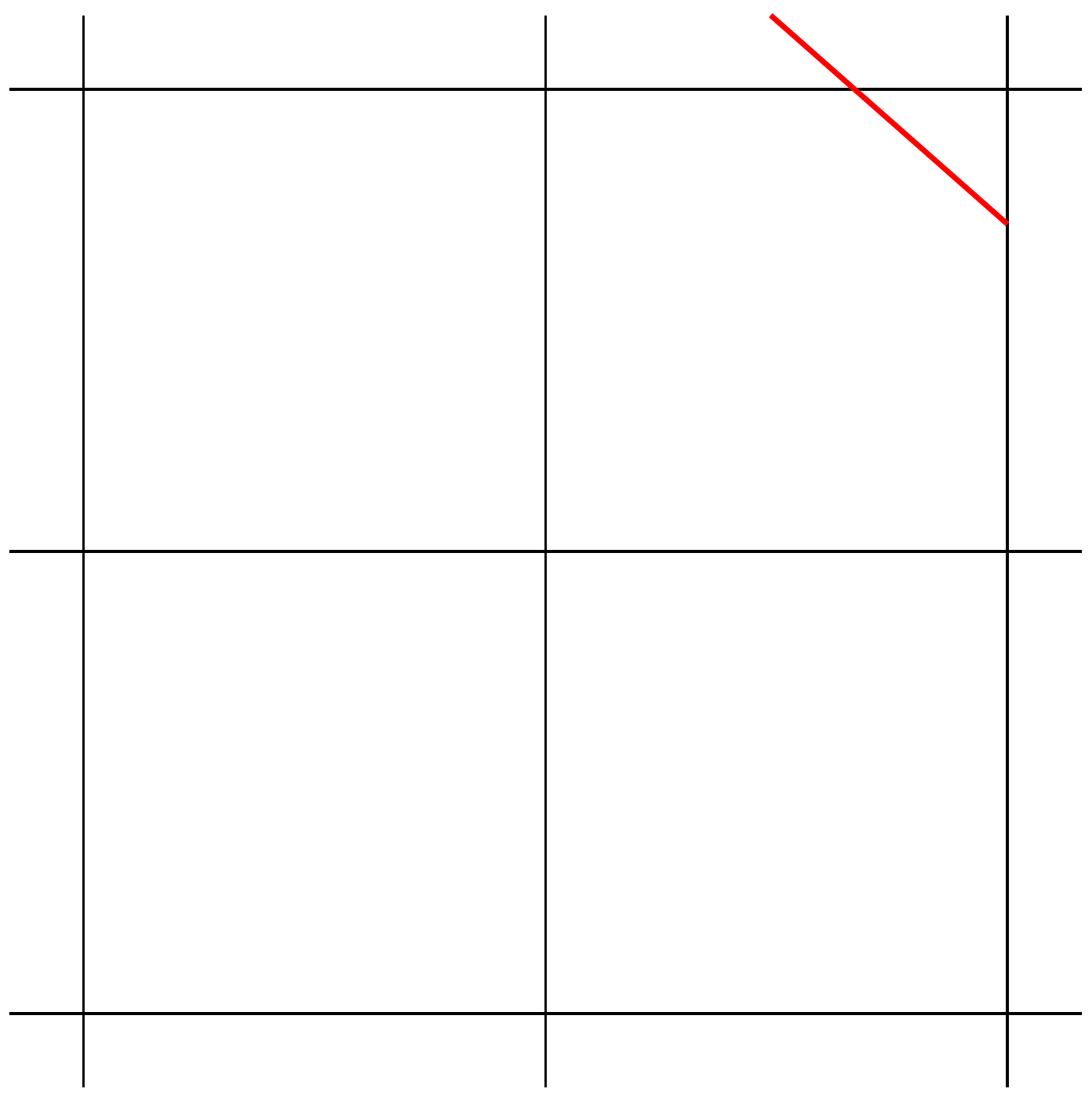}
\put (30,105) {\texttt{case D}}
\end{overpic}
}
\caption{{Test 1.  $\tau_h$ obtained with $1/h= 16$ for $k=1$, $2$, \texttt{trace} and \texttt{dofi-dofi} stabilizations near $\theta = \frac{\pi}{4}$ (upper). Mesh configurations at the critical angles (lower).
\texttt{case A}: leaflet crossing a mesh vertex;   \texttt{case B}: leaflet tip crossing a mesh edge; \texttt{case C}: leaflet and leaflet prolongation crossing a mesh vertex;
\texttt{case D}: leaflet tip crossing a mesh edge.
\texttt{case B} and \texttt{case D} generate the more evident bumps/jumps of the function $\tau_h$.}}
\label{fg:tau16_meshes}
\end{figure}

As a final remark, we must underline that all the above numerical perturbations of the $\tau_h$ functional get smaller as $h \rightarrow 0$ and, if one considers the strong local mesh topological changes in action, the scheme is still surprisingly robust. In a practical situation, one would not adopt coarse meshes such as those previously presented: in Fig. \ref{fg:gtheta-local} we plot the graph of $\tau_h$ for a fine mesh with $h = 1/128$ in the angle range $[0, 0.5]$. The smoothness can be clearly appreciated compared with the same angle range in Figs \ref{fg:gtheta15-31} and \ref{fg:gtheta16-32}.

\begin{figure}[!h]
\center{
\includegraphics[scale=0.225]{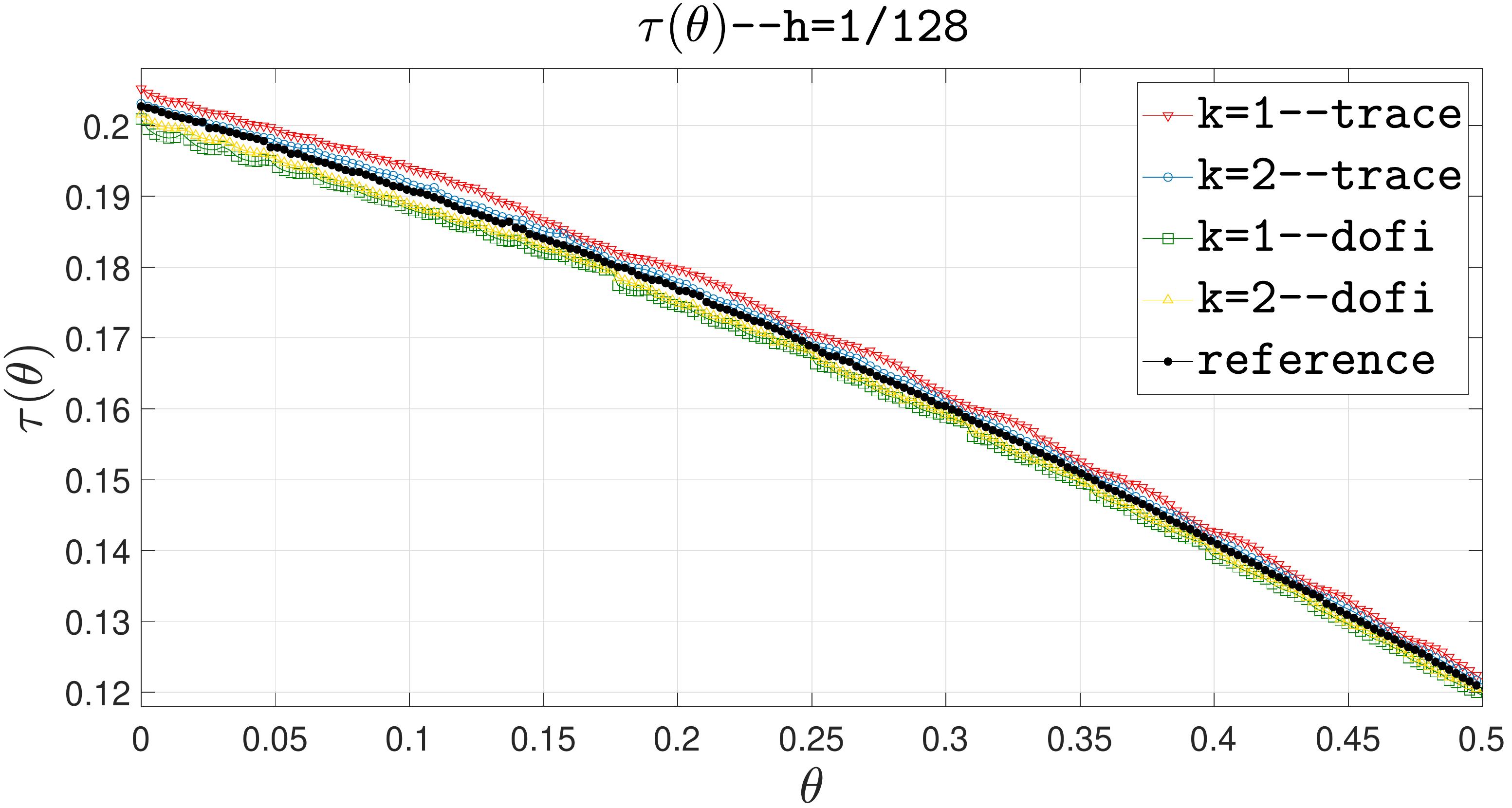}
}
\caption{{Test 1.  $\tau_h$ obtained with $1/h= 128$ for $k=1$, $2$, \texttt{trace} and \texttt{dofi-dofi} stabilizations. Focus on the interval $[0, 0.5]$.}}
\label{fg:gtheta-local}
\end{figure}

Fig. \ref{fig:test_cond} exhibits the condition number of the resulting linear system of the discrete scheme \eqref{eq:vem-stokes-fsi} as a function of the angular coordinate $\theta$.
We notice that the condition number suffers from the anisotropy of the elements (small angles $\theta$ and $1/h = 32$). Nevertheless as observed above (compare Figs \ref{fg:gtheta15-31} and \ref{fg:gtheta16-32}) the ill-conditioning of the problem seems not to affect the computation of $\tau_h$, at least for the direct solver adopted here.
Moreover we observe that for the \texttt{dofi-dofi} stabilization the condition number is more stable, in comparison with the \texttt{trace} stabilization, with respect to the presence of elements or edges with diameter/length that is orders of magnitude smaller than $h$.
{This can be roughly justified by considering a generic element of size $h_E$ with a ``small'' edge of length $h_e$ and vertexes $\nu,\nu'$. It is easy to check that the dual basis function $\phi$ associated to the vertex $\nu$ (or $\nu'$) satisfies $\mathcal{S}_{\texttt{trace}}^E(\cdot,\cdot) \sim h_E/h_e$. Therefore in the presence of large ratios $h_E/h_e >\!\!> 1$ this has a clear detrimental effect on the condition number of the ensuing stiffness matrix.}

\begin{figure}[!b]
\center{
\includegraphics[scale=0.19]{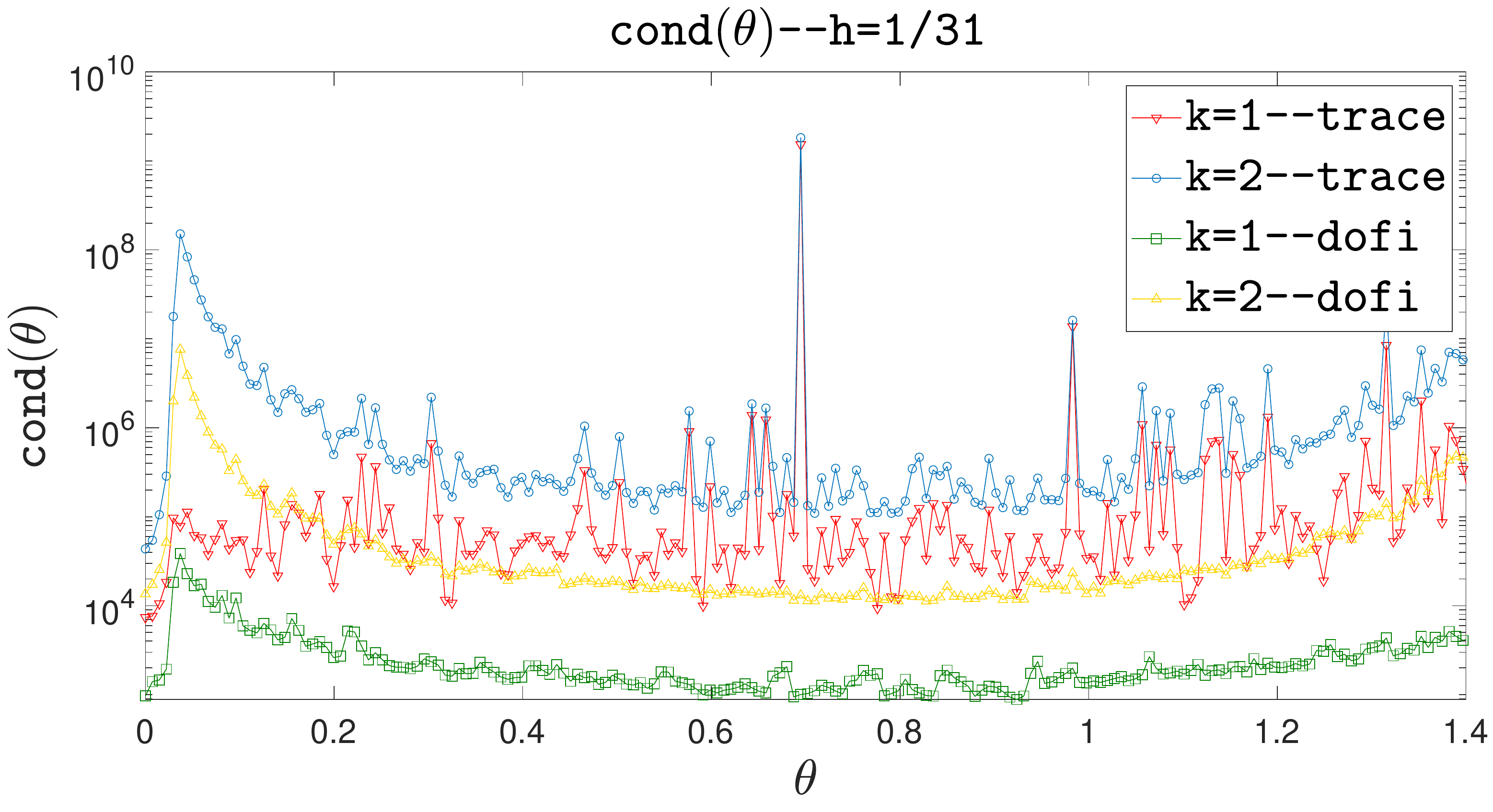}
\vspace{2ex}
\includegraphics[scale=0.19]{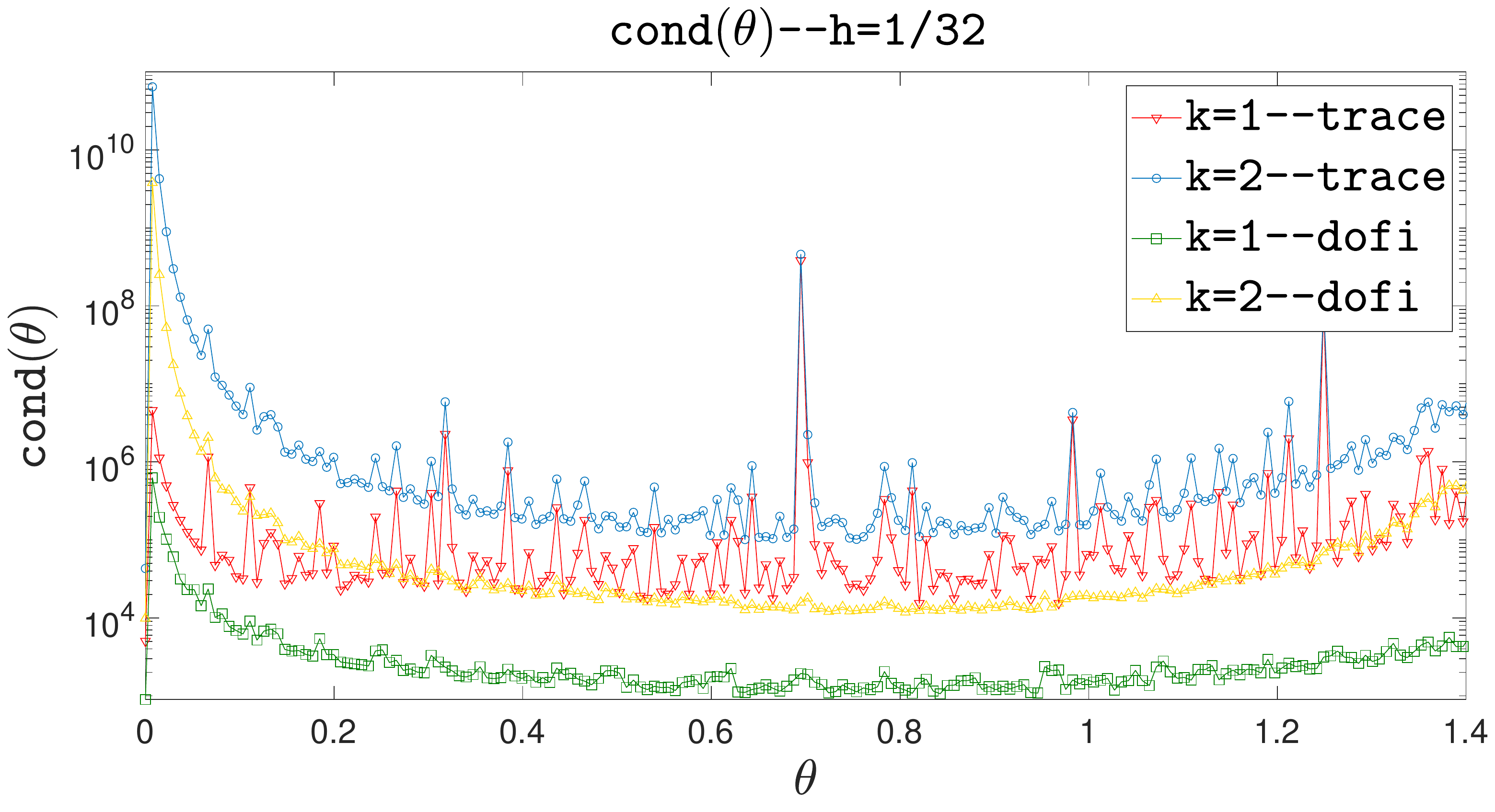}
\caption{{Test 1. 
Condition number with $1/h = 31$ (left) and $1/h = 32$ (right) for $k=1$, $2$.
The condition number suffers from the anisotropy of the elements (small angles $\theta$ and $1/h = 32$). The condition number for the \texttt{dofi-dofi} stabilization is more stable in comparison with the \texttt{trace} stabilization.}}
\label{fig:test_cond}
}
\end{figure}

At last, Table \ref{tab:test_infsup} shows the stability of the discrete inf-sup constant $\beta_h$ with respect to the anisotropy and different sizes of the elements, for both $k=1$ and $k=2$, and both the adopted stabilizations. 
For the computation of $\beta_h$ we use the algebraic argument in \cite{inf-sup-test}. 
We pick small angles $\theta$ in order to assess the performance of the scheme with respect to anisotropy.
Table \ref{tab:test_infsup} clearly indicates that the inf-sup constant $\beta_h$ is robust with respect to the anisotropy of the mesh elements.

\begin{table}[!h]
\centering
\small{
\begin{tabular}{lccccc}
\toprule
&  & \multicolumn{2}{c}{\texttt{trace}}
& \multicolumn{2}{c}{\texttt{dofi-dofi}}
\\
\midrule  
&  $\theta$  
&  $\texttt{k = 1}$   
&  $\texttt{k = 2}$  
&  $\texttt{k = 1}$   
&  $\texttt{k = 2}$       
\\
\midrule                            
\multirow{4}*{\texttt{h=1/15}}
&$\texttt{1e-8}$       
&$\texttt{1.75014e-01}$         & $\texttt{1.77600e-01}$
&$\texttt{2.21828e-01}$         & $\texttt{1.84007e-01}$             
\\
&$\texttt{1e-6}$          
&$\texttt{1.75014e-01}$         & $\texttt{1.77600e-01}$
&$\texttt{2.21828e-01}$         & $\texttt{1.84007e-01}$       
\\
&$\texttt{1e-4}$       
&$\texttt{1.75011e-01}$         & $\texttt{1.77601e-01}$
&$\texttt{2.21825e-01}$         & $\texttt{1.84008e-01}$ 
\\
&$\texttt{1e-2}$       
&$\texttt{1.74655e-01}$        & $\texttt{1.77662e-01}$
&$\texttt{2.21448e-01}$        & $\texttt{1.84085e-01}$       
\\
\midrule
\multirow{4}*{\texttt{h=1/16}}
&$\texttt{1e-8}$       
&$\texttt{1.86731e-01}$         & $\texttt{6.24530e-02}$
&$\texttt{2.24419e-01}$         & $\texttt{1.81699e-01}$       
\\
&$\texttt{1e-6}$          
&$\texttt{1.86731e-01}$         & $\texttt{5.63922e-02}$
&$\texttt{2.24419e-01}$         & $\texttt{1.82237e-01}$       
\\
&$\texttt{1e-4}$       
&$\texttt{1.86735e-01}$         & $\texttt{5.62713e-02}$
&$\texttt{2.24415e-01}$         & $\texttt{1.82234e-01}$ 
\\
&$\texttt{1e-2}$       
&$\texttt{1.86933e-01}$        & $\texttt{6.49363e-02}$
&$\texttt{2.24960e-01}$        & $\texttt{1.83015e-01}$       
\\
\bottomrule
\end{tabular}
}
\caption{Test 1.  Inf-sup constant $\beta_h$ for $k=1$, $2$. Even and odd case, \texttt{trace} amd \texttt{dofi-dofi} stabilizations.  The inf-sup constant is robust with respect to the anisotropy of the mesh elements.}
\label{tab:test_infsup}
\end{table}

\paragraph{Test 2: Validation of the nonlinear scheme for the Stokes equations.}
\label{test2bis}\textbf{}
In the present test we numerically explore the convergence of the VEM scheme for  the  ``benchmark problem'' described in the following, with the aim of validating  the proposed discretization scheme and the associated nonlinear algorithm.
We consider again the linear (Stokes) version of \eqref{eq:fsi_primale} with the data  described above. 
Since no exact solution is explicitly available, we build a reliable reference numerical solution as follows. The idea is to fix an angle $\theta^*$ of the leaflet, compute the corresponding torque $\tau(\theta^*)$ by a highly accurate numerical scheme, and find the value $\kappa_s^*$ of the spring elastic modulus by imposing that $\theta^*$ is the equilibrium position of the leaflet. To be precise, we choose the angle $\theta^* := \pi/6 - \texttt{0.01}$, since this angle yields a complex mesh configuration with small elements and edges (see Fig. \ref{fg:benchmark_meshes}), thereby representing a severe test for the robustness of the VEM technology. The torque $\tau(\theta^*)$ is expensively, yet accurately computed as in Test 1 by the Crouzeix-Raviart method on a fine triangular mesh of diameter $h=\texttt{0.01}$ in the domain $\Omega=\D \setminus \Gamma(\theta^*)$. Finally, $\kappa_s^* := \frac{\tau(\theta^*)}{\theta^*}$ is obtained by enforcing the balance condition $\kappa_s^*\theta^*=\tau(\theta^*)$.

The value $\theta^*$ is approximated by the VEM scheme \eqref{eq:fsi_vem} (without the convective term $c_h$), solving the resulting nonlinear equation $\kappa_s^*\theta=\tau_h(\theta)$ by the bisection algorithm discussed in Subsection~\ref{sub:bisection}; let $\theta_h^*$ denote the output of this procedure.

\begin{figure}[!b]
\center{
\begin{overpic}[scale=0.17]{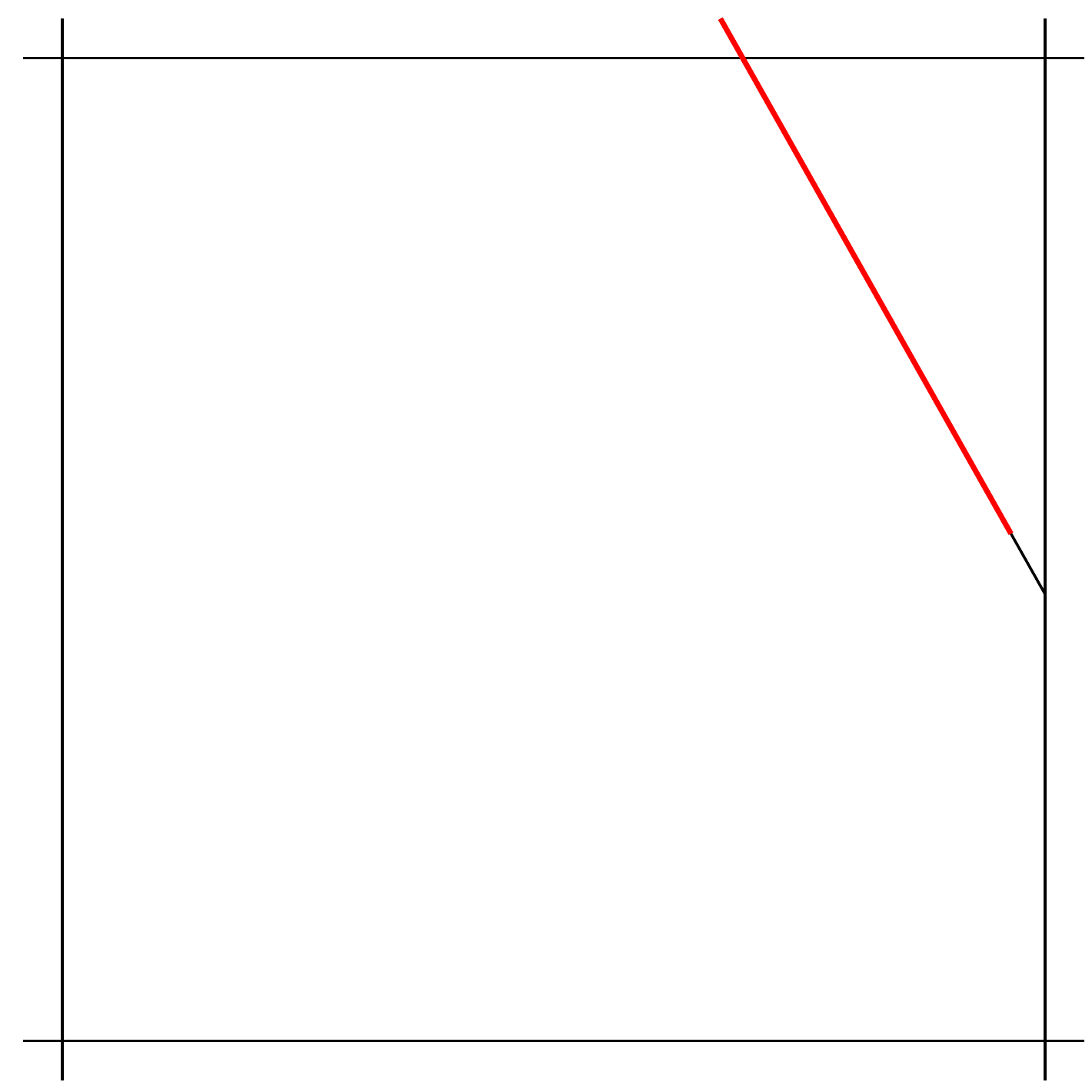}
\put (30, -10) {\texttt{case A}}
\end{overpic}
\qquad \qquad
\begin{overpic}[scale=0.17]{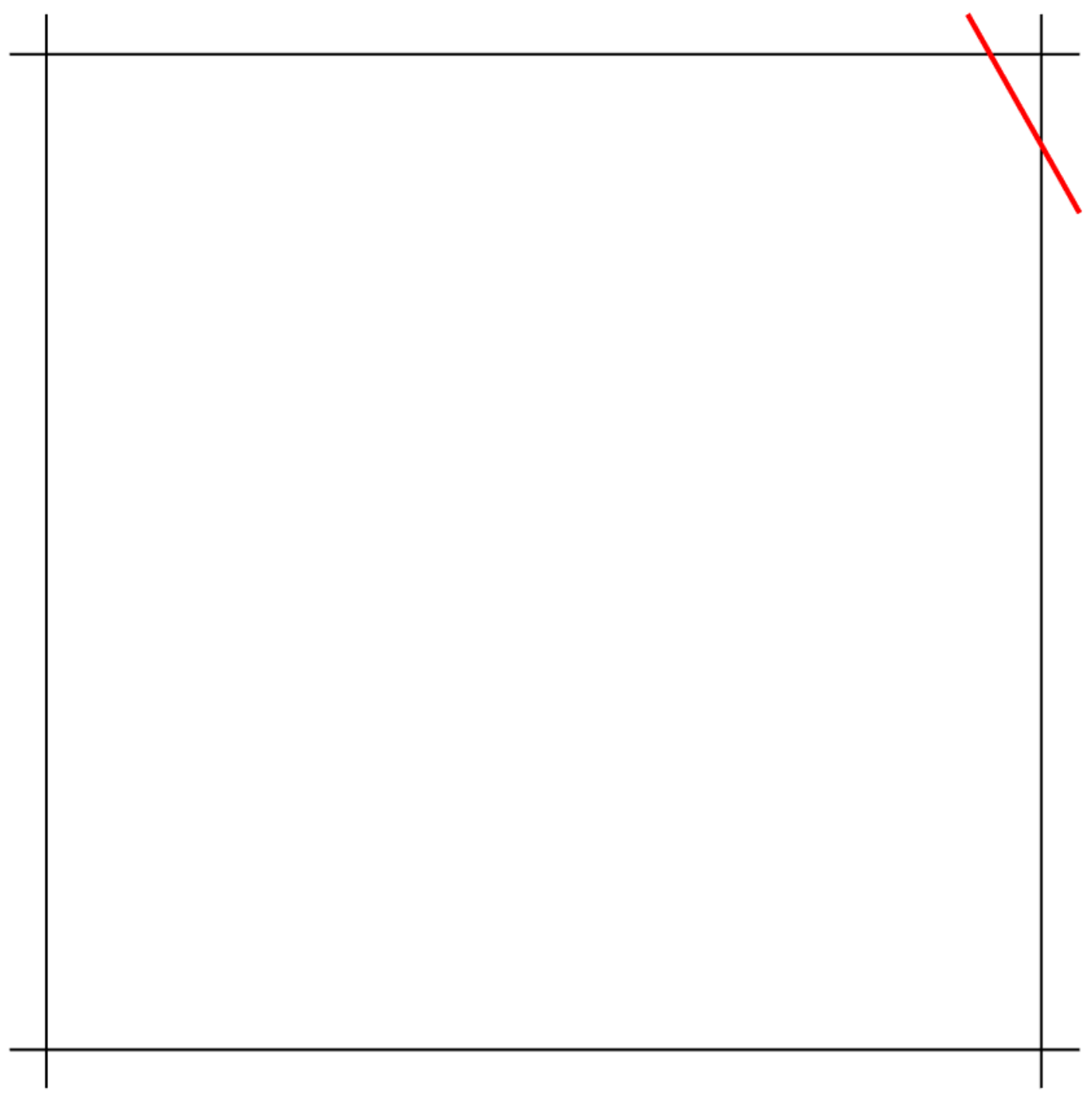}
\put (30, -10) {\texttt{case B}}
\end{overpic}
\qquad \qquad
\begin{overpic}[scale=0.17]{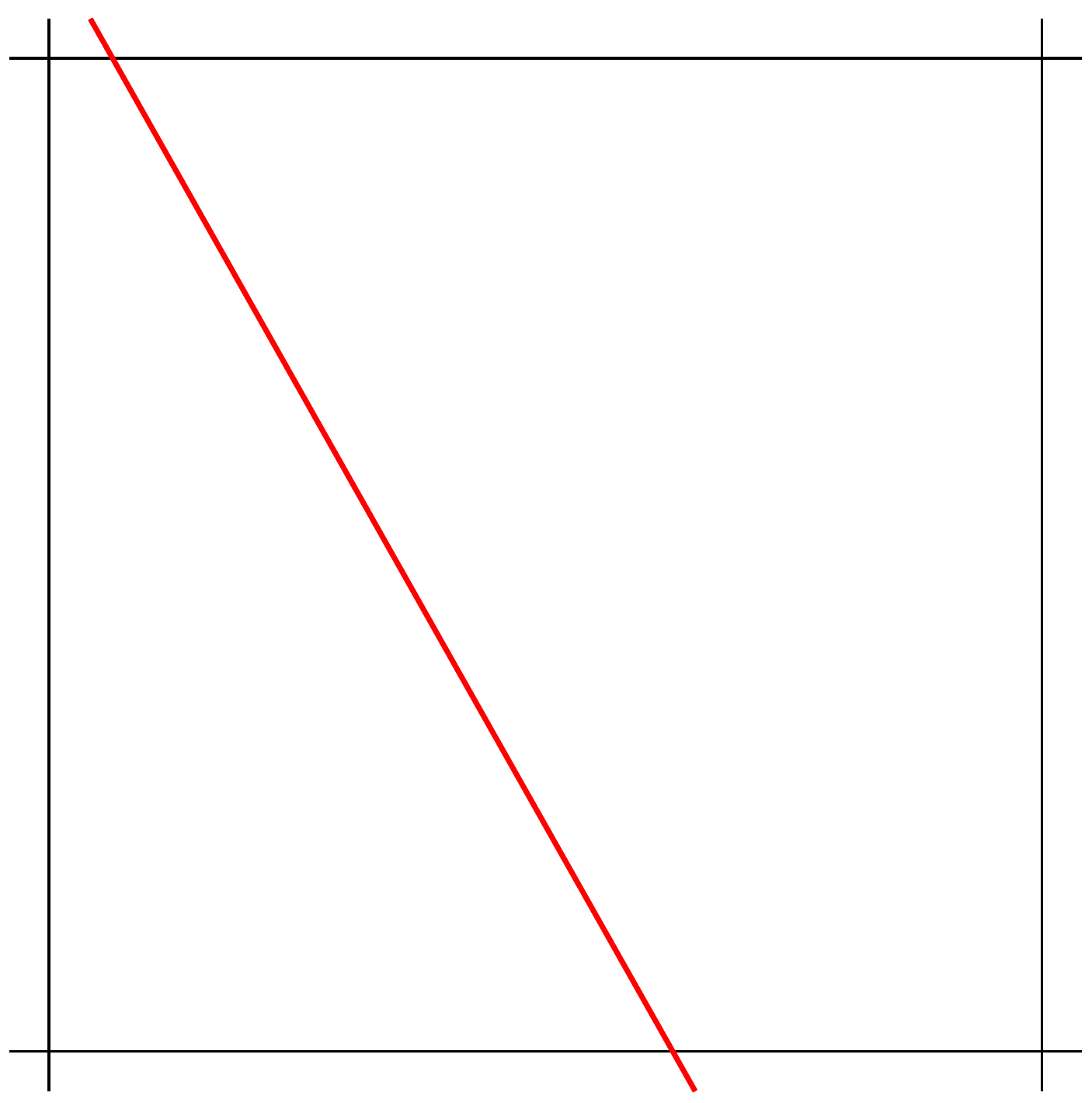}
\put (30, -10) {\texttt{case C}}
\end{overpic}
}
\caption{{Test 2.  Examples of mesh elements for $\theta=\pi/6-\texttt{1e-2}$. 
\texttt{case A}: small prolongation of the leaflet $\Gamma$ (even case).
\texttt{case B}: elements with different size.
\texttt{case C}: small edges.
}}
\label{fg:benchmark_meshes}
\end{figure}


\begin{table}[!h]
\centering
\small{
\begin{tabular}{lcccc}
\toprule
& 
\multicolumn{2}{c}{\texttt{trace}}
& 
\multicolumn{2}{c}{\texttt{dofi-dofi}}
\\
\midrule
   $\texttt{1/h}$   
&  $\texttt{k = 1}$   
&  $\texttt{k = 2}$      
&  $\texttt{k = 1}$   
&  $\texttt{k = 2}$     
\\
\midrule      
$\texttt{5}$       
&$\texttt{9.619392e-02}$         & $\texttt{5.263332e-02}$     
&$\texttt{4.155839e-02}$         & $\texttt{2.687218e-03}$ 
\\
$\texttt{9}$       
&$\texttt{6.693311e-02}$         & $\texttt{3.913659e-02}$          
&$\texttt{3.640182e-02}$         & $\texttt{1.051568e-02}$ 
\\
$\texttt{17}$       
&$\texttt{3.371117e-02}$         & $\texttt{1.796874e-02}$     
&$\texttt{1.337427e-02}$         & $\texttt{1.278502e-03}$     
\\
$\texttt{33}$       
&$\texttt{1.762575e-02}$         & $\texttt{9.453033e-03}$     
&$\texttt{6.364691e-03}$         & $\texttt{5.301562e-04}$   
\\
$\texttt{65}$       
&$\texttt{8.237521e-03}$         & $\texttt{5.364761e-03}$     
&$\texttt{2.894178e-03}$         & $\texttt{1.170867e-04}$        
\\
\midrule
$\texttt{4}$       
&$\texttt{1.506788e-01}$         & $\texttt{7.799135e-02}$     
&$\texttt{5.773616e-02}$         & $\texttt{1.766750e-02}$ 
\\
$\texttt{8}$       
&$\texttt{6.917897e-02}$         & $\texttt{4.419630e-02}$     
&$\texttt{2.872391e-02}$         & $\texttt{1.710762e-02}$ 
\\
$\texttt{16}$       
&$\texttt{3.555107e-02}$         & $\texttt{1.816131e-02}$     
&$\texttt{8.238265e-03}$         & $\texttt{3.096882e-03}$     
\\
$\texttt{32}$       
&$\texttt{1.826239e-02}$         & $\texttt{1.015554e-02}$     
&$\texttt{8.238265e-03}$         & $\texttt{1.808767e-03}$   
\\
$\texttt{64}$       
&$\texttt{1.000000e-02}$         & $\texttt{6.112618e-03}$     
&$\texttt{3.313301e-03}$         & $\texttt{1.449026e-03}$        
\\
\bottomrule
\end{tabular}
}
\caption{Test 2.  Error $|\theta^*-\theta_h^*|$ obtained $k=1$, $2$, with different values of the mesh size $h$ for the proposed benchmark problem; \texttt{trace} stabilization and \texttt{dofi-dofi} stabilization.}
\label{tab:btotal}
\end{table}

Table \ref{tab:btotal} reports the errors $|\theta^*-\theta_h^*|$ for different choices of the discretization parameter and for the two considered stabilizations. We observe that in both the even and odd cases the method converges to the exact solution. The convergence trend is more evident for the \texttt{trace} stabilization. However the \texttt{dofi-dofi} stabilization yields, at least for this test, better results.
We notice that, as expected, the method obtained with $k=2$ produces better performances in comparison with the $k=1$ scheme, nevertheless both schemes exhibit a linear rate of convergence. This is consistent with Theorem \ref{thm:finale} (see also Remark \ref{rem:eet}) and the low Sobolev regularity of the exact solution.

\medskip
\paragraph{Test 3: Performance w.r.t. $\theta$ and $h$ for the Navier-Stokes equations.}
\label{test2}
The aim of this test is to check the actual performance of the virtual element method for the full Navier-Stokes equations, using again the data given above and assuming as in Test 2 a linear law $\kappa(\theta)=\kappa_s\theta$  for the spring angular momentum. We vary $\kappa_s$ by several orders of magnitude, and we consider different refinements of the computational mesh.

Tables \ref{tabodd} and \ref{tabeven}  display the angular coordinates $\theta_h$ obtained by the virtual element discretization \eqref{eq:fsi_vem} and the bisection algorithm described in Subsection \ref{sub:bisection}, for odd and even values of $1/h$,  using both the \texttt{trace} and the \texttt{dofi-dofi} stabilizations.
We observe, as expected, that bigger rotation angles $\theta$ correspond to smaller values of $\kappa$, and larger values of the spring elastic modulus generate less pronounced displacements of the leaflet.
We do not have a reference solution for the present test, but we can appreciate that, for each choice of $\kappa_s$, the values of $\theta_h$ for different $h$ and $k=1,2$ are in mutual agreement  and seem to converge to a common  value.

\begin{table}[!h]

\centering
\small{
\begin{tabular}{ll*{6}{c}}
\toprule
& &  \multicolumn{3}{c}{\texttt{trace}} & \multicolumn{3}{c}{\texttt{dofi-dofi}}
\\
\midrule             
&   $\texttt{1/h}$  
&  $\kappa_s=\texttt{0.01}$   
&  $\kappa_s=\texttt{1.00}$       
&  $\kappa_s=\texttt{100}$   
&  $\kappa_s=\texttt{0.01}$    
&  $\kappa_s=\texttt{1.00}$   
&  $\kappa_s=\texttt{100}$      \\
\midrule
\multirow{5}*{$\texttt{k=1}$}                             
&$\texttt{5}$       
&$\texttt{1.38151}$         &$\texttt{0.27635}$         & $\texttt{0.00363}$
&$\texttt{1.31711}$         &$\texttt{0.15212}$         & $\texttt{0.00156}$          
\\
&$\texttt{9}$     
&$\texttt{1.31213}$         &$\texttt{0.22939}$         & $\texttt{0.00280}$
&$\texttt{1.28624}$         &$\texttt{0.16258}$         & $\texttt{0.00175}$          
\\
&$\texttt{17}$          
&$\texttt{1.29514}$         &$\texttt{0.20351}$         & $\texttt{0.00238}$
&$\texttt{1.29035}$         &$\texttt{0.17198}$         & $\texttt{0.00187}$         
\\
&$\texttt{33}$       
&$\texttt{1.29036}$         &$\texttt{0.18982}$         & $\texttt{0.00219}$
&$\texttt{1.29184}$         &$\texttt{0.17562}$         & $\texttt{0.00194}$        
\\
&$\texttt{65}$       
&$\texttt{1.28991}$         &$\texttt{0.18404}$         & $\texttt{0.00209}$
&$\texttt{1.29423}$         &$\texttt{0.17673}$         & $\texttt{0.00197}$
\\
\midrule
\multirow{5}*{$\texttt{k=2}$}                             
&$\texttt{5}$       
&$\texttt{1.28454}$         &$\texttt{0.23737}$         & $\texttt{0.00266}$          
&$\texttt{1.29268}$         &$\texttt{0.16902}$         & $\texttt{0.00185}$
\\
&$\texttt{9}$     
&$\texttt{1.28992}$         &$\texttt{0.20421}$         & $\texttt{0.00232}$
&$\texttt{1.29927}$         &$\texttt{0.17469}$         & $\texttt{0.00192}$          
\\
&$\texttt{17}$          
&$\texttt{1.28888}$         &$\texttt{0.19438}$         & $\texttt{0.00216}$
&$\texttt{1.29863}$         &$\texttt{0.17336}$         & $\texttt{0.00196}$          
\\
&$\texttt{33}$       
&$\texttt{1.28992}$         &$\texttt{0.18592}$         & $\texttt{0.00208}$          
&$\texttt{1.29593}$         &$\texttt{0.17845}$         & $\texttt{0.00198}$
\\
&$\texttt{65}$       
&$\texttt{1.29066}$         &$\texttt{0.18147}$         & $\texttt{0.00204}$          
&$\texttt{1.29497}$         &$\texttt{0.17804}$         & $\texttt{0.00199}$
\\
\bottomrule
\end{tabular}
}
\caption{Test 3.  Angular coordinates $\theta_h$ of the leaflet for degree of approximation $k=1$, $2$ with different values of the mesh size $h$ and different values of the spring torsional elastic modulus $\kappa_s$. \texttt{trace} and \texttt{dofi-dofi} stabilizations. Odd case.}
\label{tabodd}
\end{table}

\begin{table}[!h]
\centering
\small{
\begin{tabular}{ll*{6}{c}}
\toprule
& &  \multicolumn{3}{c}{\texttt{trace}} & \multicolumn{3}{c}{\texttt{dofi-dofi}}
\\
\midrule             
&   $\texttt{1/h}$  
&  $\kappa_s = \texttt{0.01}$   
&  $\kappa_s = \texttt{1.00}$       
&  $\kappa_s = \texttt{100}$   
&  $\kappa_s = \texttt{0.01}$    
&  $\kappa_s = \texttt{1.00}$   
&  $\kappa_s = \texttt{100}$      \\
\midrule
\multirow{5}*{$\texttt{k=1}$}                             
&$\texttt{4}$       
&$\texttt{1.40724}$         &$\texttt{0.28031}$         & $\texttt{0.00398}$
&$\texttt{1.33317}$         &$\texttt{0.13095}$         & $\texttt{0.00167}$          
\\
&$\texttt{8}$     
&$\texttt{1.34711}$         &$\texttt{0.23448}$         & $\texttt{0.00277}$
&$\texttt{1.28593}$         &$\texttt{0.15314}$         & $\texttt{0.00185}$          
\\
&$\texttt{16}$          
&$\texttt{1.29678}$         &$\texttt{0.20533}$         & $\texttt{0.00233}$
&$\texttt{1.29009}$         &$\texttt{0.16813}$         & $\texttt{0.00193}$         
\\
&$\texttt{32}$       
&$\texttt{1.29056}$         &$\texttt{0.19373}$         & $\texttt{0.00215}$
&$\texttt{1.29179}$         &$\texttt{0.17434}$         & $\texttt{0.00197}$        
\\
&$\texttt{64}$       
&$\texttt{1.28945}$         &$\texttt{0.18642}$         & $\texttt{0.00208}$
&$\texttt{1.29534}$         &$\texttt{0.17652}$         & $\texttt{0.00198}$
\\
\midrule
\multirow{5}*{$\texttt{k=2}$}                             
&$\texttt{4}$       
&$\texttt{1.28581}$         &$\texttt{0.23224}$         & $\texttt{0.00273}$          
&$\texttt{1.29015}$         &$\texttt{0.16291}$         & $\texttt{0.00205}$
\\
&$\texttt{8}$     
&$\texttt{1.29103}$         &$\texttt{0.20582}$         & $\texttt{0.00228}$
&$\texttt{1.29583}$         &$\texttt{0.17134}$         & $\texttt{0.00203}$          
\\
&$\texttt{16}$          
&$\texttt{1.28910}$         &$\texttt{0.19288}$         & $\texttt{0.00213}$
&$\texttt{1.29706}$         &$\texttt{0.17447}$         & $\texttt{0.00201}$          
\\
&$\texttt{32}$       
&$\texttt{1.28998}$         &$\texttt{0.18740}$         & $\texttt{0.00207}$          
&$\texttt{1.29590}$         &$\texttt{0.17619}$         & $\texttt{0.00200}$
\\
&$\texttt{64}$       
&$\texttt{1.29029}$         &$\texttt{0.18222}$         & $\texttt{0.00203}$          
&$\texttt{1.29492}$         &$\texttt{0.17800}$         & $\texttt{0.00200}$
\\
\bottomrule
\end{tabular}
}
\caption{Test 3. Angular coordinates $\theta_h$ of the leaflet for degree of approximation $k=1$, $2$ with different values of the mesh size $h$ and different values of the spring torsional elastic modulus $\kappa_s$. \texttt{trace} and \texttt{dofi-dofi} stabilizations. Even case.}
\label{tabeven}
\end{table}

We notice that the cutting procedure previously described may generate strongly anisotropic elements, particularly in the even case. For instance in the last case with $\kappa =  \texttt{100}$ and $k=\texttt{2}$ we get a solution $\theta_h = \texttt{0.002000}$, for the last refinement. Nevertheless we notice that the results in the given Tables demonstrate the robustness of Virtual Element technology in this respect.
We also observe that, at least for the proposed test, the \texttt{trace} stabilization yields a monotone trend of convergence to the solution. 
 
Finally, in Figs \ref{fig:test1_a}, \ref{fig:test1_b} and \ref{fig:test1_c} 
we show the plots of the  numerical velocity field and pressure field for
$\kappa_s = \texttt{0.01}, \texttt{0.1}, \texttt{1}$ obtained for the even case $1/h = 17$ with the first-order VEM scheme with \texttt{dofi-dofi} stabilization.

\begin{figure}[!h]
\center{
\includegraphics[scale=0.15]{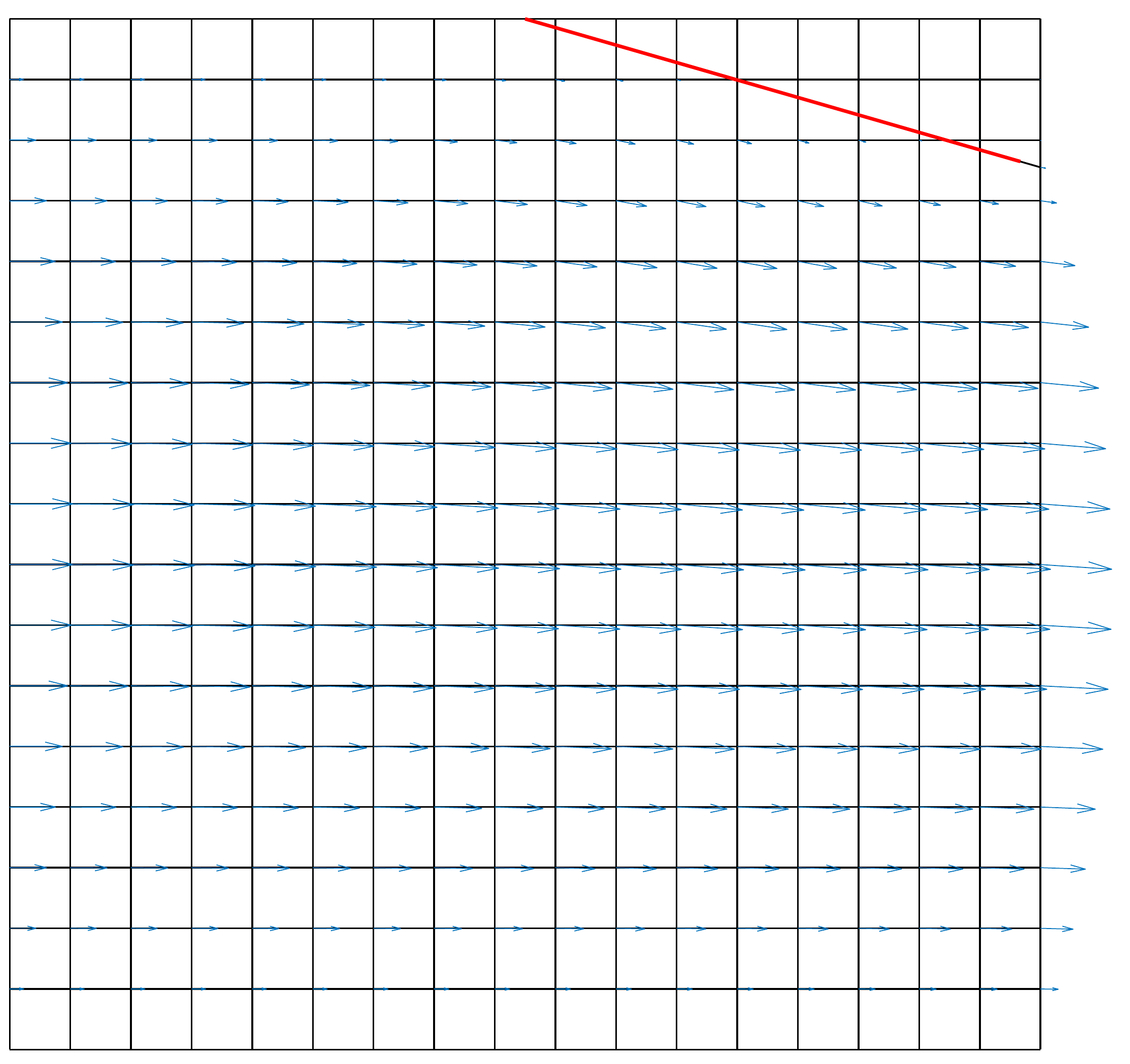}
\quad
\includegraphics[scale=0.12]{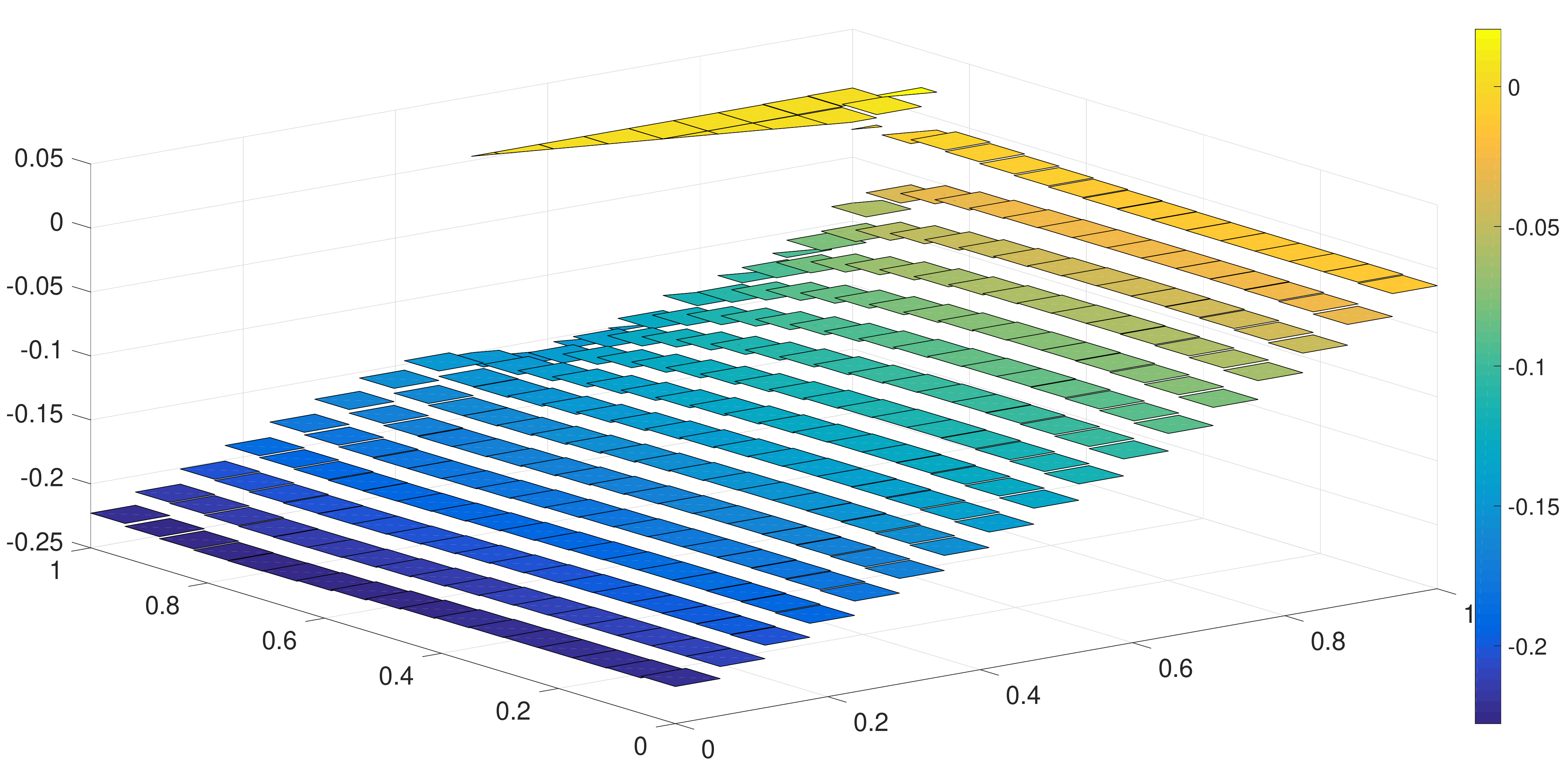}
\caption{{Test 3.  Velocity and pressure for the mesh  size $1/h=17$ and $\kappa_s =  \texttt{0.01}$ with $k=1$.}}
\label{fig:test1_a}
}
\end{figure}
\begin{figure}[!h]
\center{
\includegraphics[scale=0.15]{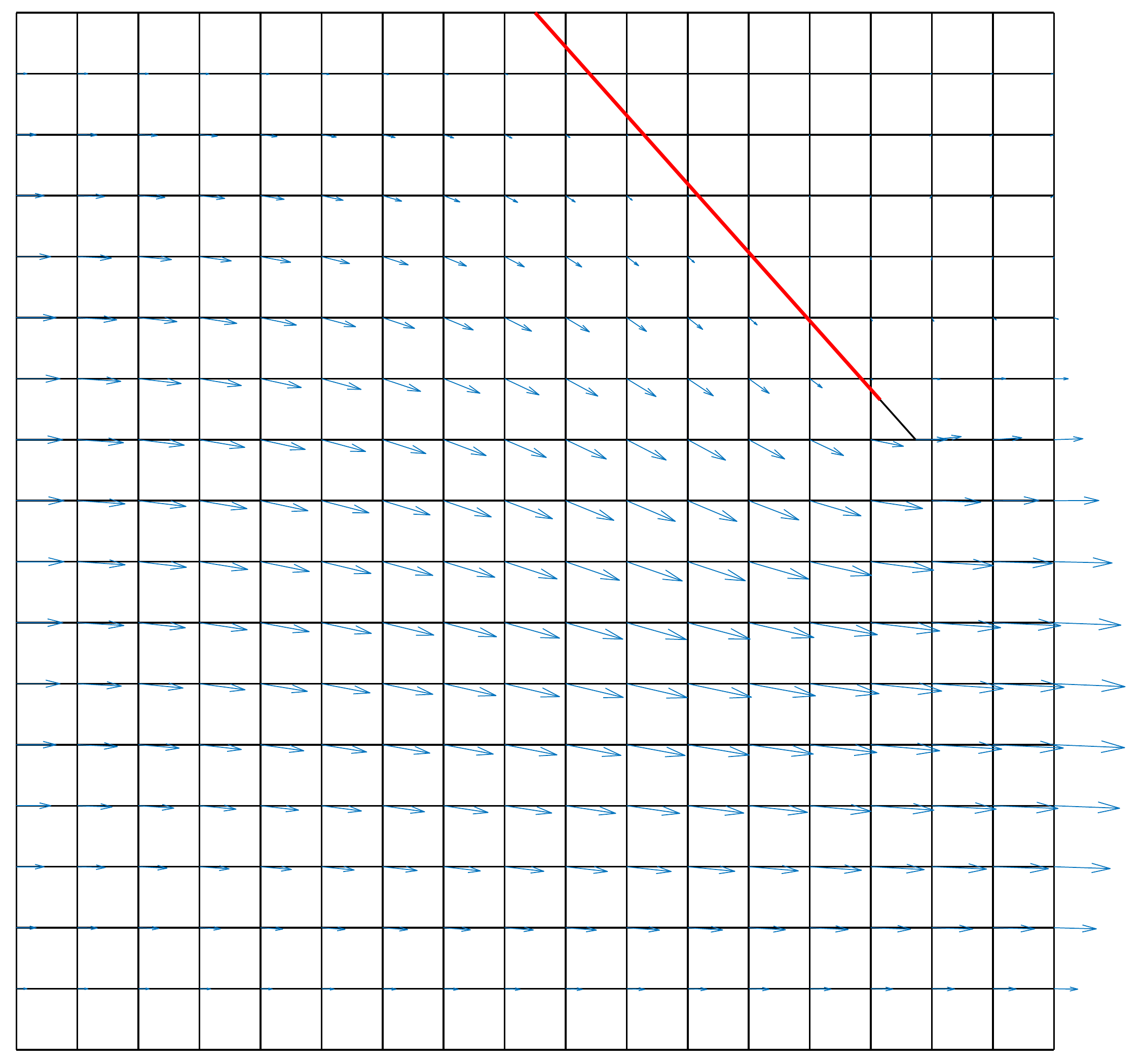}
\quad
\includegraphics[scale=0.12]{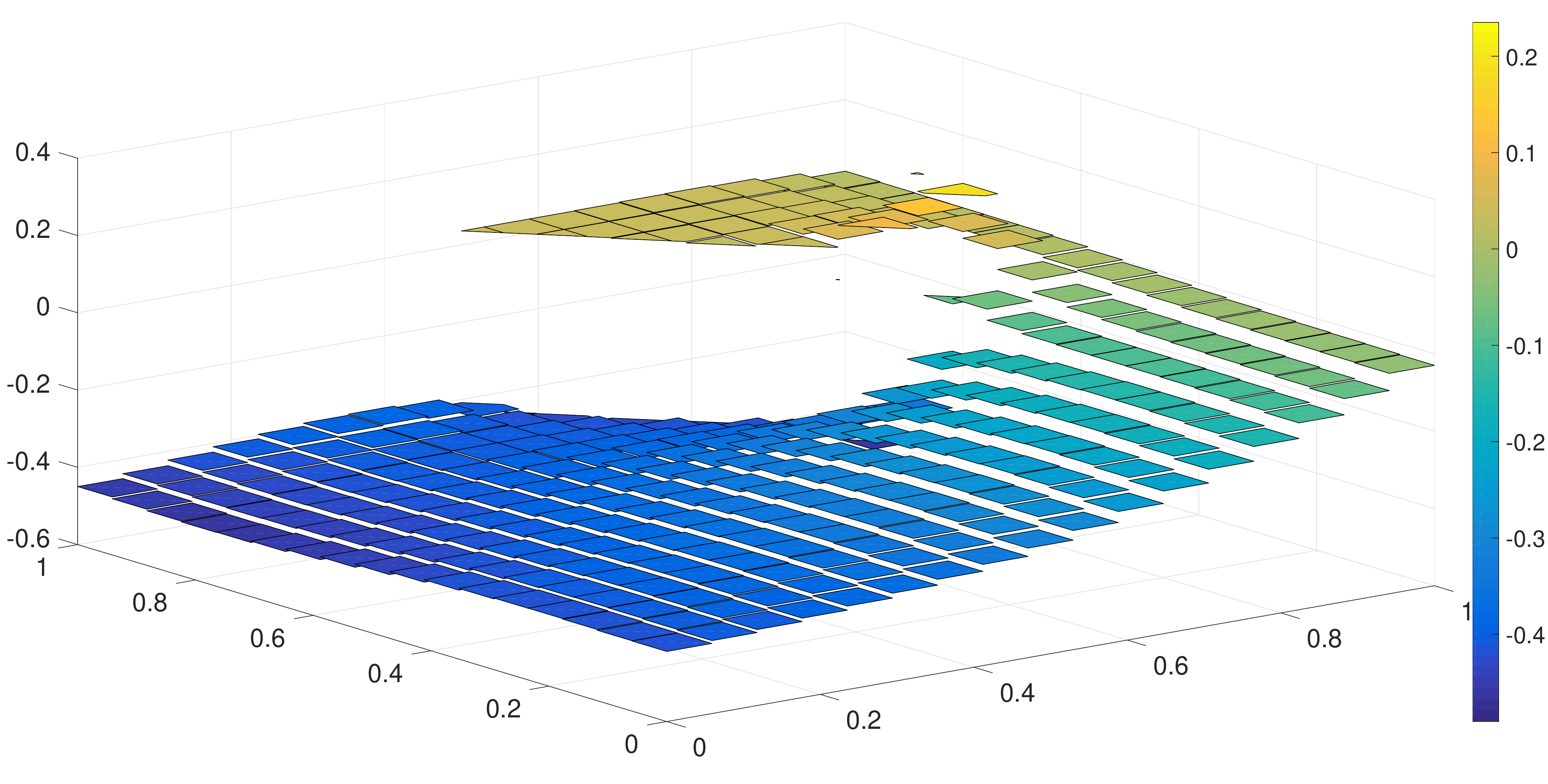}
\caption{{Test 3.  Velocity and pressure for the mesh  size $1/h=17$ and $\kappa_s =  \texttt{0.1}$ with $k=1$.}}
\label{fig:test1_b}
}
\end{figure}
\begin{figure}[!h]
\center{
\includegraphics[scale=0.15]{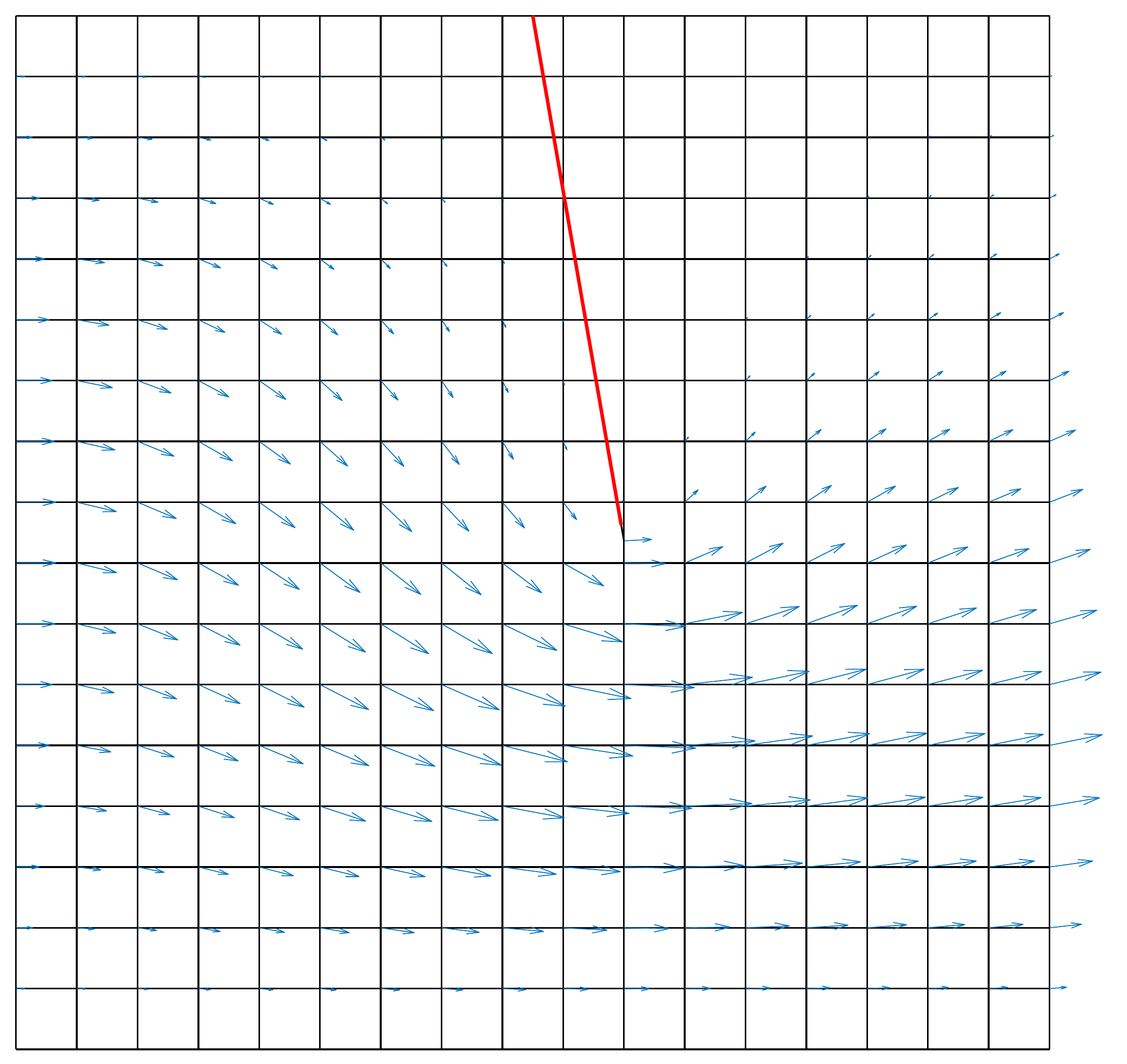}
\quad
\includegraphics[scale=0.12]{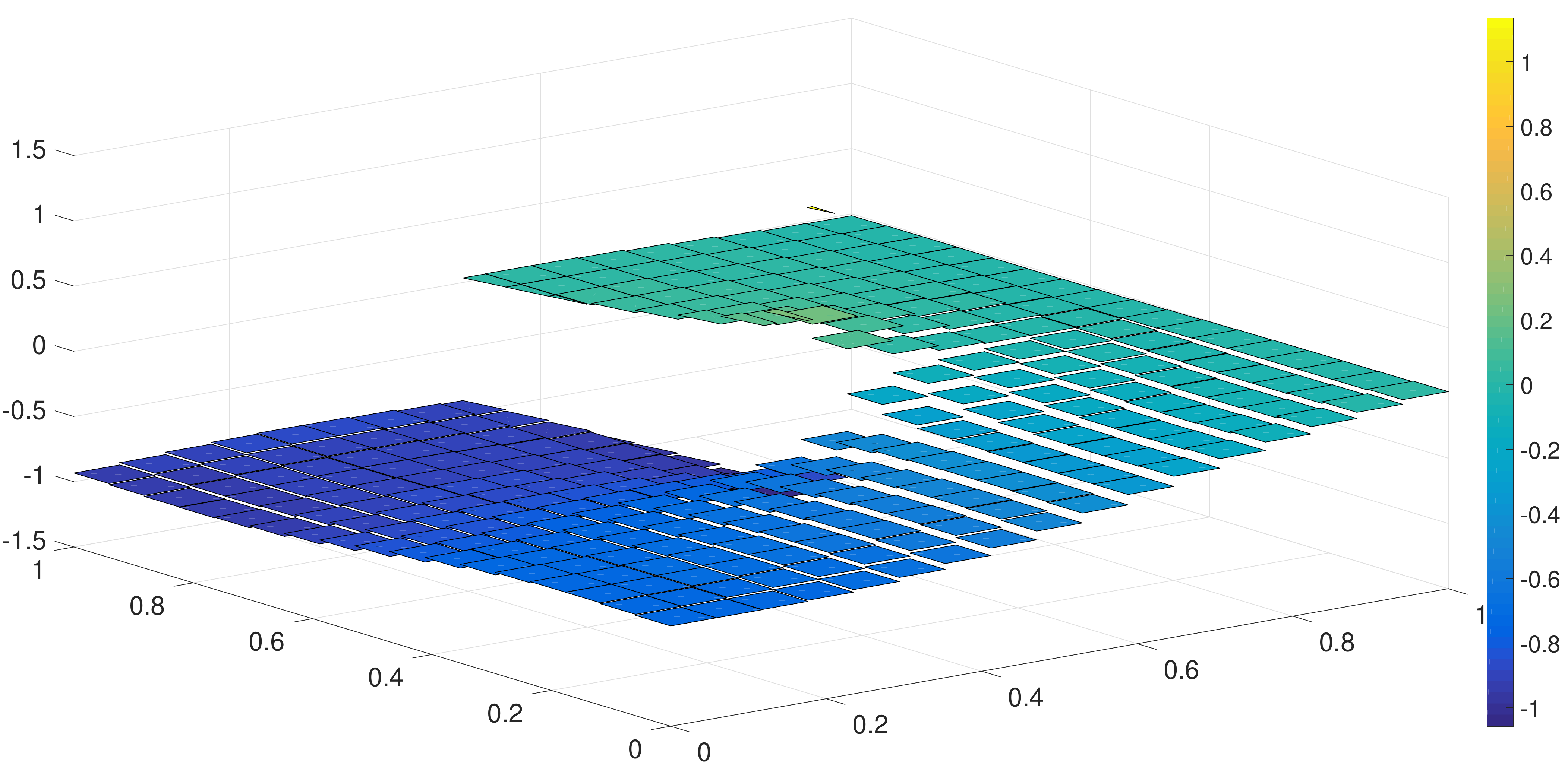}
\caption{{Test 3.  Velocity and pressure for the mesh  size $1/h=17$ and $\kappa_s =  \texttt{1}$ with $k=1$.}}
\label{fig:test1_c}
}
\end{figure}

%


\section{Conclusions}
\label{sec:conclusions}

We have investigated the equilibrium of a hinged rigid leaflet with an attached rotational spring, immersed in a stationary incompressible fluid within a rigid channel; we have assumed invariance in the transversal direction, leading to a two-dimensional geometry. Any equilibrium position corresponds to a balance between the spring angular momentum and the torque exerted by the fluid on the leaflet.
Our problem essentially depends upon two parameters, namely the angle $\theta$ of rotation of the leaflet around the hinge, and the thickness $\epsilon$ of the leaflet, which is allowed to take the value 0, thus reducing the leaflet to a segment.

The results in this paper concern the mathematical properties of the model on the one hand, and its numerical treatment on the other hand. Both theory and numerics rely on a variational formulation of the equilibrium problem, that we have derived first. Along the same lines, we have expressed torque in terms of bulk integrals involving an adjoint problem. 

Next, having in mind to assess the existence of equilibria by topological arguments, we have proven that the torque functional is continuous with respect to the angle $\theta$, in the whole interval of definition; this key result holds for both $\epsilon >0$ and $\epsilon=0$. In the former case (the `fat' leaflet), we have even established the differentiability of torque, by explicitly computing the shape derivative of the functional with respect to a rigid rotation, and showing its boundedness; this is a non-standard task, as we admit the presence of corners in the leaflet. These arguments do not extend to the case $\epsilon=0$ (the `thin' leaflet), although we are inclined to conjecture that differentiability of torque holds as well, and might be proven by different techniques. Nonetheless, we have established the continuity of the torque functional for the thin leaflet by relying on the uniform convergence of the continuous torque functionals for fat leaflets, as their thickness tends to 0.
With these results at hand, we have identified sufficient conditions on the spring angular momentum for the existence (and uniqueness) of equilibrium positions. 

On the numerical side, we have proposed a family of Galerkin {discretizations based on the Virtual Element Method (VEM) for the Stokes equations; the schemes differ in} the choice of the polynomial degree and the definition of the stabilization terms.  Our idea has been to exploit the capability of the VEM to handle arbitrary polygonal elements seamlessly, since elements of this type are precisely created when a thin leaflet cuts a background uniform grid of quadrilaterals. This feature is quite relevant for the design of an efficient computational method, as the search for an equilibrium requires to evaluate the torque for many different positions of the leaflet. 

We have derived quasi-optimal error estimates for the discrete torque functional,  in which the rate of decay is twice the one of the approximation error for the solution, and we have proposed a bisection algorithm for solving the discrete nonlinear equation. Grounded on these results, we have performed an extensive and detailed testing of our numerical methods. First of all, we have studied the discrete torque functional $\tau_h$ as a function of the angle $\theta$ for different discretization parameters (mesh size, polynomial degree, stabilization choice), in order to investigate the robustness of the scheme to the abrupt topological mesh changes that may happen at certain critical angles {due to the leaflet prolongation.} It turns out that the effect of such topological changes is more pronounced for the so-called  \texttt{dofi-dofi} stabilization form than for the  \texttt{trace} stabilization form (the former is, however, generally more accurate than the latter). Increasing the degree and/or refining the mesh, the jumps and bumps that appear in the graph of $\tau_h(\theta)$ at the critical angles are significantly reduced; for a fine mesh, as one would expect to use in applications, such features almost disappear.
We studied also numericaly the convergence of discrete equilibrium point to the exact one, for an ad-hoc problem with known equilibrium position. The {experimental rates are consistent with theory given} the irregular nature of the solution.
Finally, we illustrated the effect of degenerate elements in the inf-sup constant and conditioning of the system for a wide range of angles $\theta$. Geometric degeneracy is usually associated with manageable spikes in both quantities, with a better behaviour of the  \texttt{dofi-dofi} stabilization in terms of condition number. 

From the practical perspective, we conclude that, although there is some influence of the mesh quality on the results, the scheme is sufficiently robust and reliable. Considering the simplicity, and thus the efficiency, of the mesh cutting procedure when compared with other techniques, we believe our approach is viable. The extension to more complex problems will be the topic of future research.
 

\section*{Acknowledgments}
The authors are indebted to R.G. Dur\'an for bringing up [\cite{Duran:12}] to their attention.
LBdV and GV  were partially supported by the European Research Council through the H2020 Consolidator Grant (grant no. 681162) CAVE - Challenges and Advancements in Virtual Elements. 
LBdV was partially supported by the italian PRIN 2017 grant ``Virtual Element Methods: Analysis and Applications''.
CC carried out this work within the MIUR ``Progetto di Eccellenza 2018-2022'' 
(CUP: E11G18000350001). 
LBdV, CC and GV are members of the INdAM research group GNCS.
RHN was partially supported by NSF grants DMS-1411808 and DMS-1908267.
These supports are gratefully acknowledged.

\bibliographystyle{plain}
\bibliography{biblio}

\end{document}